\documentclass[11pt]{amsart}
\usepackage[margin=1in]{geometry}
\usepackage{latexsym}
\usepackage{amsfonts}
\usepackage{amsmath}
\usepackage{amssymb}
\usepackage{amsthm}
\usepackage{enumerate}
\setlength{\parskip}{1em}
\usepackage[hang,flushmargin]{footmisc}
\usepackage{caption}
\usepackage{tabu}
\usepackage{mathrsfs}
\usepackage{amsaddr}
\usepackage{subfig}
\usepackage{makecell}

\newcommand{\Av}{\operatorname{Av}}

\usepackage{graphicx}
\usepackage{epstopdf}
\usepackage{epsfig}
\usepackage{caption}
 
\usepackage{bm} 
\usepackage{cite}

\makeatletter
\newtheorem*{rep@theorem}{\rep@title}
\newcommand{\newreptheorem}[2]{%
\newenvironment{rep#1}[1]{%
 \def\rep@title{#2 \ref{##1}}%
 \begin{rep@theorem}}%
 {\end{rep@theorem}}}
\makeatother

\newtheorem{theorem}{Theorem}[section]
\newreptheorem{theorem}{Conjecture}
\newtheorem{lemma}{Lemma}[section]
\newtheorem{proposition}{Proposition}[section]
\newtheorem{corollary}{Corollary}[section]

\theoremstyle{definition}
\newtheorem{definition}{Definition}[section]
\newtheorem{remark}{Remark}[section]

\DeclareMathOperator{\DL}{\Lambda\hspace{-.095cm}\Lambda}
\DeclareMathOperator{\swd}{swd}
\DeclareMathOperator{\swu}{swu}
\DeclareMathOperator{\swl}{swl}
\DeclareMathOperator{\swr}{swr}
\DeclareMathOperator{\lon}{long}
\DeclareMathOperator{\NC}{NC}
\DeclareMathOperator{\Int}{Int}

\DeclareMathOperator{\rot}{rot}
\DeclareMathOperator{\rev}{rev}
\DeclareMathOperator{\des}{des}
\DeclareMathOperator{\Des}{Des}
\DeclareMathOperator{\de}{def}

\DeclareMathOperator{\DB}{DB}

\DeclareMathOperator{\spli}{split}
\DeclareMathOperator{\PC}{PC}

\begin{document}
\title{Catalan Intervals and Uniquely Sorted Permutations}
\author{Colin Defant}
\address{Princeton University \\ Fine Hall, 304 Washington Rd. \\ Princeton, NJ 08544}
\email{cdefant@princeton.edu}

\begin{abstract}
For each positive integer $k$, we consider five well-studied posets defined on the set of Dyck paths of semilength $k$. We prove that uniquely sorted permutations avoiding various patterns are equinumerous with intervals in these posets. While most of our proofs are bijective, some use generating trees and generating functions. We end with several conjectures.
\end{abstract}

\maketitle

\bigskip

\section{Introduction}

A \emph{Dyck path of semilength} $k$ is a lattice path in the plane consisting of $k$ $(1,1)$ steps (also called \emph{up steps}) and $k$ $(1,-1)$ steps (also called \emph{down steps}) that starts at the origin and never passes below the horizontal axis. Letting $U$ and $D$ denote up steps and down steps, respectively, we can view a Dyck path of semilength $k$ as a word over the alphabet $\{U,D\}$ that contains $k$ copies of each letter and has the property that every prefix has at least as many $U$'s as it has $D$'s. The number of such paths is the $k^\text{th}$ Catalan number $C_k=\frac{1}{k+1}{2k\choose k}$; this is just one of the overwhelmingly abundant incarnations of these numbers. 
\begin{figure}[h]
\begin{center}
\includegraphics[width=.25\linewidth]{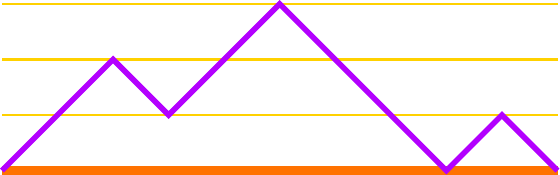}
\caption{The Dyck path $UUDUUDDDUD$ of semilength $5$.}
\label{Fig1}
\end{center}  
\end{figure}

Let ${\bf D}_k$ be the set of Dyck paths of semilength $k$. We obtain a natural partial order $\leq_S$ on ${\bf D}_k$ by declaring that $\Lambda\leq_S\Lambda'$ if $\Lambda$ lies weakly below $\Lambda'$. Alternatively, we have $\Lambda_1\cdots\Lambda_{2k}\leq_S\Lambda_1'\cdots\Lambda_{2k}'$ if and only if the number of $U$'s in $\Lambda_1\cdots\Lambda_i$ is at most the number of $U$'s in $\Lambda_1'\cdots\Lambda_i'$ for every $i\in\{1,\ldots,2k\}$. The poset $({\bf D}_k,\leq_S)$ turns out to be a distributive lattice; it is known as the $k^\text{th}$ \emph{Stanley lattice} and is denoted by $\mathcal L_k^S$. See \cite{Bernardi, Ferrari1, Ferrari2, Ferrari3} for more information about these fascinating lattices. The upper left image in Figure \ref{Fig2} shows the Hasse diagram of $\mathcal L_3^S$. 

\begin{figure}[h]
\begin{center}
\includegraphics[width=.86\linewidth]{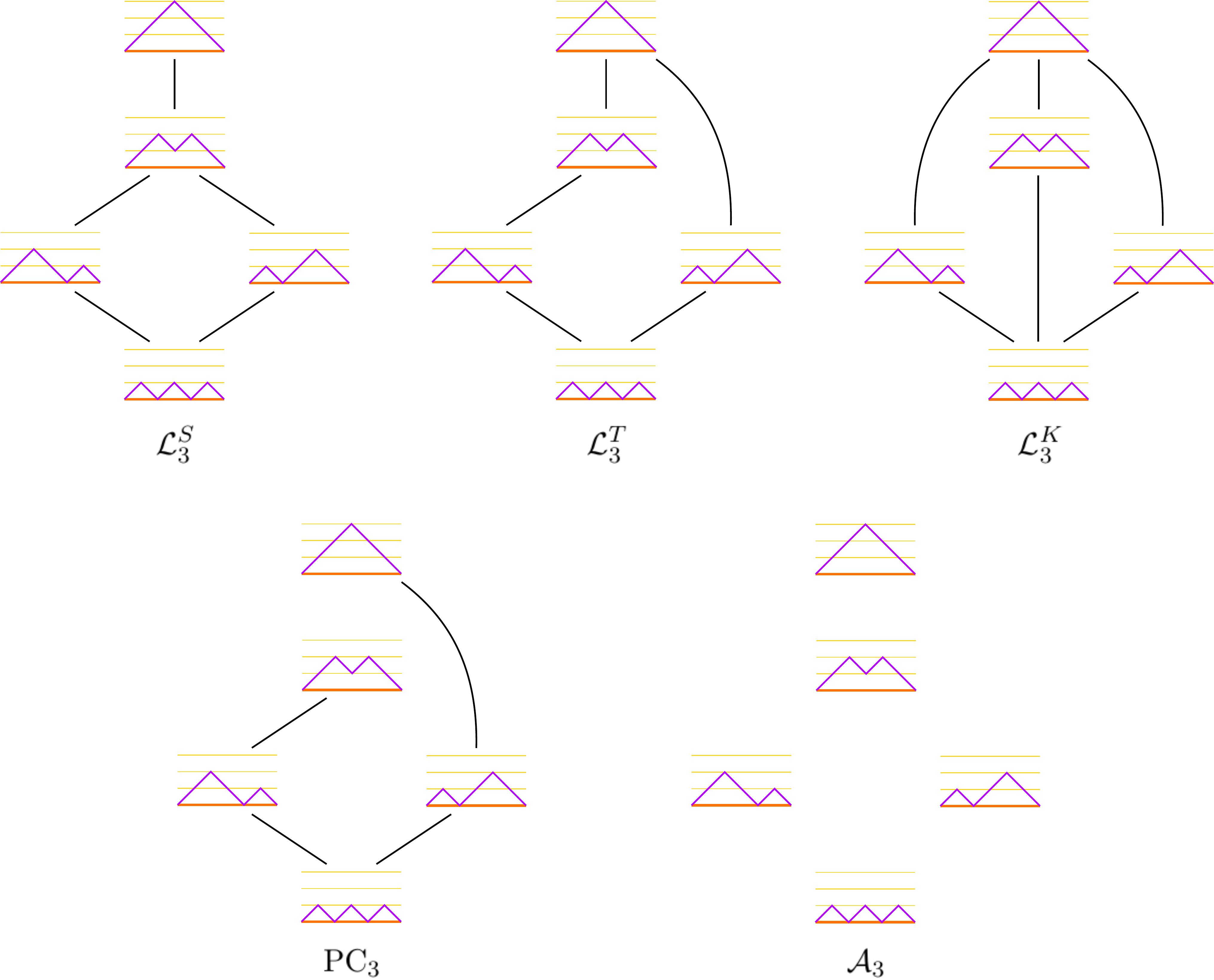}
\caption{The Hasse diagrams of our Catalan posets for $k=3$.}
\label{Fig2}
\end{center}  
\end{figure}

The $k^\text{th}$ \emph{Tamari lattice} is often defined on the set of binary plane trees with $k$ vertices. However, there are several equivalent definitions that allow one to define isomorphic lattices on other sets of objects counted by the $k^\text{th}$ Catalan number. Because the Tamari lattices are so multifaceted, they have been extensively studied in combinatorics and other areas of mathematics \cite{Chapoton, Early, Geyer, Huang, Tamari}. In particular, the Hasse diagrams of Tamari lattices arise as the $1$-skeleta of associahedra \cite{Loday}. If $\leq_1$ and $\leq _2$ are two partial orders on the same set $X$, then we say the poset $(X,\leq_1)$ is an \emph{extension} of the poset $(X,\leq_2)$ if $x\leq_2 y$ implies $x\leq_1 y$ for all $x,y\in X$. Each of the articles \cite{Bernardi,BousquetTamari} described how to define $\mathcal L_k^T$, the $k^\text{th}$ Tamari lattice, so that its underlying set is ${\bf D}_k$; in \cite{Bernardi}, Bernardi and Bonichon proved that the Stanley lattice $\mathcal L_k^S$ is an extension of $\mathcal L_k^T$. We define $\mathcal L_k^T$ in Section \ref{Sec:LatticeBack}. 

In a now-classical paper, Kreweras \cite{Kreweras} investigated the poset $\NC_k$ of all noncrossing partitions of the set $[k]:=\{1,\ldots,k\}$ ordered by refinement, showing, in particular, that this poset is a lattice. It is difficult to overstate the importance and ubiquity of noncrossing partitions and these lattices in mathematics \cite{Adin, Knuth2, McCammond, Simion, Speicher, Stanley}. Using a bijection between Dyck paths and noncrossing partitions, Bernardi and Bonichon \cite{Bernardi} defined an isomorphic copy of $\NC_k$, denoted $\mathcal L_k^K$, so that its underlying set is ${\bf D}_k$. They also showed that $\mathcal L_k^T$ is an extension of $\mathcal L_k^K$. We call $\mathcal L_k^K$ the $k^\text{th}$ \emph{Kreweras lattice}.\footnote{We use the names ``Kreweras lattice" and ``noncrossing partition lattice" to distinguish the underlying sets, even though the lattices themselves are isomorphic.} We will find it more convenient to work with the noncrossing partition lattices instead of the Kreweras lattices, so we refer the interested reader to \cite{Bernardi} for the definition of $\mathcal L_k^K$. 

Bernardi and Bonichon used the name ``Catalan lattices" to refer to $\mathcal L_k^S$, $\mathcal L_k^T$, and $\mathcal L_k^K$. Building off of earlier work of Bonichon \cite{Bonichon}, they gave unified bijections between intervals in these lattices and certain types of triangulations and realizers of triangulations. We find it appropriate to add two additional families of posets to this Catalan clan. The first is the family of \emph{Pallo comb posets}, a relatively new family of posets introduced by Pallo in \cite{Pallo} as natural posets that have the Tamari lattices as extensions. They were studied further in \cite{Aval, Csar}. We let $\PC_k$ denote the $k^\text{th}$ Pallo comb poset. These were defined on sets of binary trees in \cite{Pallo, Csar} and on sets of triangulations in \cite{Aval}; in Section \ref{Sec:LatticeBack}, we define the Pallo comb posets on sets of Dyck paths. The second family of posets we add to the clan is the family of Catalan antichains. That is, we let $\mathcal A_k$ denote the antichain (poset with no nontrivial order relations) defined on the set ${\bf D}_k$. 

An \emph{interval} in a poset $P$ is a pair $(x,y)$ of elements of $P$ such that $x\leq y$. Let $\Int(P)$ be the set of all intervals of $P$. It is often interesting to count the intervals in combinatorial classes of posets, and the Catalan posets defined above are no exceptions. De Sainte-Catherine and Viennot \cite{De} proved that 
\begin{equation}\label{Eq1}
|\Int(\mathcal L_k^S)|=C_kC_{k+2}-C_{k+1}^2=\frac{6}{(k+1)(k+2)^2(k+3)}{2k\choose k}{2k+2\choose k+1}.
\end{equation}
Chapoton \cite{Chapoton} proved that 
\begin{equation}\label{Eq2}
|\Int(\mathcal L_k^T)|=\frac{2}{(3k+1)(3k+2)}{4k+1\choose k+1}.
\end{equation}
In his initial investigation of the noncrossing partition lattices, Kreweras \cite{Kreweras} proved that 
\begin{equation}\label{Eq3}
|\Int(\mathcal L_k^K)|=|\Int(\NC_k)|=\frac{1}{2k+1}{3k\choose k}.
\end{equation}
Aval and Chapoton \cite{Aval} proved that 
\begin{equation}\label{Eq15}
\sum_{k\geq 0}|\Int(\PC_k)|x^k=C(xC(x)),
\end{equation}
where $C(x)=\dfrac{1-\sqrt{1-4x}}{2x}$ is the generating function of the sequence of Catalan numbers. Of course, we also have 
\begin{equation}\label{Eq4}
|\Int(\mathcal A_k)|=|{\bf D}_k|=C_k.
\end{equation}  
The formulas in \eqref{Eq1}, \eqref{Eq2}, \eqref{Eq3}, \eqref{Eq15}, \eqref{Eq4} give rise to the OEIS sequences A005700, A000260, A001764, A127632, A000108, respectively \cite{OEIS}.

Throughout this article, the word ``permutation" refers to an ordering of a set of positive integers, written in one-line notation. Let $S_n$ denote the set of permutations of the set $[n]$. The latter half of the present article's title refers to a special collection of permutations that arise in the study of West's stack-sorting map. This map, denoted by $s$, sends permutations of length $n$ to permutations of length $n$. It is a slight variant of the stack-sorting algorithm that Knuth introduced in \cite{Knuth}. The map $s$ was studied extensively in West's 1990 Ph.D. thesis \cite{West} and has received a considerable amount of attention ever since \cite{Bona, BonaSurvey, Bousquet, DefantCounting}. We give necessary background results concerning the stack-sorting map in Section \ref{Sec:StackBack}, but the reader seeking additional historical motivation should consult \cite{Bona, BonaSurvey, DefantCounting} and the references therein. There are multiple ways to define $s$, but the simplest is probably the following recursive definition. First, $s$ sends the empty permutation to itself. If $\pi$ is a permutation whose largest entry is $n$, then we can write $\pi=LnR$. We then define $s(\pi)=s(L)s(R)n$. For example, \[s(35241)=s(3)\,s(241)\,5=3\,s(2)\,s(1)\,45=32145.\]

One of the central definitions concerning the stack-sorting map is that of the \emph{fertility} of a permutation $\pi$; this is simply $|s^{-1}(\pi)|$, the number of preimages of $\pi$ under $s$. Bousquet-M\'elou called a permutation \emph{sorted} if its fertility is positive. The following much more recent definition appeared first in \cite{DefantEngenMiller}. 
\begin{definition}\label{Def1}
We say a permutation is \emph{uniquely sorted} if its fertility is $1$. Let $\mathcal U_n$ denote the set of uniquely sorted permutations in $S_n$.  
\end{definition}

The following theorem from \cite{DefantEngenMiller} characterizes uniquely sorted permutations. A \emph{descent} of a permutation $\pi=\pi_1\cdots\pi_n$ is an index $i\in[n-1]$ such that $\pi_i>\pi_{i+1}$. We let $\Des(\pi)$ denote the set of descents of the permutation $\pi$ and let $\des(\pi)=|\Des(\pi)|$. 

\begin{theorem}[\!\!\cite{DefantEngenMiller}]\label{Thm1}
A permutation of length $n$ is uniquely sorted if and only if it is sorted and has exactly $\dfrac{n-1}{2}$ descents. 
\end{theorem}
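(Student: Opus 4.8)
The plan is to combine West's recursive description $s(LmR)=s(L)\,s(R)\,m$ (where $m$ is the largest entry of the permutation $LmR$) with Defant's Fertility Formula, which evaluates the fertility $|s^{-1}(\pi)|$ as a sum over the valid hook configurations of $\pi$. Recall that a valid hook configuration of $\pi\in S_n$ consists of $\des(\pi)$ hooks in the plot of $\pi$ whose southwest endpoints are precisely the descent tops of $\pi$ and which satisfy a certain noncrossing condition; such a configuration $\mathcal H$ cuts the plot into regions $R_0,\dots,R_{\des(\pi)}$, and writing $q_i(\mathcal H)$ for the number of points of the plot lying in $R_i$ we have $\sum_{i=0}^{\des(\pi)}q_i(\mathcal H)=n-\des(\pi)$ and
\[
|s^{-1}(\pi)|=\sum_{\mathcal H}\ \prod_{i=0}^{\des(\pi)}C_{q_i(\mathcal H)},
\]
the sum ranging over all valid hook configurations of $\pi$. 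Each summand is a product of Catalan numbers, hence a positive integer, and it equals $1$ exactly when every $q_i(\mathcal H)$ lies in $\{0,1\}$ (since $C_0=C_1=1$ while $C_m\ge 2$ for $m\ge 2$). Consequently $\pi$ is sorted if and only if it admits at least one valid hook configuration, and $\pi$ is uniquely sorted if and only if it admits exactly one valid hook configuration and that configuration has all of its regions of size at most $1$.

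I would first record the elementary bound that every sorted $\pi\in S_n$ satisfies $\des(\pi)\le\frac{n-1}{2}$. This follows by induction on $n$ from $s(LmR)=s(L)\,s(R)\,m$: the descents of $s(L)\,s(R)\,m$ are the descents of $s(L)$, the descents of $s(R)$, and at most one more, occurring at the junction of $s(L)$ and $s(R)$ (no descent can occur just before the terminal $m$), so $\des(\pi)\le\des(s(L))+\des(s(R))+1\le\frac{|L|-1}{2}+\frac{|R|-1}{2}+1=\frac{n-1}{2}$, with obvious simplifications when $L$ or $R$ is empty. Granting this bound, the forward direction is short: if $\pi$ is uniquely sorted then it is sorted, so $\des(\pi)\le\frac{n-1}{2}$ and $\pi$ has a valid hook configuration $\mathcal H$; were $\des(\pi)<\frac{n-1}{2}$ we would have $n-\des(\pi)\ge\des(\pi)+2$, exceeding the number $\des(\pi)+1$ of regions, so by the pigeonhole principle some region of $\mathcal H$ would contain at least two points and $|s^{-1}(\pi)|\ge C_2=2$, a contradiction. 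Hence $\des(\pi)=\frac{n-1}{2}$, so in particular $n$ is odd.

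For the reverse direction, let $\pi\in S_n$ be sorted with $d:=\des(\pi)=\frac{n-1}{2}$, so that $n-d=d+1$ equals the number of regions in any valid hook configuration of $\pi$. Since $\pi$ is sorted it has at least one valid hook configuration, and by the dichotomy above it suffices to prove that $\pi$ has exactly one valid hook configuration and that its $d+1$ regions each contain exactly one point (equivalently, at most one, since the sizes already sum to $d+1$). My approach is an induction on $n$ mirroring the recursion: writing $\pi=s(LmR)=s(L)\,s(R)\,m$, the fact that $\des(\pi)$ attains the maximum $\frac{n-1}{2}$ forces the junction descent between $s(L)$ and $s(R)$ to be present and forces $\des(s(L))=\frac{|L|-1}{2}$ and $\des(s(R))=\frac{|R|-1}{2}$ (so $|L|$, $|R|$ are odd and $s(L)$, $s(R)$ are again sorted permutations with maximally many descents). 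One then wants to show that the valid hook configurations of $\pi$ are in bijection with pairs consisting of a valid hook configuration of $s(L)$ and one of $s(R)$ — the hook emanating from the junction descent top, and the placement of the trailing maximum $m$, being forced — and to conclude via the inductive hypothesis applied to each factor. I expect this decomposition to be the main obstacle: one must verify that a valid hook configuration of $\pi$ restricts to legal configurations of the two pieces (in particular that no hook crosses the junction in a forbidden way), that every such pair reassembles into a valid configuration of $\pi$, and that the forced hook and the terminal entry $m$ behave as claimed. Should this geometric bookkeeping prove cumbersome, one can instead establish the reverse inclusion by a counting argument — the forward direction already yields $\mathcal U_n\subseteq\{\pi\in S_n:\pi\text{ is sorted and }\des(\pi)=\tfrac{n-1}{2}\}$, so it would remain to compare cardinalities, for example by pushing the recursion $|s^{-1}(\pi)|=\sum_{k}|s^{-1}(\pi_1\cdots\pi_k)|\,|s^{-1}(\pi_{k+1}\cdots\pi_{n-1})|$ (valid whenever $\pi_n$ is the largest entry of $\pi$) through a generating-function or generating-tree analysis.
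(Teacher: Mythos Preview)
This theorem is not proved in the present paper; it is quoted from \cite{DefantEngenMiller} and used throughout as a black box. There is no in-paper argument to compare your proposal against.

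On the merits of your sketch: the forward direction is correct. In fact, once one recalls (from the same source as the Fertility Formula) that every region in a valid hook configuration is nonempty, the bound $\des(\pi)\le\frac{n-1}{2}$ for sorted $\pi$ falls out directly from $\sum_i q_i=n-\des(\pi)\ge \des(\pi)+1$, so your separate induction via $s(LmR)$, while valid, is not needed.

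The reverse direction is incomplete, as you acknowledge, and the gap is the essential one. Your induction fixes a preimage $\sigma=LmR$ and decomposes $\pi=s(L)\,s(R)\,m$, but the split point between the $s(L)$ block and the $s(R)$ block depends on which preimage you chose; two distinct preimages could a priori split $\pi$ at different positions, with each half then being uniquely sorted in its own right, and the induction would not rule this out. To close the argument you would need to show that the junction descent is determined by $\pi$ alone---equivalently, that the hook with northeast endpoint $(n,n)$ has the same southwest endpoint in every valid hook configuration---and that is essentially the uniqueness statement you are after. The argument closer to \cite{DefantEngenMiller} stays on the hook-configuration side throughout: the region-size count forces every $q_i=1$, and one then checks that this rigidity pins down every northeast endpoint, so the canonical configuration is the only one. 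Your fallback of matching cardinalities would require an independent enumeration of sorted permutations with $\frac{n-1}{2}$ descents, which is no easier than the theorem itself.
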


Uniquely sorted permutations contain a large amount of interesting hidden structure. The results in \cite{DefantEngenMiller} hint that, in some loose sense, uniquely sorted permutations are to general sorted permutations what matchings are to general set partitions. For example, one immediate consequence of Theorem~\ref{Thm1} is that there are no uniquely sorted permutations of even length (just as there are no matchings of a set of odd size). The authors of \cite{DefantEngenMiller} defined a bijection between new combinatorial objects called ``valid hook configurations" and certain weighted set partitions that Josuat-Verg\`es \cite{Josuat} studied in the context of free probability theory. They then showed that restricting this bijection to the set of valid hook configurations of uniquely sorted permutations induces a bijection between uniquely sorted permutations and those weighted set partitions that are matchings. This allowed them to prove that $|\mathcal U_{2k+1}|=A_{k+1}$, where $(A_m)_{m\geq 1}$ is OEIS sequence A180874 and is known as \emph{Lassalle's sequence}. This fascinating new sequence first appeared in \cite{Lassalle}, where Lassalle proved a conjecture of Zeilberger by showing that the sequence is increasing. The article \cite{DefantEngenMiller} also proves that the sequences $(A_{k+1}(\ell))_{\ell=1}^{2k+1}$ are symmetric, where $A_{k+1}(\ell)$ is the number of elements of $\mathcal U_{2k+1}$ with first entry $\ell$. 

The present article is meant to link uniquely sorted permutations that avoid certain patterns with intervals in the Catalan posets discussed above. Let $\mathcal U_n(\tau^{(1)},\ldots,\tau^{(r)})$ denote the set of uniquely sorted permutations in $S_n$ that avoid the patterns $\tau^{(1)},\ldots,\tau^{(r)}$ (see the beginning of Section~\ref{Sec:StackBack} for the definition of pattern avoidance). In Section~\ref{Sec:LatticeBack}, we define the Tamari lattices and Pallo comb posets. Section \ref{Sec:StackBack} reviews relevant background concerning the stack-sorting map and permutation patterns. Section \ref{Sec:Operators} introduces new operators that act on permutations. We prove several properties of these operators that are used heavily in the remainder of the paper and in \cite{DefantFertilityWilf}. In Section \ref{Sec:Stanley}, we find a bijection $\mathcal U_{2k+1}(312)\to\Int(\mathcal L_k^S)$, showing that $312$-avoiding uniquely sorted permutations are counted by the numbers in \eqref{Eq1}. The proof that this map is bijective actually relies on a fun ``energy argument" similar to the one used in the solution of the game ``Conway's Soldiers." In Section \ref{Sec:Tamari}, we find bijections $\mathcal U_{2k+1}(231)\to\mathcal U_{2k+1}(132)$ and $\mathcal U_{2k+1}(132)\to\Int(\mathcal L_k^T)$, showing that the permutations in $\mathcal U_{2k+1}(231)$ and the permutations in $\mathcal U_{2k+1}(132)$ are counted by the numbers in \eqref{Eq2}. In Section \ref{Sec:Kreweras}, we use generating trees to exhibit a bijection $\mathcal U_{2k+1}(312,1342)\to\Int(\mathcal L_k^K)$ (which is not a restriction of the aforementioned bijection $\mathcal U_{2k+1}(312)\to\Int(\mathcal L_k^S)$), proving that the permutations in $\mathcal U_{2k+1}(312,1342)$ are counted by the numbers in \eqref{Eq3}. In Section~\ref{Sec:Pallo}, we show that the permutations in $\mathcal U_{2k+1}(231,4132)$ are in bijection with the intervals in $\Int(\PC_k)$ (we do not obtain such a bijection by restricting the aforementioned map $\mathcal U_{2k+1}(231)\to\Int(\mathcal L_k^T)$). In Section~\ref{Sec:Antichain}, we give bijections demonstrating that \[|\mathcal U_{2k+1}(321)|=|\mathcal U_{2k+1}(132,231)|=|\mathcal U_{2k+1}(132,312)|=|\mathcal U_{2k+1}(231,312)|=C_k.\] Thus, these sets of permutations are in bijection with intervals of the antichain $\mathcal A_k$. In fact, many of the maps from other sections restrict to bijections with antichain intervals (for example, the bijection $\mathcal U_{2k+1}(312)\to\Int(\mathcal L_k^S)$ from Section~\ref{Sec:Stanley} restricts to a bijection $\mathcal U_{2k+1}(312,231)\to\Int(\mathcal A_k)$). In Section \ref{Sec:Conclusion}, we quickly complete the enumeration of sets of the form $\mathcal U_{2k+1}(\tau^{(1)},\ldots,\tau^{(r)})$ when $\tau^{(1)},\ldots,\tau^{(r)}\in S_3$. We also formulate eighteen enumerative conjectures about sets of the form $\mathcal U_{2k+1}(\tau^{(1)},\tau^{(2)})$ with $\tau^{(1)}\in S_3$ and $\tau^{(2)}\in S_4$.

\section{Tamari Lattices and Pallo Comb Posets}\label{Sec:LatticeBack}
In this brief section, we define the Tamari 
lattices and Pallo comb posets. We will not actually need the definition of the Pallo comb posets in the rest of the article, but we include it here for the sake of completeness. 

\begin{definition}\label{Def3}
Given $\Lambda\in{\bf D}_k$, we can write $\Lambda=UD^{\gamma_1}UD^{\gamma_2}\cdots UD^{\gamma_k}$ for some nonnegative integers $\gamma_1,\ldots,\gamma_k$. Let $\lon_j(\Lambda)$ be the smallest nonnegative integer $t$ such that \[\gamma_j+\gamma_{j+1}+\cdots+\gamma_{j+t}>t.\] We call $\lon_j(\Lambda)$ the \emph{longevity} of the $j^\text{th}$ up step of $\Lambda$. The \emph{longevity sequence} of $\Lambda$ is the tuple $(\lon_1(\Lambda),\ldots,\lon_k(\Lambda))$. 
\end{definition} 
Geometrically, $\lon_j(\Lambda)$ is the semilength of the longest Dyck path that we can obtain by starting where the $j^\text{th}$ up step of $\Lambda$ ends and following $\Lambda$. For instance, the longevity sequence of the Dyck path in Figure \ref{Fig1} is $(3,0,1,0,0)$. Adding $1$ to each entry in the longevity sequence of a Dyck path produces the ``distance function" of the Dyck path, as defined in \cite{BousquetTamari}. Proposition 5 in that paper tells us that the following definition of the Tamari lattices is equivalent to the classical definitions. 

\begin{definition}\label{Def2}
Given $\Lambda,\Lambda'\in{\bf D}_k$, we write $\Lambda\leq_T\Lambda'$ if $\lon_j(\Lambda)\leq\lon_j(\Lambda')$ for all $j\in[k]$. The $k^\text{th}$ \emph{Tamari lattice} is the poset $\mathcal L_k^T=({\bf D}_k,\leq_T)$. 
\end{definition}

Theorem 2 in \cite{Pallo2} and Theorem 1 in \cite{Pallo} characterize the Tamari lattices and Pallo comb posets (defined on sets of binary trees) in terms of ``weight sequences" of binary trees. There is a bijection (described in \cite{Bernardi}) between Dyck paths and binary trees such that the weight sequence of the tree corresponding to $\Lambda\in{\bf D}_k$ is the distance function of $\Lambda$. We have used this correspondence to arrive at the next definition (we omit a justification that this definition is equivalent to others because we will not need it). 

\begin{definition}\label{Def4}
Given $\Lambda,\Lambda'\in{\bf D}_k$, we write $\Lambda\leq_{\text{Pallo}}\Lambda'$ if $\Lambda\leq_T\Lambda'$ and if for every $j\in[k]$ such that $\lon_j(\Lambda)<\lon_j(\Lambda')$, we have $\lon_{\ell}(\Lambda)\leq j-\ell-1$ for all $\ell\in[j-1]$. The $k^\text{th}$ \emph{Pallo comb poset} is $\PC_k=({\bf D}_k,\leq_{\text{Pallo}})$. 
\end{definition}

\section{Stack-Sorting Background}\label{Sec:StackBack} 
Throughout this article, permutations are finite words over the alphabet of positive integers without repeated letters (such as $47219$). Recall that $S_n$ is the set of permutations of $[n]$ and that $\mathcal U_n$ is the set of uniquely sorted permutations in $S_n$ (see Definition \ref{Def1}). The \emph{normalization} of a permutation $\pi=\pi_1\cdots\pi_n$ is the permutation in $S_n$ obtained from $\pi$ by replacing the $i^\text{th}$-smallest entry of $\pi$ by $i$ for all $i\in [n]$. We say two permutations have the \emph{same relative order} if their normalizations are equal. A permutation is called \emph{normalized} if it is in $S_n$ for some $n$. If $\sigma=\sigma_1\cdots\sigma_n$ and $\tau=\tau_1\cdots\tau_m$ are permutations, then we say $\sigma$ \emph{contains} the pattern $\tau$ if there are indices $i_1<\cdots<i_m$ such that $\sigma_{i_1}\cdots\sigma_{i_m}$ has the same relative order as $\tau$. Otherwise, we say $\sigma$ \emph{avoids} $\tau$. Let $\Av(\tau^{(1)},\tau^{(2)},\ldots)$ be the set of normalized permutations that avoid the patterns $\tau^{(1)},\tau^{(2)},\ldots$ (this sequence of patterns could be finite or infinite). Let $\Av_n(\tau^{(1)},\tau^{(2)},\ldots)=\Av(\tau^{(1)},\tau^{(2)},\ldots)\cap S_n$ and $\mathcal U_n(\tau^{(1)},\tau^{(2)},\ldots)=\Av(\tau^{(1)},\tau^{(2)},\ldots)\cap\mathcal U_n$. Let $\mathcal U(\tau^{(1)},\tau^{(2)},\ldots)$ denote the set of all uniquely sorted permutations in $\Av(\tau^{(1)},\tau^{(2)},\ldots)$. 

The investigation of permutation patterns initiated with Knuth's introduction of a certain ``stack-sorting algorithm" in \cite{Knuth}. West introduced the stack-sorting map $s$, which is a deterministic variant of Knuth's algorithm, in his dissertation \cite{West}. It follows from Knuth's analysis that $s^{-1}(123\cdots n)=\Av_n(231)$ and that $|\Av_n(231)|=C_n$. 

The fertility of a permutation $\pi$ is $|s^{-1}(\pi)|$. Bousquet-M\'elou \cite{Bousquet} provided an algorithm for determining whether or not a given permutation is sorted (i.e., has positive fertility). She then asked for a general method for computing the fertility of an arbitrary permutation. The present author accomplished this in even greater generality in \cite{DefantPostorder, DefantPreimages} by introducing new combinatorial objects called ``valid hook configurations." He also translated Bousquet-M\'elou's algorithm into the language of valid hook configurations, defining the ``canonical hook configuration" of a permutation \cite{DefantPreimages}. We are fortunate in this article that we do not need all of the definitions and main theorems concerning valid hook configurations. In order to work with uniquely sorted permutations, we will only need to define canonical hook configurations. 

The \emph{plot} of a permutation $\pi=\pi_1\cdots\pi_n$ is obtained by plotting the points $(i,\pi_i)$ for all $i\in[n]$. A \emph{hook} $H$ of $\pi$ is drawn by starting at a point $(i,\pi_i)$ in the plot of $\pi$, moving vertically upward, and then moving to the right to connect with a different point $(j,\pi_j)$. In order to do this, we must have $i<j$ and $\pi_i<\pi_j$. The point $(i,\pi_i)$ is called the \emph{southwest endpoint} of $H$, while $(j,\pi_j)$ is called the \emph{northeast endpoint} of $H$. We say a point $(r,\pi_r)$ \emph{lies strictly below} $H$ if $i<r<j$ and $\pi_r<\pi_j$. We say $(r,\pi_r)$ \emph{lies weakly below} $H$ if it lies strictly below $H$ or if $r=j$. The left image in Figure \ref{Fig3} shows the plot of a permutation along with a single hook. 

\begin{figure}[h]
\begin{center}
\includegraphics[width=.8\linewidth]{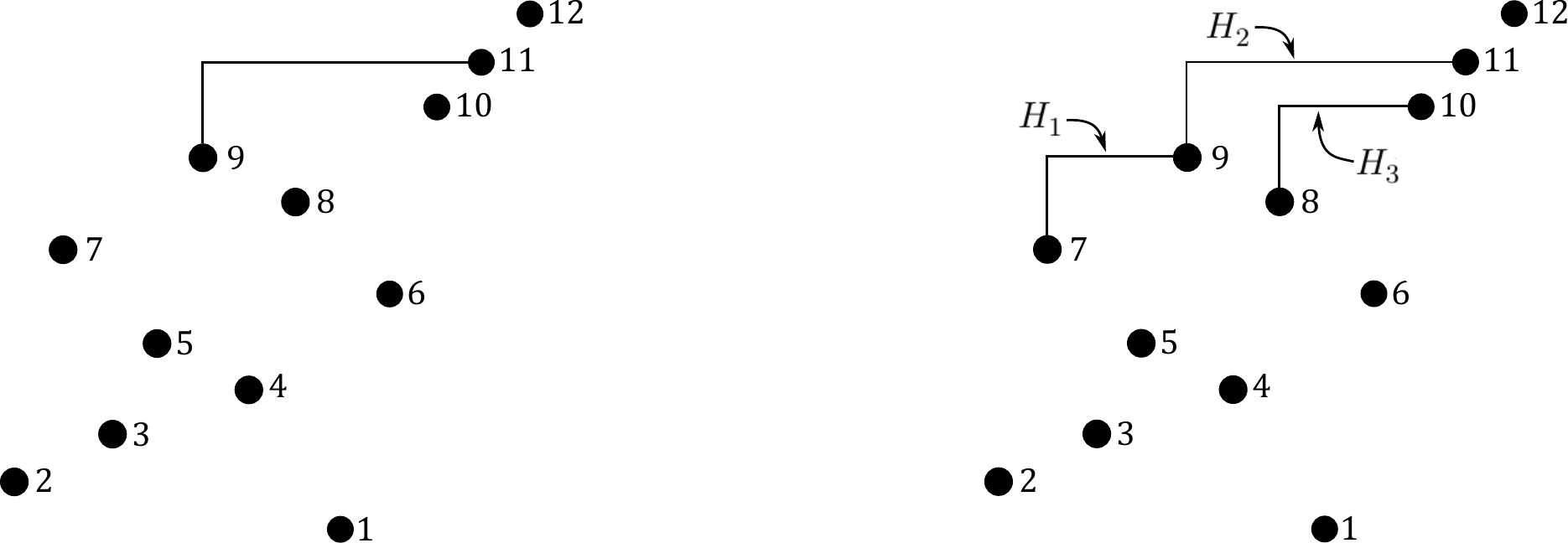}
\caption{On the left is the plot of $2\,7\,3\,5\,9\,4\,8\,1\,6\,10\,11\,12$ along with one hook whose southwest endpoint is $(5,9)$ and whose northeast endpoint is $(11,11)$. The points lying strictly below this hook are $(6,4), (7,8), (8,1), (9,6),(10,10)$. These five points and $(11,11)$ are the points lying weakly below the hook. The right image shows the canonical hook configuration of $2\,7\,3\,5\,9\,4\,8\,1\,6\,10\,11\,12$.}
\label{Fig3}
\end{center}  
\end{figure}

\noindent {\bf Canonical Hook Configuration Construction}

Recall that a descent of $\pi=\pi_1\cdots\pi_n$ is an index $i\in[n-1]$ such that $\pi_i>\pi_{i+1}$. Let $d_1<\cdots<d_k$ be the descents of $\pi$. The \emph{canonical hook configuration} of $\pi$ is the tuple $\mathcal H=(H_1,\ldots,H_k)$ of hooks of $\pi$ defined as follows. First, the southwest endpoint of the hook $H_i$ is $(d_i,\pi_{d_i})$. We let $\mathfrak N_i$ denote the northeast endpoint of $H_i$. We determine these northeast endpoints in the order $\mathfrak N_k,\mathfrak N_{k-1},\ldots,\mathfrak N_1$. First, $\mathfrak N_k$ is the leftmost point lying above and to the right of $(d_k,\pi_{d_k})$. Next, $\mathfrak N_{k-1}$ is the leftmost point lying above and to the right of $(d_{k-1},\pi_{d_{k-1}})$ that does not lie weakly below $H_k$. In general, $\mathfrak N_\ell$ is the leftmost point lying above and to the right of $(d_\ell,\pi_{d_\ell})$ that does not lie weakly below any of the hooks $H_k,H_{k-1},\ldots,H_{\ell+1}$. If there is any time during this process when the point $\mathfrak N_\ell$ does not exist, then $\pi$ does not have a canonical hook configuration. See the right part of Figure \ref{Fig3} for an example of this construction. \hspace*{\fill}$\lozenge$ 

\begin{remark}\label{Rem4}
Suppose $\pi$ is a permutation that has a canonical hook configuration $\mathcal H$. It will be useful to keep the following crucial facts in mind. 

\begin{itemize}
\item The descent tops of the plot of $\pi$ are precisely the southwest endpoints of the hooks in $\mathcal H$. 
\item No hook in $\mathcal H$ passes below a point in the plot of $\pi$. 
\item None of the hooks in $\mathcal H$ cross or overlap each other, except when the southwest endpoint of one hook coincides with the northeast endpoint of another hook. 
\end{itemize}
These observations follow immediately from the preceding construction. \hspace*{\fill}$\lozenge$
\end{remark}

The following useful proposition is a consequence of the discussion of canonical hook configurations in \cite{DefantPreimages}, although it is essentially equivalent to Bousquet-M\'elou's algorithm in \cite{Bousquet}. In combination with Theorem \ref{Thm1}, this proposition allows us to determine whether or not a given permutation is uniquely sorted. 

\begin{proposition}[\hspace{-.015cm}\cite{DefantPreimages}]\label{Prop1}
A permutation is sorted if and only if it has a canonical hook configuration. 
\end{proposition}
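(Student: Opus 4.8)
The plan is to prove both directions by relating the existence of a canonical hook configuration to Bousquet-Mélou's criterion for sortedness, which is in turn a translation of West's recursive description of $s$. First I would recall the "only if" direction from the general theory of valid hook configurations developed in \cite{DefantPreimages}: every permutation in the image of $s$ admits a valid hook configuration, and the canonical hook configuration is the specific valid hook configuration obtained by the greedy procedure above. The content here is that the greedy procedure succeeds whenever \emph{any} valid hook configuration exists. I would argue this by an exchange/minimality argument: given an arbitrary valid hook configuration, process its northeast endpoints from right to left and show that replacing each $\mathfrak N_\ell$ by the leftmost admissible point (the one the canonical construction would pick) preserves validity, because moving a northeast endpoint leftward can only free up points for the hooks with smaller index, never obstruct them. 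Since each such replacement is well-defined, the canonical construction never gets stuck, so $\pi$ has a canonical hook configuration.

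For the "if" direction, I would show that a canonical hook configuration $\mathcal H = (H_1,\dots,H_k)$ of $\pi$ yields an explicit preimage of $\pi$ under $s$, hence $\pi$ is sorted. The hooks cut the plot of $\pi$ into regions; reading off the entries region by region in the order dictated by the nesting structure of the hooks produces a permutation $\sigma$, and one checks by induction on $n$ (peeling off the largest entry $n$ and writing $\pi = LnR$ as in the recursive definition of $s$) that $s(\sigma) = \pi$. The three bullet points in Remark~\ref{Rem4} are exactly what is needed to make this decomposition coherent: the southwest endpoints being the descent tops ensures the regions align with the descents of $\pi$; the hooks not passing below any point ensures each region is a genuine sub-permutation that stack-sorts into the right block; and the non-crossing condition ensures the recursion is well-nested.

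Alternatively, and more economically, I would cite that this proposition is "essentially equivalent to Bousquet-Mélou's algorithm" — her algorithm constructs, for a sorted permutation, a canonical object (her "core") that is in bijection with what the canonical hook configuration encodes, and fails precisely when no such object exists — and then the proof reduces to checking that her construction and the canonical hook configuration construction make the same greedy choices at each step. This is the route I expect the author to take, since the heavy lifting was already done in \cite{Bousquet} and \cite{DefantPreimages}.

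The main obstacle is the "if" direction's correctness claim: verifying that the region-reading permutation $\sigma$ actually satisfies $s(\sigma) = \pi$ requires carefully matching the recursive structure of $s$ (split at the maximum) against the nesting structure of the hooks, and this bookkeeping — keeping track of which hook's northeast endpoint coincides with which descent top, and in what order the regions are concatenated — is the delicate part. If I instead lean on the equivalence with Bousquet-Mélou's algorithm, the obstacle shifts to establishing that equivalence precisely, i.e.\ that "leftmost point not weakly below the previously placed hooks" is the same rule as the one implicit in her core construction.
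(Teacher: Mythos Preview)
The paper does not prove this proposition at all: it is stated with the citation \cite{DefantPreimages} and preceded only by the remark that it ``is a consequence of the discussion of canonical hook configurations in \cite{DefantPreimages}, although it is essentially equivalent to Bousquet-M\'elou's algorithm in \cite{Bousquet}.'' No argument is given in the body of the paper. So there is no proof here to compare your proposal against; the author is simply importing a known result.

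Your sketch is a reasonable outline of how one would prove the statement from scratch, and your closing paragraph correctly anticipates that the author defers to \cite{Bousquet} and \cite{DefantPreimages} rather than reproving anything. The exchange argument you describe for the ``only if'' direction and the region-reading construction for the ``if'' direction are in the spirit of what is done in \cite{DefantPreimages}, though the details (particularly the induction showing $s(\sigma)=\pi$) are indeed where the work lies and are not trivial to fill in. If you were writing a self-contained proof, the cleaner route is probably the one you mention last: identify the canonical hook configuration construction with Bousquet-M\'elou's core/tree construction step by step, so that her already-proven characterization of sorted permutations carries over directly.
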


We end this section by recording some lemmas regarding canonical hook configurations that will prove useful in subsequent sections. Let us say a point $(i,\pi_i)$ in the plot of a permutation $\pi=\pi_1\cdots\pi_n$ is a \emph{descent top} of the plot of $\pi$ if $i$ is a descent of $\pi$. Similarly, say $(i,\pi_i)$ is a \emph{descent bottom} of the plot of $\pi$ if $i-1$ is a descent of $\pi$. The point $(i,\pi_i)$ is called a \emph{left-to-right maximum} of the plot of $\pi$ if it is higher than all of the points to its left. 

\begin{lemma}\label{Lem2}
Let $\pi$ be a uniquely sorted permutation of length $2k+1$. Let $\mathfrak N_1,\ldots,\mathfrak N_k$ be the northeast endpoints of the hooks in the canonical hook configuration of $\pi$.  Let $\DB(\pi)$ be the set of descent bottoms of the plot of $\pi$. The two $k$-element sets $\DB(\pi)$ and $\{\mathfrak N_1,\ldots,\mathfrak N_k\}$ form a partition of the set $\{(i,\pi_i):2\leq i\leq 2k+1\}$.
\end{lemma}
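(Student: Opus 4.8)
The plan is to establish the two claimed facts about the sets $\DB(\pi)$ and $\{\mathfrak N_1,\ldots,\mathfrak N_k\}$ separately: first that each is contained in $\{(i,\pi_i):2\leq i\leq 2k+1\}$, and second that they are disjoint. Since $\pi\in\mathcal U_{2k+1}$, Theorem~\ref{Thm1} tells us $\pi$ has exactly $k$ descents, so both $\DB(\pi)$ (the descent bottoms) and $\{\mathfrak N_1,\ldots,\mathfrak N_k\}$ (the northeast endpoints of the $k$ hooks) have size exactly $k$, provided we also check that the $\mathfrak N_i$ are pairwise distinct. Two disjoint $k$-element subsets of a $2k$-element set must partition it, so once containment, disjointness, and distinctness are in hand, the conclusion is immediate by counting.

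First I would handle the easy containments. A descent bottom $(i,\pi_i)$ has $i-1$ a descent, so $i-1\geq 1$, i.e.\ $i\geq 2$; hence $\DB(\pi)\subseteq\{(i,\pi_i):2\leq i\leq 2k+1\}$. For the northeast endpoints, each $\mathfrak N_\ell$ lies \emph{to the right of} its southwest endpoint $(d_\ell,\pi_{d_\ell})$ with $d_\ell\geq 1$, so the $x$-coordinate of $\mathfrak N_\ell$ is at least $2$; hence $\{\mathfrak N_1,\ldots,\mathfrak N_k\}\subseteq\{(i,\pi_i):2\leq i\leq 2k+1\}$ as well. Distinctness of the $\mathfrak N_\ell$ follows from the third bullet in Remark~\ref{Rem4}: if $\mathfrak N_\ell=\mathfrak N_{\ell'}$ for $\ell<\ell'$, then $\mathfrak N_\ell$ would lie weakly below $H_{\ell'}$ (being its northeast endpoint), contradicting the defining property of $\mathfrak N_\ell$ in the canonical construction — or, more directly, two distinct hooks cannot share a northeast endpoint without overlapping along their horizontal portions, which the construction forbids.

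The main content — and the step I expect to be the main obstacle — is proving that $\DB(\pi)$ and $\{\mathfrak N_1,\ldots,\mathfrak N_k\}$ are disjoint, i.e.\ that no descent bottom is the northeast endpoint of a hook. Suppose for contradiction that $(i,\pi_i)=\mathfrak N_\ell$ is a descent bottom, so $i-1$ is a descent and thus $(i-1,\pi_{i-1})$ is a descent top; by the first bullet of Remark~\ref{Rem4}, $(i-1,\pi_{i-1})$ is the southwest endpoint of some hook $H_m$ (namely the one with $d_m=i-1$). Now $H_\ell$ enters the point $(i,\pi_i)$ horizontally from the left, arriving at height $\pi_i$, while $H_m$ leaves the point $(i-1,\pi_{i-1})$ vertically upward from height $\pi_{i-1}>\pi_i$. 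I would argue that the hook $H_m$ must then cross $H_\ell$: the southwest endpoint of $H_m$ sits immediately to the left of $\mathfrak N_\ell=(i,\pi_i)$ and strictly above it (since $\pi_{i-1}>\pi_i$), so the point $(i-1,\pi_{i-1})$ lies strictly below $H_\ell$ unless $(i-1,\pi_{i-1})$ is itself the southwest endpoint of $H_\ell$ — but $H_\ell$'s northeast endpoint is $(i,\pi_i)$, which has smaller height, an impossibility. Hence $(i-1,\pi_{i-1})$ lies strictly below $H_\ell$, so $H_\ell$ passes over the descent top $(i-1,\pi_{i-1})$; then the vertical segment of $H_m$ rising from $(i-1,\pi_{i-1})$ must puncture the horizontal segment of $H_\ell$, forcing the two hooks to cross at an interior point, in violation of the third bullet of Remark~\ref{Rem4} (they cross without the southwest endpoint of one being the northeast endpoint of the other, since $\mathfrak N_m=(i,\pi_i)\neq(i-1,\pi_{i-1})$ as $m$ could a priori equal some other index — one should check $\mathfrak N_m\neq(i-1,\pi_{i-1})$, which holds because a northeast endpoint lies strictly to the right of its southwest endpoint). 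This contradiction shows $\DB(\pi)\cap\{\mathfrak N_1,\ldots,\mathfrak N_k\}=\emptyset$, and combining with the size count $|\DB(\pi)|=|\{\mathfrak N_1,\ldots,\mathfrak N_k\}|=k=\tfrac12\bigl(|\{(i,\pi_i):2\leq i\leq 2k+1\}|\bigr)$ completes the proof.
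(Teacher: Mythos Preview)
Your overall strategy matches the paper's: reduce to disjointness, then derive a contradiction from a putative descent bottom that is also a northeast endpoint. But the crossing argument you give for disjointness contains a geometric error.

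You assume $(i,\pi_i)=\mathfrak N_\ell$ is a descent bottom and let $(i-1,\pi_{i-1})$ be the southwest endpoint of $H_m$. You then assert that $(i-1,\pi_{i-1})$ \emph{lies strictly below $H_\ell$}, so that the vertical segment of $H_m$ must puncture the horizontal segment of $H_\ell$. This is backwards. The horizontal segment of $H_\ell$ is at height $\pi_i$, and $\pi_{i-1}>\pi_i$ since $i-1$ is a descent; hence $(i-1,\pi_{i-1})$ sits \emph{above} the horizontal segment of $H_\ell$, not below it. The vertical segment of $H_m$ rising from $(i-1,\pi_{i-1})$ therefore starts above $H_\ell$'s horizontal part and never pierces it, so no crossing is forced in the way you describe.

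There are two clean fixes. First, the observation you just made is itself the contradiction: since $d_\ell<i-1<i$ (you correctly rule out $d_\ell=i-1$) and $\pi_{i-1}>\pi_i$, the hook $H_\ell$ passes \emph{below} the plot point $(i-1,\pi_{i-1})$, violating the second bullet of Remark~\ref{Rem4} directly. Second --- and this is what the paper does --- argue via the construction itself rather than via Remark~\ref{Rem4}: in your notation one has $d_\ell<d_m=i-1$, so $\ell<m$; then $\mathfrak N_\ell=(i,\pi_i)$ satisfies $d_m<i$ and $\pi_i<\pi_{i-1}<\text{height of }\mathfrak N_m$, so $\mathfrak N_\ell$ lies strictly below $H_m$. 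But in the canonical construction $\mathfrak N_\ell$ is chosen specifically to avoid lying weakly below any of $H_{\ell+1},\ldots,H_k$, a contradiction. Either route closes the gap.
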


\begin{proof}
Theorem \ref{Thm1} tells us that we do indeed have $|\DB(\pi)|=\des(\pi)=k$. Let $d_1<\cdots<d_k$ be the descents of $\pi$, and let $\mathcal H=(H_1,\ldots,H_k)$ be the canonical hook configuration of $\pi$. We must show that $\DB(\pi)$ and $\{\mathfrak N_1,\ldots,\mathfrak N_k\}$ are disjoint. If this were not the case, then we would have $(d_\ell+1,\pi_{d_\ell+1})=\mathfrak N_m$ for some $\ell,m\in\{1,\ldots,k\}$. The southwest endpoint of $H_m$ is $(d_m,\pi_{d_m})$, and this must lie below and to the left of $\mathfrak N_m$. Thus, $m<\ell$. Also, $\mathfrak N_m$ lies strictly below the hook $H_\ell$. Referring to the canonical hook configuration construction to see how $\mathfrak N_m$ was defined, we find that this is impossible. 
\end{proof}

\begin{lemma}\label{Lem4}
Let $\pi\in\mathcal U_{2k+1}(312)$, and let $\mathcal H=(H_1,\ldots,H_k)$ be the canonical hook configuration of $\pi$. Let $\mathfrak N_i$ denote the northeast endpoint of $H_i$. The left-to-right maxima of the plot of $\pi$ are precisely the points $(1,\pi_1),\mathfrak N_1,\ldots,\mathfrak N_k$.  
\end{lemma}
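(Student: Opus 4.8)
The plan is to reduce the statement to a purely structural fact about $312$-avoiding permutations and then invoke Lemma~\ref{Lem2}. First I would prove the following claim for an arbitrary permutation $\sigma=\sigma_1\cdots\sigma_n$ in $\Av(312)$: the left-to-right maxima of the plot of $\sigma$ are exactly $(1,\sigma_1)$ together with those points $(i,\sigma_i)$ with $i\geq 2$ that are \emph{not} descent bottoms. One inclusion uses no hypothesis at all: $(1,\sigma_1)$ is always a left-to-right maximum, and if $(i,\sigma_i)$ is a descent bottom then $\sigma_{i-1}>\sigma_i$, so it cannot be a left-to-right maximum. For the reverse inclusion I would argue by contradiction. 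Suppose $i\geq 2$, the point $(i,\sigma_i)$ is not a left-to-right maximum, and $\sigma_{i-1}<\sigma_i$. Since $(i,\sigma_i)$ is not a left-to-right maximum, there is an index $a<i$ with $\sigma_a>\sigma_i$; because $\sigma_{i-1}<\sigma_i<\sigma_a$ we must have $a<i-1$, and then $\sigma_a,\sigma_{i-1},\sigma_i$ at positions $a<i-1<i$ form a $312$ pattern, contradicting $\sigma\in\Av(312)$. Hence $\sigma_{i-1}>\sigma_i$, i.e., $(i,\sigma_i)$ is a descent bottom.

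With this claim in hand, the lemma follows immediately. Applying it to $\pi\in\mathcal U_{2k+1}(312)$ shows that the left-to-right maxima of the plot of $\pi$ are $(1,\pi_1)$ together with the points of $\{(i,\pi_i):2\leq i\leq 2k+1\}$ that do not lie in $\DB(\pi)$. But Lemma~\ref{Lem2} says precisely that $\DB(\pi)$ and $\{\mathfrak N_1,\ldots,\mathfrak N_k\}$ partition $\{(i,\pi_i):2\leq i\leq 2k+1\}$, so the complement of $\DB(\pi)$ in that set is exactly $\{\mathfrak N_1,\ldots,\mathfrak N_k\}$. Combining, the left-to-right maxima of the plot of $\pi$ are precisely $(1,\pi_1),\mathfrak N_1,\ldots,\mathfrak N_k$. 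Note that the canonical hook configuration itself plays no direct role beyond what is already packaged into Lemma~\ref{Lem2}.

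I do not expect a genuine obstacle here: the only content is the observation that $312$-avoidance forces ``not a left-to-right maximum'' and ``descent bottom'' to coincide for positions $\geq 2$, after which Lemma~\ref{Lem2} does all the remaining bookkeeping. The one place to be careful is the small case analysis on the position $a$ in the proof of the claim — specifically, ensuring that the assumption $\sigma_{i-1}<\sigma_i$ is exactly what rules out $a=i-1$ and hence exhibits a genuine $312$ pattern.
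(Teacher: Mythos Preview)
Your proof is correct and takes essentially the same approach as the paper: both argue that in a $312$-avoiding permutation the left-to-right maxima are exactly the non-descent-bottoms (together with the first point), and then invoke Lemma~\ref{Lem2}. The paper asserts this characterization in one sentence without the contradiction argument, while you spell it out explicitly, but the logical structure is identical.
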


\begin{proof}
Let $\DB(\pi)$ be the set of descent bottoms of the plot of $\pi$. Because $\pi$ avoids $312$, every point in the plot of $\pi$ that is not in $\DB(\pi)$ must be a left-to-right maximum of the plot of $\pi$. On the other hand, none of the points in $\DB(\pi)$ are left-to-right maxima of the plot of $\pi$. The desired result now follows from Lemma \ref{Lem2}.   
\end{proof}

\section{Permutation Operations}\label{Sec:Operators}

In this section, we establish several definitions and conventions regarding permutations. It will often be convenient to associate permutations with their plots. From this viewpoint, a permutation is essentially just an arrangements of points in the plane such that no two distinct points lie on a single vertical or horizontal line. When viewing permutations in this way, we do not distinguish between two permutations that have the same relative order. In other words, the plots that we draw are really meant to represent equivalence classes of permutations, where two permutations are equivalent if they have the same relative order. For example, if $\lambda=21$ and $\mu=12$, then $\begin{array}{l}
\includegraphics[height=1.2cm]{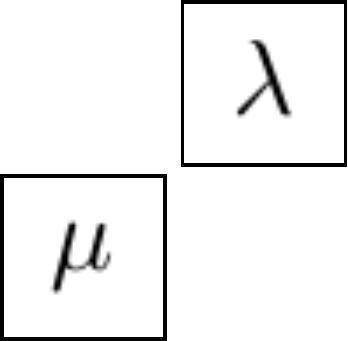}
\end{array}$ represents (the equivalence class of) $1243$. 

Given a permutation $\pi=\pi_1\cdots\pi_n\in S_n$, we let $\rev(\pi)=\pi_n\cdots\pi_1$ be the reverse of $\pi$. Let $\pi^{-1}$ be the inverse of $\pi$ in the group $S_n$; this is the permutation in $S_n$ in which the entry $i$ appears in the $\pi_i^\text{th}$ position. Geometrically, we obtain the plot of $\rev(\pi)$ by reflecting the plot of $\pi$ through the line $x=(n+1)/2$. We obtain the plot of $\pi^{-1}$ by reflecting the plot of $\pi$ though the line $y=x$. Let $\rot(\pi)$ (respectively, $\rot^{-1}(\pi)$) be the permutation whose plot is obtained by rotating the plot of $\pi$ counterclockwise (respectively, clockwise) by $90^\circ$. Equivalently, $\rot(\pi)=\rev(\pi^{-1})$. 

The \emph{sum} of two permutations $\mu$ and $\lambda$, denoted $\mu\oplus\lambda$, is the permutation obtained by placing the plot of $\lambda$ above and to the right of the plot of $\mu$. The \emph{skew sum} of $\mu$ and $\lambda$, denoted $\mu\ominus\lambda$, is the permutation obtained by placing the plot of $\lambda$ below and to the right of the plot of $\mu$. In our geometric point of view, we have \[\mu\oplus\lambda=\begin{array}{l}
\includegraphics[height=1.2cm]{CatUniquePIC4}
\end{array}\quad\text{and}\quad\mu\ominus\lambda=\begin{array}{l}
\includegraphics[height=1.2cm]{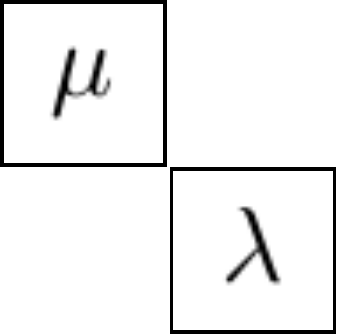}
\end{array}.\] 

For each $i\in[n]$, we define four ``sliding operators" on $S_n$. The first, denoted\footnote{The name of the operator stands for ``southwest up."} $\swu_i$, essentially takes the points in the plot of a permutation $\pi$ that lie southwest of the point with height $i$ and slides them up above all the points that are southeast of the point with height $i$. We illustrate this operator in Figure \ref{Fig4}. To define this more precisely, let $L_i$ (respectively, $R_i$) be the set of elements of $[i-1]$ that lie to the left (respectively, right) of $i$ in $\pi$. If $\pi_j\geq i$, then the $j^\text{th}$ entry of $\swu_i(\pi)$ is $\pi_j$. If $\pi_j<\pi_i$, then either $\pi_j\in L_i$ or $\pi_j\in R_i$. If $\pi_j$ is the $m^\text{th}$-smallest element of $R_i$, then the $j^\text{th}$ entry of $\swu_i(\pi)$ is $m$. If $\pi_j$ is the $m^\text{th}$-largest element of $L_i$, then the $j^\text{th}$ entry of $\swu_i(\pi)$ is $i-m$.  

\begin{figure}[h]
\begin{center}
\includegraphics[width=.55\linewidth]{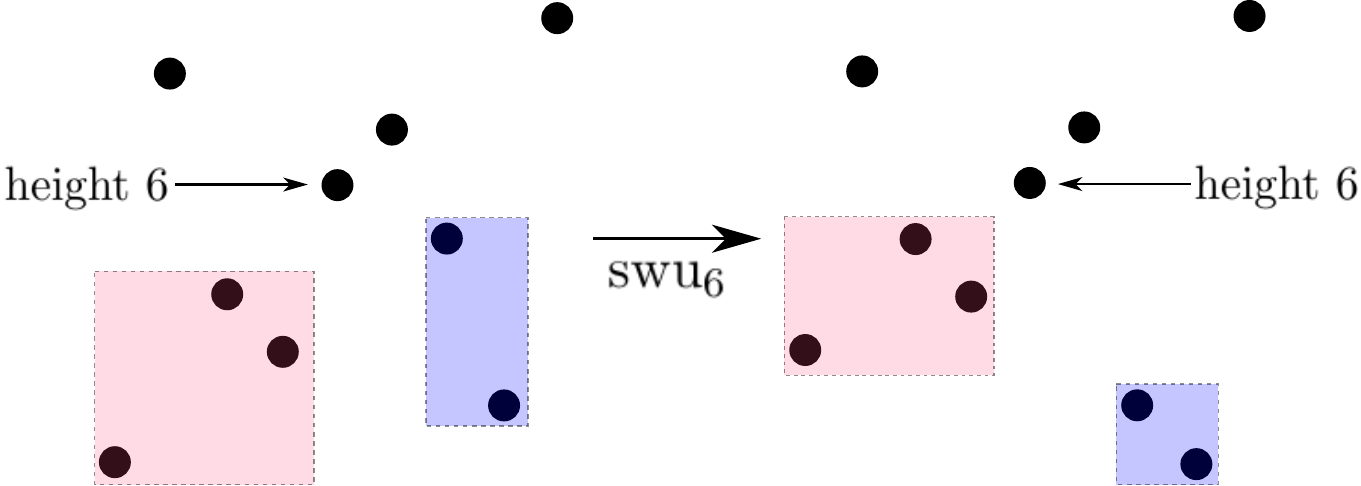}
\caption{The operator $\swu_6$ slides the points to the southwest of the point with height $6$ (shaded in pink) up. It also slides the points to the southeast of the point with height $6$ (shaded in blue) down.}
\label{Fig4}
\end{center}  
\end{figure}

The second operator we define is $\swd_i$, which takes the points in the plot of $\pi$ that lie southwest of the point with height $i$ and slides them down below the points lying to the southeast of that point. We can define this operator formally by \[\swd_i(\pi)=\rev(\swu_i(\rev(\pi))).\] The third and fourth operators, $\swl_i$ and $\swr_i$, are defined by \[\swl_i(\pi)=\rot^{-1}(\swu_i(\rot(\pi)))\quad\text{and}\quad\swr_i(\pi)=\rot^{-1}(\swd_i(\rot(\pi))).\] The operator $\swl_i$ takes the points to the southwest of the point in position $i$ and slides them to the left; the operator $\swr_i$ slides them to the right. We illustrate $\swr_6$ in Figure \ref{Fig5}. We can also define these maps on arbitrary permutations by normalizing the permutations, applying the maps, and then ``unnormalizing." For example, since $\swu_4(1243)=2341$, we have $\swu_4(2496)=4692$. 

\begin{figure}[h]
\begin{center}
\includegraphics[width=.47\linewidth]{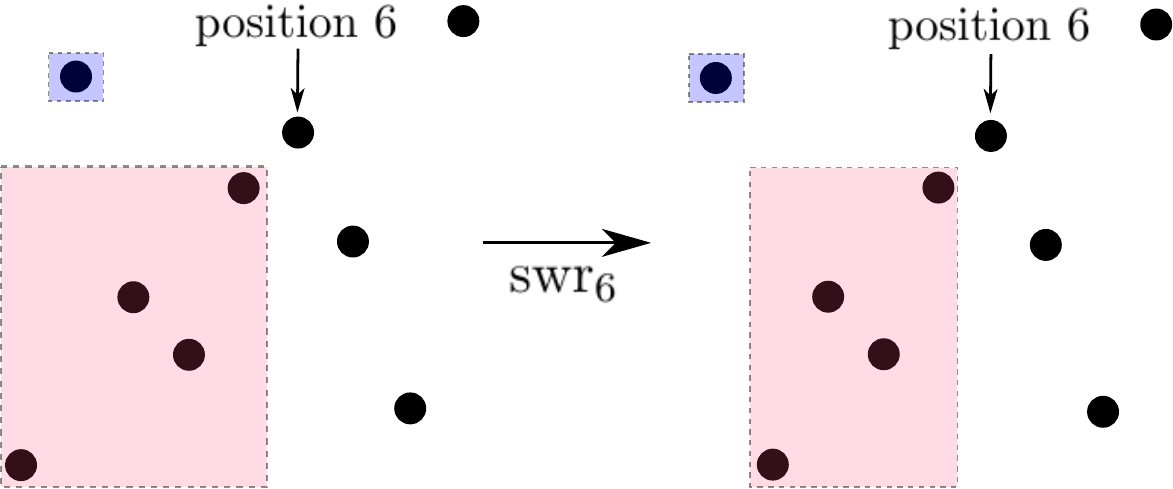}
\caption{The operator $\swr_6$ slides the points to the southwest of the point in position $6$ (shaded in pink) to the right. It moves the point to the northeast of the point in position $6$ (shaded in blue) to the left. }
\label{Fig5}
\end{center}  
\end{figure}

\begin{remark}\label{Rem3}
One simple consequence of the above definition is that after we apply $\swu_i$ (respectively, $\swr_i$) to a permutation, the entry with height $i$ (respectively, position $i$) cannot form the highest entry (respectively, rightmost entry) in a $132$ pattern. A similar remark applies to $\swd_i$ and $\swl_i$ as well. \hspace*{\fill}$\lozenge$
\end{remark}

We define $\swu:S_n\to S_n$ by \[\swu=\swu_1\circ\swu_2\circ\cdots\circ\swu_n.\] An alternative recursive way of thinking of this map, which we illustrate in Figure \ref{Fig6}, is as follows. Let us write $\pi=LnR$. We have \[\swu(\pi)=(\swu(L)\oplus 1)\ominus\swu(R).\] This recursive definition requires us to define $\swu$ on arbitrary permutations, which we can do by normalizing, applying $\swu$, and then unnormalizing. The reader should imagine this sliding operator as acting on a collection of points in the plane instead of a string of numbers. To see that these two definitions of $\swu$ coincide, let $\ell$ be the length of the subpermutation $L$ of $\pi$. The map $\swu_n$ sends $\pi$ to $(L\oplus 1)\ominus R$. When we apply the map $\swu_{n-\ell}\circ\cdots\circ\swu_{n-1}$ to $(L\oplus 1)\ominus R$, it does not move any of the points in the plot of $R$. It does, however, change $L$ into $\swu(L)$ so that $\swu_{n-\ell}\circ\cdots\circ\swu_n(\pi)=(\swu(L)\oplus 1)\ominus R$. Similarly, the map $\swu_1\circ\cdots\circ\swu_{n-\ell-1}$ only has the effect of changing $R$ into $\swu(R)$ so that $\swu(\pi)=\swu_1\circ\cdots\circ\swu_n(\pi)=(\swu(L)\oplus 1)\ominus\swu(R)$. 

\begin{figure}[h]
\begin{center}
\includegraphics[width=.55\linewidth]{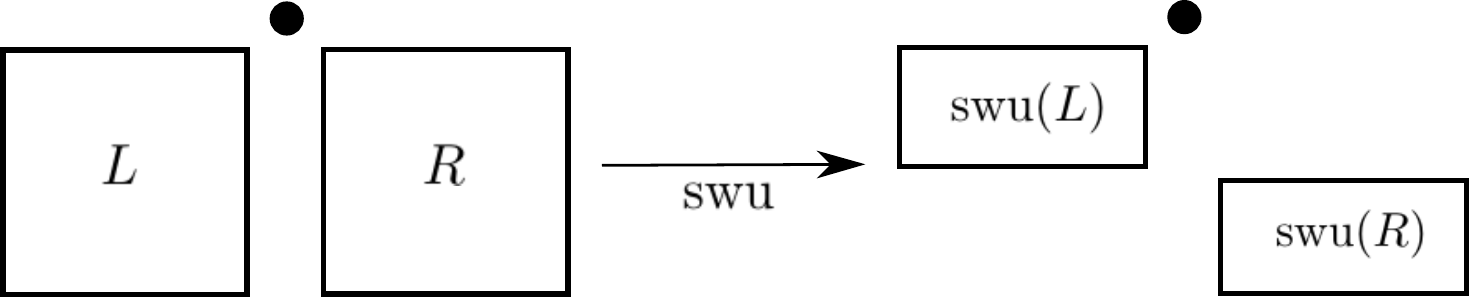}
\caption{The recursive definition of the map $\swu$.}
\label{Fig6}
\end{center} 
\end{figure} 

Similarly, let \[\swd=\swd_1\circ\swd_2\circ\cdots\circ\swd_n,\quad\swl=\swl_1\circ\swl_2\circ\cdots\circ\swl_n,\quad \swr=\swr_1\circ\swr_2\circ\cdots\circ\swr_n.\] As before, we can also define these maps for arbitrary permutations. By a slight abuse of notation, we use the symbols $\swu,\swd,\swl,\swr$ to denote the maps defined on all permutations of all lengths (or alternatively, on their equivalence classes). 

\begin{lemma}\label{Lem5}
The maps \[\swu:\Av(231)\to\Av(132)\quad\text{and}\quad\swd:\Av(132)\to\Av(231)\] are bijections that are inverses of each other. The maps \[\swl:\Av(132)\to\Av(312)\quad\text{and}\quad\swr:\Av(312)\to\Av(132)\] are bijections that are inverses of each other. 
\end{lemma}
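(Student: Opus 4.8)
The plan is to prove the statement about $\swu$ and $\swd$ first and then obtain the statement about $\swl$ and $\swr$ for free by conjugating with $\rot$. For the first part, it suffices to prove two things: that $\swu$ maps $\Av_n(231)$ into $\Av_n(132)$ (so that both displayed maps are well defined, the statement for $\swd$ being the mirror image under $\rev$), and that $\swd\circ\swu$ is the identity on $\Av(231)$. Indeed, once $\swu:\Av(231)\to\Av(132)$ and $\swd:\Av(132)\to\Av(231)$ are known to be well defined, the identity $\swd\circ\swu=\id$ together with its reverse-conjugate $\swu\circ\swd=\id$ immediately make them mutually inverse bijections. Since $\swd=\rev\circ\swu\circ\rev$ and $\rev$ interchanges $\Av(231)$ with $\Av(132)$, every ``mirror'' statement follows formally from its partner, so I only need to handle $\swu$.

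Both facts about $\swu$ I would prove by induction on the length $n$, using the recursive description $\swu(LnR)=(\swu(L)\oplus 1)\ominus\swu(R)$ recorded in the text (here $\pi=LnR$ with $n$ the largest entry), together with its reverse $\swd(LnR)=\swd(L)\oplus(1\ominus\swd(R))$, which one gets by applying $\rev$ and the identity $\rev(A\ominus B)=\rev(B)\oplus\rev(A)$. The crucial structural input is that if $\pi=LnR$ avoids $231$ then every entry of $L$ is smaller than every entry of $R$ (otherwise an entry of $L$, then $n$, then a smaller entry of $R$ would be a $231$ pattern); consequently the blocks $\swu(L)$ and $\swu(R)$ appearing in the recursion occupy exactly the value ranges dictated by $\oplus$ and $\ominus$. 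The mirror fact, with $L$ above $R$, holds when $\pi$ avoids $132$. Granting this, the inductive step for ``$\swu(\pi)$ avoids $132$'' is a short case analysis: $\swu(L)$ and $\swu(R)$ avoid $132$ by induction; the single top point $t$ created by $\oplus 1$ has strictly the largest value and sits to the left of the whole block $\swu(R)$, while every value in $\swu(R)$ is strictly smaller than every value in $\swu(L)$ --- so no $132$ pattern can use $t$, and no $132$ pattern can have one entry in $\swu(L)$ and another in $\swu(R)$. For the identity $\swd(\swu(\pi))=\pi$, write $\swu(\pi)=\swu(L)\,n\,\swu(R)$ (the maximum lies between the two blocks), apply the $\swd$-recursion, and invoke the inductive hypotheses $\swd(\swu(L))=L$, $\swd(\swu(R))=R$ to get $\swd(\swu(\pi))=L\oplus(1\ominus R)$; this is exactly $LnR=\pi$ because in $\pi$ the block $L$ sat below the block $R$. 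The base cases $n\le 1$ are trivial.

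For the second part, from $\swl_i=\rot^{-1}\circ\swu_i\circ\rot$ one gets $\swl=\rot^{-1}\circ\swu\circ\rot$ and likewise $\swr=\rot^{-1}\circ\swd\circ\rot$. Because $\rot$ is a symmetry of permutation plots, it restricts on each $S_n$ to a bijection $\Av(\tau)\to\Av(\rot(\tau))$; and a direct computation from $\rot(\pi)=\rev(\pi^{-1})$ gives $\rot(132)=231$ and $\rot(312)=132$ (hence $\rot^{-1}(231)=132$ and $\rot^{-1}(132)=312$). Chasing $\Av(132)\xrightarrow{\rot}\Av(231)\xrightarrow{\swu}\Av(132)\xrightarrow{\rot^{-1}}\Av(312)$ shows $\swl:\Av(132)\to\Av(312)$, and $\Av(312)\xrightarrow{\rot}\Av(132)\xrightarrow{\swd}\Av(231)\xrightarrow{\rot^{-1}}\Av(132)$ shows $\swr:\Av(312)\to\Av(132)$; finally $\swr\circ\swl=\rot^{-1}\circ(\swd\circ\swu)\circ\rot=\rot^{-1}\circ\rot=\id$ on $\Av(132)$, and $\swl\circ\swr=\id$ on $\Av(312)$ in the same way. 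The step I expect to require the most care is the bookkeeping in the induction --- verifying that $\swu(L),\swu(R)$ (respectively $\swd(L),\swd(R)$) really occupy the value and position ranges prescribed by $\oplus$ and $\ominus$ --- which is precisely where the $231$- (respectively $132$-) avoidance decomposition of $\pi=LnR$ is used; the rest is formal.
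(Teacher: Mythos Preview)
Your proposal is correct and follows essentially the same approach as the paper: an induction on $n$ using the recursive description $\swu(LnR)=(\swu(L)\oplus 1)\ominus\swu(R)$ together with the $231$-avoidance decomposition $\pi=L\oplus(1\ominus R)$, followed by the conjugation $\swl=\rot^{-1}\circ\swu\circ\rot$ to deduce the second statement. Your write-up is slightly more explicit (e.g., deriving the $\swd$-recursion via $\rev$ and spelling out the $132$-avoidance case analysis), but the argument is the same.
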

\begin{proof}
We first prove by induction on $n$ that \[\swu:\Av_n(231)\to\Av_n(132)\quad\text{and}\quad\swd:\Av_n(132)\to\Av_n(231)\] are bijections that are inverses of each other. This is clear if $n\leq 2$, so assume $n\geq 3$. Choose $\pi\in\Av_n(231)$, and write $\pi=LnR$. Because $\pi$ avoids $231$, we have $\pi=L\oplus(1\ominus R)$. Furthermore, $L$ and $R$ avoid $231$. The recursive definition of $\swu$ tells us that $\swu(\pi)=(\swu(L)\oplus 1)\ominus\swu(R)$. By induction, we find that $\swu(L)$ and $\swu(R)$ avoid $132$, so $\swu(\pi)$ also avoids $132$. Moreover, there is a recursive definition of $\swd$ analogous to the recursive definition of $\swu$ that yields \[\swd(\swu(\pi))=\swd((\swu(L)\oplus 1)\ominus\swu(R))=\swd(\swu(L))\oplus(1\ominus\swd(\swu(R))).\] By induction on $n$, this is just $L\oplus(1\ominus R)$, which is $\pi$. This shows that $\swd$ is a left inverse of $\swu$, and a similar argument with the roles of $\swd$ and $\swu$ reversed shows that $\swd$ is also a right inverse of $\swu$. The second statement now follows easily from the first if we use the fact that $\swl=\rot^{-1}\circ\swu\circ\rot$.
\end{proof}

\begin{lemma}\label{Lem10}
The maps \[\swu:\Av(231,312)\to\Av(132,312)\quad\text{and}\quad\swd:\Av(132,312)\to\Av(231,312)\] are bijections that are inverses of each other. The maps \[\swl:\Av(132,231)\to\Av(231,312)\quad\text{and}\quad\swr:\Av(231,312)\to\Av(132,231)\] are bijections that are inverses of each other. 
\end{lemma}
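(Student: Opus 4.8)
The plan is to deduce the first assertion from Lemma~\ref{Lem5} by carrying the pattern $312$ through the same induction used there, and then to obtain the second assertion from the first by conjugating with the inversion map; the only real combinatorial input will be a structural description of $\Av(231,312)$ and $\Av(132,312)$.

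Since Lemma~\ref{Lem5} already gives that $\swu\colon\Av(231)\to\Av(132)$ and $\swd\colon\Av(132)\to\Av(231)$ are mutually inverse bijections, it suffices for the first assertion to prove the containments $\swu(\Av(231,312))\subseteq\Av(132,312)$ and $\swd(\Av(132,312))\subseteq\Av(231,312)$; once these hold, $\swu|_{\Av(231,312)}$ and $\swd|_{\Av(132,312)}$ are maps between $\Av(231,312)$ and $\Av(132,312)$ that are two-sided inverses of each other, hence mutually inverse bijections. I would prove the containments by induction on $n$, using the following structural descriptions, both obtained by writing a permutation as $L\,n\,R$ with $n$ its largest entry: avoidance of $231$ forces every entry of $L$ to be smaller than every entry of $R$, avoidance of $132$ forces every entry of $L$ to be larger than every entry of $R$, and, in either case, avoidance of $312$ is then equivalent to $R$ being decreasing (an ascent $r_a<r_b$ inside $R$ would make $n\,r_a\,r_b$ a $312$ pattern). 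Thus
\[\Av_n(231,312)=\{\,L\oplus(1\ominus R)\;:\;R\text{ decreasing},\ L\in\Av(231,312)\,\},\]
\[\Av_n(132,312)=\{\,(L\oplus 1)\ominus R\;:\;R\text{ decreasing},\ L\in\Av(132,312)\,\}.\]
Now the recursion $\swu(LnR)=(\swu(L)\oplus1)\ominus\swu(R)$ shows at once that $\swu$ fixes every decreasing permutation, and so does $\swd$ since $\swd=\rev\circ\swu\circ\rev$. Hence, if $\pi=L\oplus(1\ominus R)\in\Av_n(231,312)$ with $R$ decreasing and $L\in\Av(231,312)$, then applying the $\swu$-recursion at the maximum of $\pi$ gives $\swu(\pi)=(\swu(L)\oplus1)\ominus\swu(R)=(\swu(L)\oplus1)\ominus R$, and $\swu(L)\in\Av(132,312)$ by the inductive hypothesis, so $\swu(\pi)$ lies in the set on the right of the second display; the containment for $\swd$ is entirely symmetric, using the analogous recursion $\swd(LnR)=\swd(L)\oplus(1\ominus\swd(R))$. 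This proves the first assertion.

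For the second assertion, let $\iota$ denote the inversion map $\pi\mapsto\pi^{-1}$, an involution satisfying $\iota(\Av(\tau^{(1)},\ldots,\tau^{(r)}))=\Av((\tau^{(1)})^{-1},\ldots,(\tau^{(r)})^{-1})$. Since $231^{-1}=312$, $312^{-1}=231$, and $132^{-1}=132$, this gives $\iota(\Av(231,312))=\Av(231,312)$ and $\iota(\Av(132,312))=\Av(132,231)$. From $\rot(\pi)=\rev(\pi^{-1})$ we get $\rot=\rev\circ\iota$ and $\rot^{-1}=\iota\circ\rev$, so (using $\swd=\rev\circ\swu\circ\rev$, and $\swr=\rot^{-1}\circ\swd\circ\rot$ by the same reasoning that gives $\swl=\rot^{-1}\circ\swu\circ\rot$ in the excerpt)
\[\swl=\rot^{-1}\circ\swu\circ\rot=\iota\circ(\rev\circ\swu\circ\rev)\circ\iota=\iota\circ\swd\circ\iota,\qquad\swr=\rot^{-1}\circ\swd\circ\rot=\iota\circ\swu\circ\iota.\]
Conjugating the mutually inverse bijections $\swu\colon\Av(231,312)\to\Av(132,312)$ and $\swd\colon\Av(132,312)\to\Av(231,312)$ of the first assertion by the involution $\iota$ therefore yields mutually inverse bijections $\swr\colon\Av(231,312)\to\Av(132,231)$ and $\swl\colon\Av(132,231)\to\Av(231,312)$, which is the second assertion.

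I expect the main obstacle to be the verification of the two structural descriptions and of the claim that $\swu$ and $\swd$ carry one normal form to the other—in particular, pinning down that $312$-avoidance forces a decreasing segment after the maximum entry, and checking that this segment is left untouched by $\swu$ and $\swd$. None of this is deep, but it requires care with the interplay of $\oplus$, $\ominus$, and the largest entry. Once it is in place, the induction and the passage from $\swu,\swd$ to $\swl,\swr$ are purely formal bookkeeping.
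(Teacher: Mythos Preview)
Your argument is correct and is essentially the paper's own proof: the same structural decomposition $\pi=L\oplus(1\ominus R)$ with $R$ decreasing, the same induction through the recursion $\swu(\pi)=(\swu(L)\oplus1)\ominus\swu(R)$, and the same reduction of the $\swl/\swr$ statement to the $\swu/\swd$ one via the identity $\swl=\rot^{-1}\circ\rev\circ\swd\circ\rev\circ\rot$, which is exactly your $\swl=\iota\circ\swd\circ\iota$ once one unwinds $\rot=\rev\circ\iota$.
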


\begin{proof}
To prove the first statement, we must show that $\swu(\Av_n(231,312))=\Av_n(132,312)$ for all $n\geq 1$. This is clear if $n\leq 2$, so we may assume $n\geq 3$ and induct on $n$. For any given $\pi\in\Av_n(231,312)$, we can use the fact that $\pi$ avoids $231$ to write $\pi=L\oplus (1\ominus R)$ for some permutations $L,R\in \Av(231,312)$. Note that $R$ is a decreasing permutation because $\pi$ avoids $312$. By induction, $\swu(L)\in\Av(132,312)$. Also, $\swu(R)=R$. The recursive definition of $\swu$ tells us that $\swu(\pi)=(\swu(L)\oplus 1)\ominus\swu(R)$. The permutation $\swu(\pi)$ certainly avoids $132$ by Lemma~\ref{Lem5}. Since $\swu(L)$ avoids $312$ and $\swu(R)=R$ is decreasing, $\swu(\pi)$ avoids $312$. This proves that $\swu(\Av_n(231,312))\subseteq\Av_n(132,312)$ for all $n\geq 1$. A similar argument with $\swu$ replaced by $\swd$ proves the reverse containment. The second statement now follows from the first if we use the fact that $\swl=\rot^{-1}\circ\rev\circ\swd\circ\rev\circ\rot$.   
\end{proof}

In the following lemma, recall that $\Des(\pi)$ denotes the set of descents of $\pi$.

\begin{lemma}\label{Lem1}
For every permutation $\pi$, we have $\Des(\swu(\pi))=\Des(\swd(\pi))=\Des(\pi)$. If $\pi\in\Av(312)$, then $\des(\swr(\pi))=\des(\pi^{-1})=\des(\pi)$. If $\pi\in\Av(132)$, then $\des(\swl(\pi))=\des(\pi^{-1})=\des(\pi)$. 
\end{lemma}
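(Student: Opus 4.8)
The plan is to prove all three assertions by induction on $n$, exploiting the recursive definition $\swu(\pi)=(\swu(L)\oplus 1)\ominus\swu(R)$ (where $\pi=LnR$), together with the facts about reversal, inversion, and rotation already recorded in the excerpt. The cleanest organization handles $\swu$ first, then deduces the statement for $\swd$ by conjugating with $\rev$, and finally treats $\swr$ and $\swl$ by conjugating with $\rot$ and using the identities $\des(\pi)=\des(\pi^{-1})$ and $\des(\pi)=\des(\rev(\pi))$ appropriately (noting $\rot(\pi)=\rev(\pi^{-1})$, so $\des(\rot(\pi))=\des(\pi^{-1})=\des(\pi)$ unconditionally, which will be useful bookkeeping).

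For the first claim, I would show $\Des(\swu(\pi))=\Des(\pi)$ by induction. The base cases $n\le 1$ are trivial. Writing $\pi=LnR$ with $|L|=\ell$, the descent set of $\pi$ is the descent set of $L$ (in positions $1,\dots,\ell-1$), together with a descent at position $\ell+1$ coming from $n$ immediately followed by the first entry of $R$ \emph{iff} $R$ is nonempty, together with a shifted copy of $\Des(R)$; there is never a descent at position $\ell$ since $n$ is the maximum. Now $\swu(\pi)=(\swu(L)\oplus 1)\ominus\swu(R)$: the first $\ell+1$ entries form $\swu(L)$ with a new maximum-of-that-block appended in position $\ell+1$, and the last $|R|$ entries form $\swu(R)$ sitting strictly below everything before them. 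By induction $\Des(\swu(L))=\Des(L)$ and $\Des(\swu(R))=\Des(R)$; appending a value larger than all of $\swu(L)$ creates no descent at position $\ell$ and (if $R\ne\emptyset$) the $\ominus$ forces a descent at position $\ell+1$, exactly as in $\pi$. Hence $\Des(\swu(\pi))=\Des(\pi)$. The statement for $\swd$ follows from $\swd(\pi)=\rev(\swu(\rev(\pi)))$ and the elementary fact that reversal sends $\Des$ bijectively to its complementary-position set, so that two permutations of the same length have equal descent sets iff their reverses do; applying this to $\pi$ and $\swu(\rev(\pi))$ gives $\Des(\swd(\pi))=\Des(\pi)$.

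For the remaining two claims I only need cardinalities. If $\pi\in\Av(312)$, then $\rot(\pi)\in\Av(132)$ (rotating by $90^\circ$ counterclockwise sends the pattern $312$ to $132$), so by Lemma~\ref{Lem5} the operator $\swu$ is defined on $\rot(\pi)$ and, by the first claim just proved, $\des(\swu(\rot(\pi)))=\des(\rot(\pi))$. Since $\swr(\pi)=\rot^{-1}(\swd(\rot(\pi)))$ and $\swd$ also preserves $\des$ by the first claim, and since rotation by $90^\circ$ preserves the number of descents (it is the composite of reversal and inversion, each of which preserves $\des$), we get $\des(\swr(\pi))=\des(\rot(\pi))=\des(\pi^{-1})=\des(\pi)$; here $\des(\rot(\pi))=\des(\pi^{-1})$ because $\rot(\pi)=\rev(\pi^{-1})$ and reversal preserves $\des$, and $\des(\pi^{-1})=\des(\pi)$ is the standard fact that descents of $\pi^{-1}$ correspond to descents of $\pi$ read off the inverse. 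The claim for $\swl$ on $\Av(132)$ is entirely symmetric, using $\swl=\rot^{-1}\circ\swu\circ\rot$ and the fact that $\rot$ sends $\Av(132)$ into $\Av(312)$.

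The main obstacle is the inductive step for $\swu$: one must be careful about the boundary descent at position $\ell+1$, checking both that the direct sum $\swu(L)\oplus 1$ introduces no spurious descent at position $\ell$ and that the skew sum correctly reproduces (only) the descent contributed by $n$ in $\pi$, including the degenerate case $R=\emptyset$. Everything after that — the passages to $\swd$, $\swr$, $\swl$ — is a routine conjugation argument, provided one states cleanly at the outset the three elementary invariances: $\rev$ and $\pi\mapsto\pi^{-1}$ each preserve $\des$, and $\rev$ preserves ``having equal descent sets.''
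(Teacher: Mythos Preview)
Your argument for the first claim is correct, though it differs from the paper's: the paper simply observes that each individual operator $\swu_i$ (and $\swd_i$) preserves the descent set by inspection of its definition, and then composes. Your inductive argument via the recursive description $\swu(\pi)=(\swu(L)\oplus 1)\ominus\swu(R)$ is a legitimate alternative, and your deduction of the $\swd$ case from $\swd=\rev\circ\swu\circ\rev$ is fine since you correctly note there that reversal acts bijectively on descent sets (not that it fixes them).

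The second and third claims, however, contain a genuine error. You assert that reversal and inversion each preserve $\des$, and that $\des(\pi^{-1})=\des(\pi)$ is a ``standard fact.'' None of these hold in general: for $\pi\in S_n$ one has $\des(\rev(\pi))=(n-1)-\des(\pi)$, and $\des(\pi^{-1})$ counts pairs $i$ with $i+1$ appearing before $i$ in $\pi$, which need not equal $\des(\pi)$ (e.g.\ $\pi=2413$ has $\des(\pi)=1$ but $\pi^{-1}=3142$ has $\des(\pi^{-1})=2$). Consequently rotation does not preserve $\des$ either, and your conjugation argument collapses. The equality $\des(\pi^{-1})=\des(\pi)$ for $\pi\in\Av(312)$ is not an input you may quote; it is part of the content of the lemma and genuinely requires the $312$-avoidance hypothesis.

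The paper handles this by a direct induction exploiting that hypothesis: any $\pi\in\Av_n(312)$ decomposes as $\pi=\lambda\oplus(\mu\ominus 1)$ with $\lambda,\mu\in\Av(312)$, and the analogous recursive description of $\swr$ gives $\swr(\pi)=\swr(\mu)\ominus(\swr(\lambda)\oplus 1)$. Comparing descent counts on both sides (and on $\pi^{-1}=\lambda^{-1}\oplus(1\ominus\mu^{-1})$) then yields the result by induction. You should replace your conjugation argument with an induction of this shape.
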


\begin{proof}
For each $i\in[n]$ and $\sigma\in S_n$, it is clear from the definitions of $\swu_i$ and $\swd_i$ that $\Des(\swu_i(\sigma))=\Des(\swd_i(\sigma))=\Des(\sigma)$. The first claim now follows from the definitions $\swu(\pi)=\swu_1\circ\cdots\circ\swu_n(\pi)$ and $\swd(\pi)=\swd_1\circ\cdots\circ\swd_n(\pi)$. Now assume $\pi\in\Av_n(312)$. The second claim is trivial if $n\leq 1$, so we may assume $n\geq 2$ and induct on $n$. Because $\pi$ avoids $312$, we can write $\pi=\lambda\oplus(\mu\ominus 1)$ for some $\lambda,\mu\in \Av(312)$. We have $\pi^{-1}=\lambda^{-1}\oplus(1\ominus\mu^{-1})$, so we can use induction to see that $\des(\pi)=\des(\lambda)+\des(\mu)+\delta=\des(\lambda^{-1})+\des(\mu^{-1})+\delta=\des(\pi^{-1})$, where $\delta=1$ if $\mu$ is nonempty and $\delta=0$ if $\mu$ is empty. Similarly, the recursive definition of $\swr$ (which is analogous to the recursive definition that we gave for $\swu$) tells us that $\swr(\pi)=\swr(\mu)\ominus(\swr(\lambda)\oplus 1)$. We know by induction that $\des(\swr(\mu))=\des(\mu)$ and $\des(\swr(\lambda))=\des(\lambda)$. Consequently, \[\des(\swr(\pi))=\des(\swr(\mu))+\des(\swr(\lambda))+\delta=\des(\mu)+\des(\lambda)+\delta=\des(\pi).\] The proof of the third claim is completely analogous to the proof of the second.  
\end{proof}

\begin{lemma}\label{Lem6}
If $\pi$ is a sorted permutation, then $\swu(\pi)$ and $\swd(\pi)$ are sorted. If, in addition, $\pi$ avoids $132$, then $\swl(\pi)$ is sorted. 
\end{lemma}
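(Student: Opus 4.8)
The plan is to prove the statements by induction on the length $n$ of $\pi$, using the recursive description $\swu(\pi)=(\swu(L)\oplus 1)\ominus\swu(R)$ (where $\pi=LnR$) together with Proposition~\ref{Prop1}, which lets us reduce ``$\sigma$ is sorted'' to ``$\sigma$ has a canonical hook configuration.'' So the real content is: \emph{if $\pi$ has a canonical hook configuration, then so does $\swu(\pi)$} (and likewise for $\swd$, and for $\swl$ under the extra hypothesis). I would first dispose of $\swd$ and $\swl$ by the reflection identities $\swd=\rev\circ\swu\circ\rev$ and $\swl=\rot^{-1}\circ\swu\circ\rot$: reversing a permutation turns a canonical hook configuration into the ``canonical'' configuration for the reversed picture only after one checks that being sorted is preserved under the relevant symmetry. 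Here one must be a little careful --- $\rev$ does \emph{not} preserve sortedness in general --- so the cleanest route is to handle $\swu$ directly and then note that $\swd(\pi)=\rev(\swu(\rev(\pi)))$ requires $\rev(\pi)$ to be sorted, which need not hold; instead I would run the same canonical-hook-configuration argument for $\swd$ in parallel, or better, observe that $\swd_i$ and $\swu_i$ act identically on the \emph{set} of descents (Lemma~\ref{Lem1}) and that the geometric effect of $\swd$ is the up--down mirror of $\swu$, so the proof for $\swd$ is verbatim the proof for $\swu$ read upside down. The third claim, about $\swl$ on $132$-avoiders, should follow from $\swl=\rot^{-1}\circ\swu\circ\rot$ once one knows $\rot$ sends sorted $132$-avoiders to sorted $231$-avoiders --- but again $\rot$ need not preserve sortedness in general, so I expect to need the hypothesis ``$\pi$ avoids $132$'' precisely to make $\rot(\pi)$ (or rather the relevant conjugate) land back among sorted permutations; alternatively I would prove it directly via canonical hook configurations.

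For the main case, $\swu(\pi)$ sorted: write $\pi=LnR$ and suppose $\mathcal H=(H_1,\dots,H_k)$ is the canonical hook configuration of $\pi$. The key structural fact is that $\swu$ rearranges points but preserves the descent set (Lemma~\ref{Lem1}), so $\swu(\pi)$ has the same number of descents $k$, and its descent tops are the images of the descent tops of $\pi$. The plan is to build a canonical hook configuration for $\swu(\pi)=(\swu(L)\oplus 1)\ominus\swu(R)$ from canonical hook configurations for $\swu(L)$ and $\swu(R)$, which exist by the inductive hypothesis (since any sub-permutation of a sorted permutation occurring as $L$ or $R$ in the decomposition $\pi=LnR$ is itself sorted --- this is a standard fact about the stack-sorting recursion, following from $s(\pi)=s(L)s(R)n$, so a preimage of $\pi$ restricts to preimages of $L$ and $R$). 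Concretely, in $(\swu(L)\oplus 1)\ominus\swu(R)$ the block $\swu(L)$ sits on top, the single point $1$ (the image of $n$) sits to its lower right, and $\swu(R)$ sits at the bottom right. The hooks of the canonical configuration of $\swu(R)$ carry over unchanged; the hooks of the canonical configuration of $\swu(L)$ carry over but may need their northeast endpoints re-routed (the topmost one, in particular, whose northeast endpoint inside $L$ may now want to attach to the high point that was $n$ --- which in $\swu(\pi)$ is the point ``$1$'' placed below $\swu(L)$, so that is not available); and there is possibly one new descent created at the junction where $\swu(L)$ meets the rest, for which a new hook must be supplied reaching up into $\swu(L)$'s block. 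I would verify, using the explicit greedy (right-to-left) rule defining the canonical hook configuration, that these reattachments succeed: every required northeast endpoint exists because the $\oplus$/$\ominus$ layout guarantees enough points lying above and to the right in the correct regions, and no two hooks cross because the block structure keeps the hooks of $\swu(L)$ confined above those of $\swu(R)$.

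The main obstacle, I expect, is exactly this bookkeeping of the canonical hook configuration under the $\oplus$/$\ominus$ surgery: the canonical construction is \emph{greedy from the right}, so inserting or re-routing one hook can cascade and change which points become northeast endpoints of hooks further left, and one must confirm the cascade always terminates with all endpoints existing. A cleaner way to sidestep the greedy-cascade issue would be to avoid canonical hook configurations entirely and instead argue at the level of preimages: show directly that if $\sigma\in s^{-1}(\pi)$ then one can explicitly write down a permutation $\sigma'\in s^{-1}(\swu(\pi))$, perhaps by applying a suitably conjugated sliding operator to $\sigma$ itself and checking $s\circ(\text{slide}) = (\text{slide})\circ s$ on the relevant classes. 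I would try the preimage-transport approach first, since it is likely to be both shorter and more robust, and fall back on the canonical-hook-configuration surgery only if the commutation with $s$ turns out not to hold on the nose. Either way, the $132$-avoidance hypothesis in the third claim should enter as the condition that makes the relevant rotation/reversal conjugate of $\swl$ act \emph{within} the class of sorted permutations rather than leaving it.
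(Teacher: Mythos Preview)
There is a genuine gap in your inductive approach for $\swu$. You claim that if $\pi=LnR$ is sorted then $L$ and $R$ are sorted, arguing that a preimage of $\pi$ ``restricts to preimages of $L$ and $R$.'' This is false. First, any sorted $\pi\in S_n$ must end in $n$, so $R$ is empty and $\pi=Ln$; but $L$ need not be sorted. Take $\pi=213$: it is sorted (since $s(231)=213$), yet $L=21$ has no preimage under $s$. Your argument that $s(\sigma)=s(L')s(R')n$ yields preimages of $L$ and $R$ fails because the split of $\sigma$ around $n$ need not match the split of $\pi$ around $n$. So the induction hypothesis you invoke for $\swu(L)$ is unavailable, and the whole recursive surgery on canonical hook configurations of $\swu(L)$ and $\swu(R)$ collapses.

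The paper avoids this entirely by working not with the recursive description of $\swu$ but with the individual operators $\swu_i$. For each $i$ it observes that $\swu_i$ slides some points up and others down \emph{without disturbing the canonical hook configuration}: one simply keeps each hook attached to its endpoints, and the crucial fact (second bullet of Remark~\ref{Rem4}) that no hook passes below the point of height $i$ guarantees the resulting configuration is still canonical. Iterating over $i=n,n-1,\ldots,1$ handles $\swu$ and, symmetrically, $\swd$. For $\swl$ on $132$-avoiders the paper takes a genuinely different route: it introduces a \emph{deficiency} statistic $\de(\sigma)$ (the least $\ell$ with $\sigma\oplus 123\cdots\ell$ sorted), proves subadditivity-type inequalities for $\de$ under $\oplus$ and $\ominus$, and then uses the recursive decomposition $\pi=\mu\ominus(\lambda\oplus 1)$ of a $132$-avoider together with $\swl(\pi)=\swl(\lambda)\oplus(\swl(\mu)\ominus 1)$ to show $\de(\swl(\pi))\leq\de(\pi)$ by induction. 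Your proposed conjugation $\swl=\rot^{-1}\circ\swu\circ\rot$ does not help here, since (as you yourself note) $\rot$ does not preserve sortedness, and the $132$-avoidance hypothesis does not repair this.
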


\begin{proof}
Assume $\sigma\in S_n$ is sorted, and let $\mathcal H$ be its canonical hook configuration, which is guaranteed to exist by Proposition \ref{Prop1}. For $i\in[n]$, we claim that $\swu_i(\sigma)$ is sorted. To see this, note that the plot of $\swu_i(\sigma)$ is obtained from the plot of $\sigma$ by sliding some points up and sliding other points down. During this process, let us simply keep the hooks in $\mathcal H$ attached to their southwest and northeast endpoints. This is illustrated in Figure \ref{Fig7}. Since $\swu_i$ does not change the relative order of the points on any one side of the point with height $i$, the resulting configuration of hooks is the canonical hook configuration of $\swu_i(\sigma)$.\footnote{It follows from the results in \cite{DefantPolyurethane} that $\sigma$ and $\swu_i(\sigma)$ actually have the same fertility, but we do not need the full strength of that result in this article.} Crucially, we are using the fact, which follows from the canonical hook configuration construction, that no hooks in $\mathcal H$ pass below the point with height $i$ in the plot of $\sigma$ (see the second bullet in Remark~\ref{Rem4}). We are also using the fact that $\Des(\swu_i(\sigma))=\Des(\sigma)$. A similar argument shows that $\swd_i(\sigma)$ is sorted. As $i$ and $\sigma$ were arbitrary, we find that if $\pi$ is sorted, then $\swu(\pi)=\swu_1\circ\cdots\circ\swu_n(\pi)$ and $\swd(\pi)=\swd_1\circ\cdots\circ\swd_n(\pi)$ are sorted. 

\begin{figure}[h]
\begin{center}
\includegraphics[width=.63\linewidth]{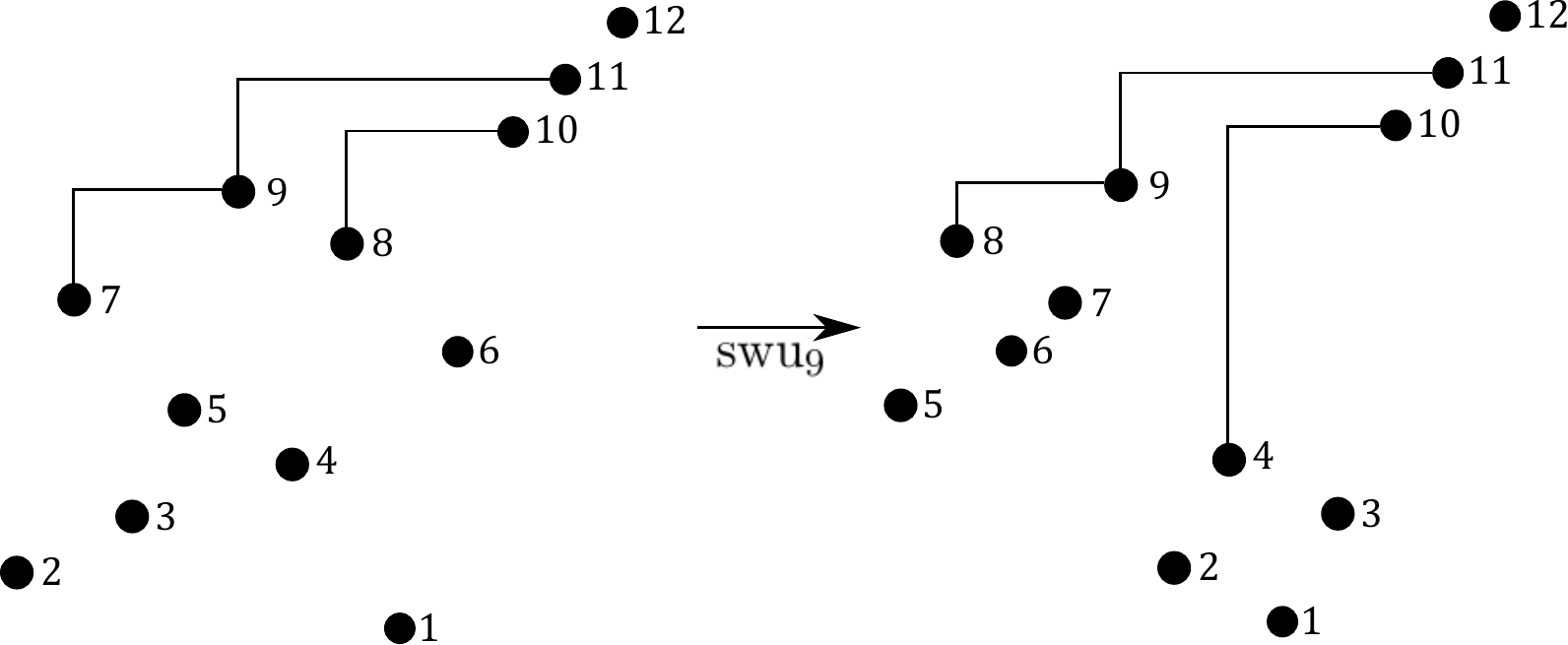}
\caption{The canonical hook configuration of $\sigma$ transforms into the canonical hook configuration of $\swu_9(\sigma)$.}
\label{Fig7}
\end{center} 
\end{figure} 

To help us prove the second statement, let us define the \emph{deficiency} of $\sigma$, denoted $\de(\sigma)$, to be the smallest nonnegative integer $\ell$ such that $\sigma\oplus(123\cdots\ell)$ is sorted. Such an $\ell$ is guaranteed to exist by Corollary 2.3 in \cite{Bousquet}. Thus, $\de(\sigma)=0$ if and only if $\sigma$ is sorted. Roughly speaking, one can think of $\de(\sigma)$ as the number of descent tops in the plot of $\sigma$ that cannot find corresponding northeast endpoints for their hooks during the canonical hook configuration construction (see Figure \ref{Fig15}). Indeed, these descent tops can use the $\ell$ points that are added to the plot of $\sigma$ as their northeast endpoints in the canonical hook configuration of $\sigma\oplus(123\cdots\ell)$. 

\begin{figure}[h]
\begin{center}
\includegraphics[width=.2\linewidth]{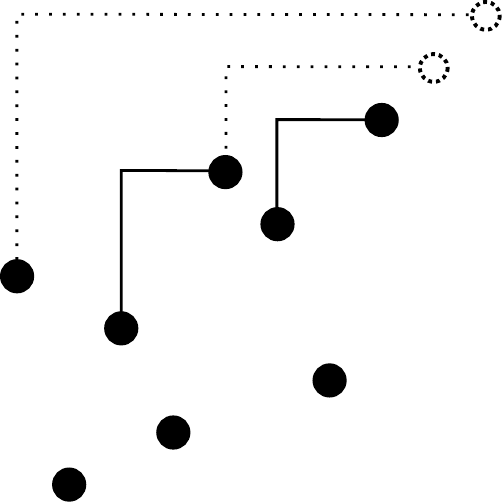}
\caption{The points represented by black discs form the plot of the permutation $51427638$. This permutation is not sorted because the descent tops $(1,5)$ and $(5,7)$ cannot find northeast endpoints for their hooks. If we were to add the additional two dotted circles as points, we would obtain a sorted permutation because we could add in the dotted hooks. Therefore, $\de(\pi)=2$.}
\label{Fig15}
\end{center} 
\end{figure} 

From this interpretation of deficiency, one can verify that for every permutation $\sigma$ and every nonempty permutation $\tau$, we have 
\begin{equation}\label{Eq10}
\de(\sigma\oplus\tau)\leq\de(\sigma\oplus 1)+\de(\tau).
\end{equation} If $\sigma$ is also nonempty, then
\begin{equation}\label{Eq17}
\de(\tau\ominus\sigma)=\de(\tau)+\de(\sigma)+1.
\end{equation} 
We claim that if $\pi$ avoids $132$, then $\de(\swl(\pi))\leq\de(\pi)$. In particular, this will prove that if $\pi$ avoids $132$ and is sorted, then $\swl(\pi)$ is sorted. The proof of this claim is by induction on the length $n$ of the permutation $\pi$. We are done if $n\leq 2$ since $\swl(\pi)=\pi$ in that case, so we may assume that $n\geq 3$. Since $\pi$ avoids $132$, we can write $\pi=\mu\ominus(\lambda\oplus 1)$ for some permutations $\lambda$ and $\mu$ that avoid $132$. There is a recursive definition of $\swl$, which is completely analogous to (and also follows from) the recursive definition of $\swu$, that tells us that $\swl(\pi)=\swl(\lambda)\oplus(\swl(\mu)\ominus 1)$. By induction, 
\begin{equation}\label{Eq12}
\de(\swl(\lambda))\leq\de(\lambda)\quad\text{and}\quad\de(\swl(\mu))\leq\de(\mu).
\end{equation} It follows immediately from the definition of deficiency and the first inequality in \eqref{Eq12} that 
\begin{equation}\label{Eq11}
\de(\swl(\lambda)\oplus 1)\leq\de(\lambda\oplus 1).
\end{equation}
If $\mu$ is nonempty, then we can apply \eqref{Eq10}, \eqref{Eq17}, \eqref{Eq12}, and \eqref{Eq11} to find that \[\de(\swl(\pi))=\de(\swl(\lambda)\oplus(\swl(\mu)\ominus 1))\leq\de(\swl(\lambda)\oplus 1)+\de(\swl(\mu)\ominus 1)\] \[=\de(\swl(\lambda)\oplus 1)+\de(\swl(\mu))+\de(1)+1\leq\de(\lambda\oplus 1)+\de(\mu)+1=\de(\mu\ominus(\lambda\oplus 1))=\de(\pi).\] If $\mu$ is empty, then it follows from \eqref{Eq11} that $\de(\swl(\pi))=\de(\swl(\lambda)\oplus 1)\leq\de(\lambda\oplus 1)=\de(\pi)$.      
\end{proof}

\section{Stanley Intervals and $\mathcal U_{2k+1}(312)$}\label{Sec:Stanley}

Let us begin by establishing some notation that will be useful in this section and the next. Let $n=2k+1$, and assume $\pi=\pi_1\cdots\pi_n\in \mathcal U_n(312)$. Let $\Lambda_i=D$ if $n-i\in\Des(\pi)$, and let $\Lambda_i=U$ otherwise. Let $\Lambda_i'=U$ if $i\in\Des(\rot(\pi))$, and let $\Lambda_i'=D$ otherwise. Form the words $\Lambda=\Lambda_1\cdots\Lambda_{2k}$ and $\Lambda'=\Lambda_1'\cdots\Lambda_{2k}'$ over the alphabet $\{U,D\}$, and let $\DL_k(\pi)=(\Lambda,\Lambda')$. Figure \ref{Fig8} illustrates this procedure. Our goal is to show that $\DL_k(\pi)\in\Int(\mathcal L_k^S)$ and that the resulting map \[\DL_k:\mathcal U_{2k+1}(312)\to\Int(\mathcal L_k^S)\] is bijective.\footnote{We use the letter $\Lambda$ to denote Dyck paths because it resembles an up step followed by a down step. The ``double lambda" symbol $\DL$ is meant to resemble two copies of $\Lambda$ since the bijections output pairs of Dyck paths.}

\begin{figure}[h]
\begin{center}
\includegraphics[width=.4\linewidth]{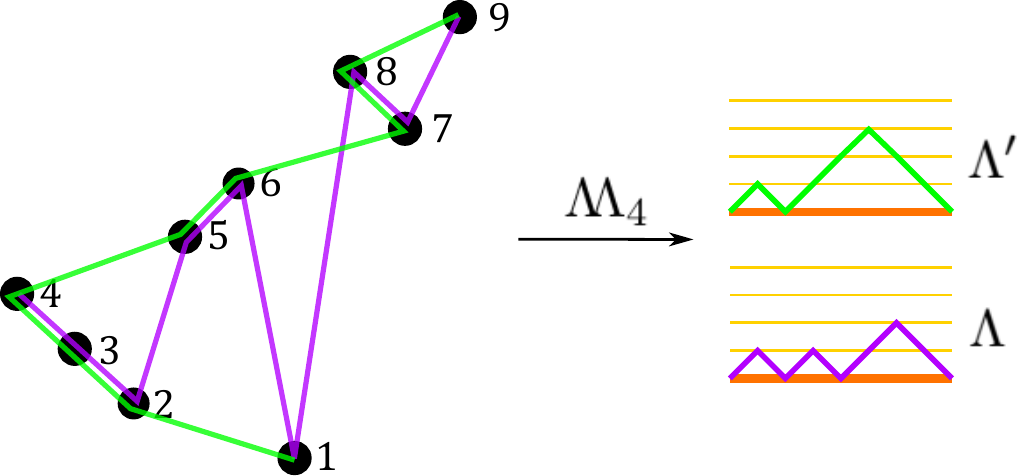}
\caption{An example illustrating the definition of $\DL_4$. Draw a purple path on the plot of the permutation by moving from right to left and connecting the dots along the way. Similarly, draw a green path by moving from top to bottom and connecting the dots along the way. Now rotate the purple path $180^{\circ}$ to obtain the purple Dyck path on the bottom. Similarly, rotate the green path by $90^{\circ}$ clockwise to obtain the \emph{reverse} of the green Dyck path on the top.}
\label{Fig8}
\end{center} 
\end{figure} 

Lemma \ref{Lem4} tells us that the left-to-right maxima of the plot of $\pi$ are $(1,\pi_1),\mathfrak N_1,\ldots,\mathfrak N_k$, where $\mathfrak N_1,\ldots,\mathfrak N_k$ are the northeast endpoints of the hooks in the canonical hook configuration of $\pi$. It will be useful to keep in mind that $\pi_n=n$ because $\pi$ is sorted. Let $\mathfrak R_0,\ldots,\mathfrak R_k$ be these left-to-right maxima, written in order from right to left (for example, $\mathfrak R_0=(n,n)$ and $\mathfrak R_k=(1,\pi_1)$). To improve readability, we also let $\mathfrak P(i)=(i,\pi_i)$. 

From $\pi$, we obtain a $k\times k$ matrix $M(\pi)=(m_{ij})$ by letting $m_{ij}$ be the number of points in the plot of $\pi$ that lie between $\mathfrak R_{k-j}$ and $\mathfrak R_{k-j+1}$ horizontally and lie between $\mathfrak R_i$ and $\mathfrak R_{i+1}$ vertically (we make the convention that all points ``lie above $\mathfrak R_{k+1}$," even though $\mathfrak R_{k+1}$ is not actually a point that we have defined).\footnote{Throughout this article, when we say a point $\mathfrak X$ lies \emph{horizontally between} two points $\mathfrak X'$ and $\mathfrak X''$, we mean that the position (i.e., $x$-coordinate) of $\mathfrak X$ is strictly between the positions of $\mathfrak X'$ and $\mathfrak X''$. Similarly, we say $\mathfrak X$ lies \emph{vertically between} $\mathfrak X'$ and $\mathfrak X''$ if the height (i.e., $y$-coordinate) of $\mathfrak X$ is strictly between the heights of $\mathfrak X'$ and $\mathfrak X''$.} Observe that $m_{ij}=0$ whenever $j\leq k-i$ (in other words, the matrix obtained by ``flipping $M(\pi)$ upside down" is upper-triangular). Alternatively, we can imagine drawing vertical lines through the points $\mathfrak R_0,\ldots,\mathfrak R_k$ and horizontal lines through $\mathfrak R_1,\ldots,\mathfrak R_k$ to produces an array of cells as in Figure \ref{Fig9}. The matrix $M(\pi)$ is now obtained by recording the number of points in each of these cells. 

\begin{figure}[h]
\[
\begin{array}{l}\includegraphics[width=.23\linewidth]{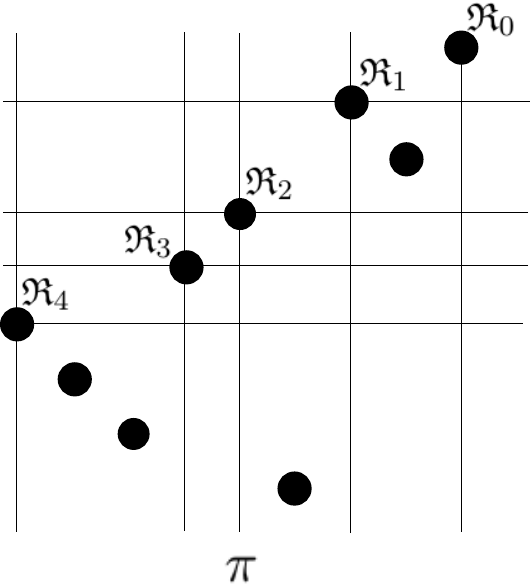}\end{array}\quad M(\pi)=\left( \begin{array}{cccc}
0 & 0 & 0 & 1 \\
0 & 0 & 0 & 0 \\
0 & 0 & 0 & 0 \\
2 & 0 & 1 & 0 \end{array} \right)\]
\caption{The array of cells and the corresponding matrix $M(\pi)$.}
\label{Fig9}
\end{figure} 

\begin{remark}\label{Rem1}
In the array of cells we have just described, the points appearing in each column must be decreasing in height from left to right because $\pi$ avoids $312$. Similarly, the points appearing in each row must be decreasing in height from left to right. This tells us that the permutation $\pi$ is uniquely determined by the matrix $M(\pi)$. Indeed, the matrix tells us how many points to place in each cell, and the positions of all of the points relative to each other are then determined by the fact that the points within rows and columns are decreasing in height. \hspace*{\fill}$\lozenge$  
\end{remark}

We now return to the definition of $\DL_k$. One can check that $\Lambda=UD^{\gamma_1}UD^{\gamma_2}\cdots UD^{\gamma_k}$ and $\Lambda'=UD^{\gamma_1'}UD^{\gamma_2'}\cdots UD^{\gamma_k'}$, where $\gamma_i$ is the sum of the entries in column $k-i+1$ of $M(\pi)$ and $\gamma_i'$ is the sum of the entries in row $i$ of $M(\pi)$. Because every nonzero entry in one of the first $i$ rows of $M(\pi)$ is also in one of the last $i$ columns of $M(\pi)$, we have 
\begin{equation}\label{Eq5}
\gamma_1+\cdots+\gamma_i\geq\gamma_1'+\cdots+\gamma_i'\quad\text{for all }i\in\{1,\ldots,k\}.
\end{equation} 

\begin{lemma}
Preserving the notation from above, we have $\DL_k(\pi)\in\Int(\mathcal L_k^S)$.
\end{lemma}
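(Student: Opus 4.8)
The plan is to show that $\DL_k(\pi) = (\Lambda, \Lambda')$ satisfies the two conditions required for membership in $\Int(\mathcal L_k^S)$: first, that both $\Lambda$ and $\Lambda'$ are genuine Dyck paths of semilength $k$, and second, that $\Lambda' \leq_S \Lambda$ in the Stanley order. The second condition is almost immediate from what has already been set up: by the alternative characterization of $\leq_S$ in terms of prefix counts of up steps, and since $\Lambda = UD^{\gamma_1}\cdots UD^{\gamma_k}$ and $\Lambda' = UD^{\gamma_1'}\cdots UD^{\gamma_k'}$, the relation $\Lambda' \leq_S \Lambda$ is equivalent to the inequalities $\gamma_1 + \cdots + \gamma_i \geq \gamma_1' + \cdots + \gamma_i'$ for all $i \in [k]$, which are exactly the inequalities \eqref{Eq5} established just before the lemma. (One must also be a little careful about which of $\Lambda, \Lambda'$ plays the role of the lower path — with the stated conventions the purple path $\Lambda$, obtained from descents of $\pi$ read right-to-left, lies weakly above the green path $\Lambda'$, obtained from descents of $\rot(\pi)$; the inequalities in \eqref{Eq5} confirm this orientation.)

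The first task is to verify that $\Lambda$ and $\Lambda'$ are Dyck paths, i.e., each has exactly $k$ up steps, exactly $k$ down steps, and every prefix has at least as many $U$'s as $D$'s. The count of up steps is clear from the form $UD^{\gamma_1}\cdots UD^{\gamma_k}$ (there are $k$ of them by construction, one per descent-slot). For the down steps, I would use Theorem~\ref{Thm1}: since $\pi \in \mathcal U_{2k+1}$, it has exactly $k$ descents, so $\Lambda$ has exactly $k$ down steps; similarly $\rot(\pi)$ — whose descents determine $\Lambda'$ — has $\des(\rot(\pi)) = \des(\pi) = k$ by Lemma~\ref{Lem1} (note $\pi \in \Av(312)$, and $\rot(\pi) = \rev(\pi^{-1})$, so one tracks the descent count through $\rot$ and $\rev$). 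Hence $\gamma_1 + \cdots + \gamma_k = k = \gamma_1' + \cdots + \gamma_k'$, and equivalently the total number of points of $\pi$ distributed among the cells of the array is $k$ — this is exactly the content of Lemma~\ref{Lem2}, which says the $k$ descent bottoms together with the $k$ northeast endpoints partition the non-first points, and the descent bottoms are precisely the points lying strictly inside cells (non-left-to-right-maxima). So $\sum m_{ij} = k$, giving both $\sum \gamma_i = k$ and $\sum \gamma_i' = k$.

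The remaining, and main, point is the prefix (non-negativity) condition: $\gamma_1 + \cdots + \gamma_i \leq i$ for all $i$, and similarly for the $\gamma_i'$. Geometrically this says that the points of $\pi$ lying in the first $i$ columns of the array (equivalently, to the right of $\mathfrak R_i$) number at most $i$, and likewise for the first $i$ rows. I would argue this via the staircase structure observed in the paragraph after \eqref{Eq5}: every nonzero entry of $M(\pi)$ in the last $i$ columns lies in one of the last $i$ rows (the matrix is "anti-upper-triangular," $m_{ij} = 0$ for $j \leq k-i$). Combined with $\sum m_{ij} = k$, this forces $\sum_{i' \leq i}\sum_j m_{i'j} \leq i$ and dually; here one uses that each of the $i$ left-to-right maxima $\mathfrak R_1, \ldots, \mathfrak R_i$ "accounts for" at most one unit of slack — more precisely, the non-left-to-right-maximum points (the descent bottoms) in the first $i$ rows each sit below a distinct one of the first $i$ maxima, by the $312$-avoidance (Remark~\ref{Rem1} forces the decreasing column/row structure) and Lemma~\ref{Lem2}. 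I expect the bookkeeping tying "first $i$ rows contain $\leq i$ points" to "$\gamma_1' + \cdots + \gamma_i' \leq i$" to be the fussiest step, since it requires keeping straight the reversal conventions in the definition of $\Lambda$ versus $\Lambda'$ and matching the matrix-row/column sums to path prefixes correctly; but conceptually it is just the observation that among any $i$ consecutive left-to-right maxima, the points strictly between them and below the top one are exactly the descent bottoms assigned there, and there are at most $i$ of these in total by the partition in Lemma~\ref{Lem2}. Once both prefix conditions and the equal totals are in hand, $\Lambda, \Lambda' \in \mathbf{D}_k$, and \eqref{Eq5} gives $\Lambda' \leq_S \Lambda$, so $\DL_k(\pi) = (\Lambda', \Lambda) \in \Int(\mathcal L_k^S)$ (up to the orientation convention), completing the proof.
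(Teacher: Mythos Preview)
Your outline is mostly right, but there is a real gap in the key step, compounded by an orientation error.

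First, the orientation. From \eqref{Eq5} we have $\gamma_1+\cdots+\gamma_i\ge\gamma_1'+\cdots+\gamma_i'$, which says the height of $\Lambda$ at the bottom of each staircase is \emph{at most} the corresponding height of $\Lambda'$. So $\Lambda\leq_S\Lambda'$, not the reverse; the pair is $(\Lambda,\Lambda')$ with $\Lambda$ the lower path. This matters because the nontrivial Dyck condition is $\gamma_1+\cdots+\gamma_i\le i$ for $\Lambda$; once that is known, \eqref{Eq5} gives $\gamma_1'+\cdots+\gamma_i'\le i$ for free, so $\Lambda'$ is automatically a Dyck path. The paper exploits exactly this asymmetry: it proves only that $\Lambda$ is a Dyck path, then invokes \eqref{Eq5}.

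Second, and more seriously, your argument for the prefix inequality does not go through. You claim it follows from the anti-triangular shape of $M(\pi)$ together with $\sum m_{ij}=k$, and then from ``each descent bottom in the first $i$ rows sits below a distinct one of the first $i$ maxima, by $312$-avoidance and Lemma~\ref{Lem2}.'' But neither of these uses the hypothesis that $\pi$ is \emph{sorted}, and that hypothesis is essential. Concretely, take $\pi=14325\in\Av_5(312)$: it has two descents, its matrix is $\left(\begin{smallmatrix}0&2\\0&0\end{smallmatrix}\right)$, and $\gamma_1=2>1$. Both descent bottoms lie in row~$1$, so your ``distinct maxima'' claim fails. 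Of course $14325$ is not sorted (the descent top $(2,4)$ cannot find a northeast endpoint), which is precisely the point: the prefix inequality for $\Lambda$ encodes the existence of the canonical hook configuration.

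The paper's argument supplies the missing injection. Reading $\Lambda$ from the right end of $\pi$, a $D$ at step $i$ records a descent bottom $\mathfrak P(n-i+1)$, hence a descent top (southwest endpoint) at $\mathfrak P(n-i)$; a $U$ records a northeast endpoint $\mathfrak P(n-i+1)$ (Lemma~\ref{Lem2}). Every southwest endpoint among $\mathfrak P(n-p),\dots,\mathfrak P(n-1)$ has its hook's northeast endpoint strictly to its right, hence among $\mathfrak P(n-p+1),\dots,\mathfrak P(n)$, and distinct hooks have distinct northeast endpoints. This injection gives $p-u\le u$ for every prefix of length $p$, which is exactly the Dyck condition for $\Lambda$. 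Your sketch needs this hook-matching argument (or something equivalent that genuinely uses sortedness) to close the gap.
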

\begin{proof}
Let us check that $\Lambda$ and $\Lambda'$ are actually Dyck paths. Because $\pi$ has $k$ descents, exactly half of the letters in $\Lambda$ are equal to $U$. We can use Lemma \ref{Lem1} and the fact that $\rot(\pi)=\rev(\pi^{-1})$ to see that $\des(\rot(\pi))=\des(\rev(\pi^{-1}))=2k-\des(\pi^{-1})=2k-\des(\pi)=k$. Therefore, exactly half of the letters in $\Lambda'$ are equal to $U$. 

Recall that $n=2k+1$. Choose $p\in\{1,\ldots,2k\}$, and let $u$ be the number of appearances of the letter $U$ in $\Lambda_1\cdots\Lambda_p$. For $i\in\{1,\ldots,2k\}$, we have $\Lambda_i=D$ if and only if $\mathfrak P(n-i+1)$ is in $\DB(\pi)$, the set of descent bottoms in the plot of $\pi$. Because $\DB(\pi)$ and $\{\mathfrak N_1,\ldots,\mathfrak N_k\}$ form a partition of $\{\mathfrak P(r):2\leq r\leq n\}$ by Lemma \ref{Lem2}, we have $\Lambda_i=U$ if and only if $\mathfrak P(n-i+1)\in\{\mathfrak N_1,\ldots,\mathfrak N_k\}$. This means that $u$ is the number of northeast endpoints (in the canonical hook configuration of $\pi$) that lie in the set $\{\mathfrak P(n-p+1),\ldots,\mathfrak P(n)\}$. Also, $p-u$ is the number of appearances of $D$ in $\Lambda_1\cdots\Lambda_p$, which is $|\Des(\pi)\cap\{n-p,\ldots,n-1\}|$. This is the number of southwest endpoints that lie in the set $\{\mathfrak P(n-p),\ldots,\mathfrak P(n-1)\}$. Each of these southwest endpoints must belong to a hook whose northeast endpoint is in $\{\mathfrak P(n-p+1),\ldots,\mathfrak P(n)\}$, so $p-u\leq u$. As $p$ was arbitrary, this proves that $\Lambda$ is a Dyck path. 

We can write \[\Lambda=UD^{\gamma_1}UD^{\gamma_2}\cdots UD^{\gamma_k}\quad\text{and}\quad\Lambda'=UD^{\gamma_1'}UD^{\gamma_2'}\cdots UD^{\gamma_k'}.\] For every $i\in\{1,\ldots,k\}$, the quantity $i-\sum_{j=1}^i\gamma_j$ (respectively, $i-\sum_{j=1}^i\gamma_j'$) is the height of the lowest point where $\Lambda$ (respectively, $\Lambda'$) intersects the line $y=-x+2i$. In order to see that the path $\Lambda'$ lies weakly above $\Lambda$, it suffices to check that $i-\sum_{j=1}^i\gamma_j\leq i-\sum_{j=1}^i\gamma_j'$ for every $i\in\{1,\ldots,k\}$. This is precisely the inequality \eqref{Eq5}, so it follows that $\Lambda'$ is a Dyck path satisfying $\Lambda\leq_S\Lambda'$.  
\end{proof}

We need one additional technical lemma before we can prove the invertibility of $\DL_k$. Given a $k\times k$ matrix $M=(m_{ij})$ and indices $r,r',c,c'\in\{1,\ldots,k\}$, consider the matrix obtained by deleting all rows of $M$ except rows $r$ and $r'$ and deleting all columns of $M$ except columns $c$ and $c'$. We say this new matrix is a \emph{lower $2\times 2$ submatrix} of $M$ if $k+1-c\leq r<r'$ and $c<c'$. 

\begin{lemma}\label{Lem3}
Let $a_1,\ldots,a_k,b_1,\ldots,b_k$ be nonnegative integers such that $a_1+\cdots+a_k=b_1+\cdots+b_k$ and $a_{k-i+1}+\cdots+a_k\leq b_{k-i+1}+\cdots+b_k$ for all $i\in\{1,\ldots,k\}$. There exists a unique $k\times k$ matrix $M=(m_{ij})$ with nonnegative integer entries such that 
\begin{enumerate}[(i)]
\item $m_{ij}=0$ whenever $j\leq k-i$;
\item the sum of the entries in column $i$ of $M$ is $b_i$ for every $i\in\{1,\ldots,k\}$;
\item the sum of the entries in row $i$ of $M$ is $a_{k-i+1}$ for every $i\in\{1,\ldots,k\}$;
\item in every lower $2\times 2$ submatrix of $M$, either the bottom left entry or the top right entry is $0$.  
\end{enumerate}
\end{lemma}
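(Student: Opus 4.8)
The plan is to produce the matrix explicitly by a greedy ``columns left‑to‑right, fill each column from the top'' procedure, and then to show that conditions (i)--(iv) force every entry, so that matrix is the only one. First I would rewrite the hypotheses in a handier form: since $\sum_\ell a_\ell=\sum_\ell b_\ell$, the inequalities $a_{k-i+1}+\cdots+a_k\le b_{k-i+1}+\cdots+b_k$ (all $i$) are equivalent to
\[
a_1+\cdots+a_j\ \ge\ b_1+\cdots+b_j\qquad\text{for all }j\in\{1,\ldots,k\},
\]
with equality at $j=k$. Writing $\alpha_i=a_{k-i+1}$, condition (iii) asks the $i$‑th row sum to be $\alpha_i$, and by (i) the rows ``available'' in column $j$ are exactly $k+1-j,\ldots,k$; since $\alpha_{k+1-j},\alpha_{k+2-j},\ldots,\alpha_k$ is $a_j,a_{j-1},\ldots,a_1$, those rows carry total prescribed mass $a_1+\cdots+a_j$. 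This bookkeeping is what connects the construction to the reformulated hypothesis.

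\textbf{Existence.} Build $M^*$ by processing $j=1,2,\ldots,k$, keeping a residual capacity $\rho_i$ for each row (initially $\rho_i=\alpha_i$). To fill column $j$, set demand $\delta=b_j$, and for $i=k+1-j,k+2-j,\ldots,k$ in this order put $m^*_{ij}=\min(\rho_i,\delta)$, decreasing $\rho_i$ and $\delta$ accordingly; put $m^*_{ij}=0$ for $i<k+1-j$. Then (i) and (ii) hold by construction, provided the demand is always met, i.e.\ the residual capacity of rows $k+1-j,\ldots,k$ is $\ge b_j$ at the start of column $j$. But columns $1,\ldots,j-1$ placed total mass $b_1+\cdots+b_{j-1}$, all inside rows $k+2-j,\ldots,k$, so that residual capacity equals $(a_1+\cdots+a_j)-(b_1+\cdots+b_{j-1})$, which is $\ge b_j$ exactly by the reformulated hypothesis; equality at $j=k$ forces all $\rho_i$ to vanish in the end, and since then each row sum is exactly $\alpha_i$, condition (iii) holds.

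\textbf{Condition (iv) for $M^*$.} Suppose $m^*_{r'c}>0$ and $m^*_{rc'}>0$ with $k+1-c\le r<r'$ and $c<c'$. While filling column $c$ from the top, reaching row $r'$ with positive mass means that after each of rows $k+1-c,\ldots,r'-1$ the demand $\delta$ was still positive, so each of those rows was \emph{exhausted}: $\rho_i=0$ once column $c$ is done, for all $i\in\{k+1-c,\ldots,r'-1\}$. Since $r$ lies in that range, $\rho_r=0$ after column $c$; residual capacities only decrease and $c'>c$ is processed later, contradicting $m^*_{rc'}>0$. Hence $M^*$ satisfies (iv).

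\textbf{Uniqueness.} Let $M$ satisfy (i)--(iv) and suppose $M\neq M^*$; take the leftmost differing column $j$. Columns $1,\ldots,j-1$ agree, so the residual capacity $c_i$ of row $i$ just before column $j$ is the same for both. Let $\ell$ be the bottommost row with $m_{\ell j}>0$. For any row $r$ with $k+1-j\le r<\ell$ and any column $j'>j$, a nonzero $m_{rj'}$ together with $m_{\ell j}>0$ would be forbidden by (iv) (the corner $(r,j)$ is in the allowed region since $r\ge k+1-j$); so rows $k+1-j,\ldots,\ell-1$ have no nonzero entries right of column $j$. The row‑sum condition then forces $m_{ij}=c_i$ for $k+1-j\le i\le\ell-1$, hence $m_{\ell j}=b_j-\sum_{i=k+1-j}^{\ell-1}c_i$ and $m_{ij}=0$ for $i>\ell$; the constraints $0<m_{\ell j}\le c_\ell$ then say $\sum_{i<\ell}c_i<b_j\le\sum_{i\le\ell}c_i$, which pins $\ell$ to be exactly the row at which $M^*$'s greedy fill of column $j$ stops. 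So column $j$ of $M$ coincides with that of $M^*$, a contradiction, and $M=M^*$.

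\textbf{Main obstacle.} The real work is purely organizational: keeping the reindexing $\alpha_i=a_{k-i+1}$, the available‑row ranges $k+1-j,\ldots,k$, and the passage between the hypothesis and $\sum_{\ell\le j}a_\ell\ge\sum_{\ell\le j}b_\ell$ straight, so that the two ``exhaustion'' arguments (that $M^*$ obeys (iv), and that any admissible $M$ agrees with $M^*$ column by column) read cleanly. One could instead deduce existence from the theory of transportation polytopes — the reformulated hypothesis is precisely the feasibility condition for the triangular transportation problem, and $M^*$ is a corner‑rule solution — but the self‑contained greedy construction seems the most transparent route.
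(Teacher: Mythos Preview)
Your argument is correct and takes a genuinely different route from the paper.

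The paper proves existence in two stages: first it produces, by a double induction on $k$ and on the total mass $R=\sum a_i$, \emph{some} matrix with the right triangular support and row/column sums (conditions (i)--(iii)); then it introduces the energy $e(N)=\sum_{i,j}2^{i-j}n_{ij}$ and shows that each ``move'' (swap a unit of mass across a bad lower $2\times 2$ submatrix) decreases the energy by at least $2^{1-k}$, so finitely many moves reach a matrix with property (iv). For uniqueness it assumes two distinct solutions and, by repeatedly comparing rows and columns, manufactures infinite strictly decreasing sequences $i_0>i_1>\cdots$ and $j_0>j_1>\cdots$ of indices, a contradiction. You instead build the unique matrix in one shot by the northwest--corner greedy rule, verify (iv) directly from the exhaustion property of that rule, and prove uniqueness by showing that (iv) forces every column of any admissible $M$ to coincide with the greedy column. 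Your approach is more constructive and arguably cleaner: it identifies the matrix explicitly and avoids both the energy potential and the infinite--descent contradiction. What the paper's route buys is a certain charm in the energy argument (it is the ``Conway's Soldiers'' trick alluded to in the introduction) and a proof of (i)--(iii)--feasibility that is independent of any particular filling rule; what your route buys is transparency and a direct link to the transportation--polytope viewpoint you mention at the end. One tiny omission worth noting: your uniqueness paragraph tacitly assumes the first differing column has $b_j>0$ (so that the bottommost nonzero row $\ell$ exists); this is harmless since a column with $b_j=0$ is forced to be zero in any admissible $M$, hence cannot be the first column of disagreement.
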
 

\begin{proof}
Let us first prove that there is a $k\times k$ matrix $M$ satisfying properties (i)--(iii). Let $R=a_1+\cdots+a_k=b_1+\cdots+b_k$. We induct on both $k$ and $R$, observing that the proof is trivial if $k=1$ or $R=0$. Assume $k\geq 2$ and $R\geq 1$. Let us first consider the case in which $b_k=0$. Since $a_k\leq b_k$, we have $a_k=0$ as well. Notice that $a_1+\cdots+a_{k-1}=b_1+\cdots+b_{k-1}$ and $a_{(k-1)-i+1}+\cdots+a_{k-1}\leq b_{(k-1)-i+1}+\cdots+b_{k-1}$ for all $i\in\{1,\ldots,k-1\}$. Using the induction hypothesis (inducting on $k$), we find that there is a matrix $M'=(m_{ij}')$ such that the properties (i)--(iii) are satisfied when we replace $k$ by $k-1$ and replace $M$ by $M'$. Now let $m_{ij}=0$ when $i=1$ or $j=k$, and let $m_{ij}=m_{(i-1)j}'$ when $i\geq 2$ and $j\leq k-1$. The matrix $M=(m_{ij})$ satisfies properties (i)--(iii).  

We now consider the case in which $b_k\geq 1$. Let $\ell$ be the smallest positive integer such that $a_{k-\ell+1}\geq 1$. Let $b_i'=b_i$ for $i\neq k$, and let $b_k'=b_k-1$. Let $a_i'=a_i$ for $i\neq k-\ell+1$, and let $a_{k-\ell+1}'=a_{k-\ell+1}-1$. Note that $a_1'+\cdots+a_k'=b_1'+\cdots+b_k'=R-1$. If $i<\ell$, then $a_{k-i+1}'+\cdots+a_k'=a_{k-i+1}+\cdots+a_k=0\leq b_{k-i+1}'+\cdots+b_k'$. If $i\geq\ell$, then $a_{k-i+1}'+\cdots+a_k'=a_{k-i+1}+\cdots+a_k-1\leq b_{k-i+1}+\cdots+b_k-1=b_{k-i+1}'+\cdots+b_k'$. This shows that the hypotheses of the lemma are satisfied by $a_1',\ldots,a_k',b_1',\ldots,b_k'$. By induction on $R$, we see that there is a matrix $M'=(m_{ij}')$ such that properties (i)--(iii) are satisfied when we replace $a_1,\ldots,a_k,b_1,\ldots,b_k$ by $a_1',\ldots,a_k',b_1',\ldots,b_k'$ and replace $M$ by $M'$. Let $m_{ij}=m_{ij}'$ when $(i,j)\neq(k-\ell+1,k)$, and let $m_{(k-\ell+1)k}=m_{(k-\ell+1)k}'+1$. The matrix $M=(m_{ij})$ satisfies properties (i)--(iii).

To complete the proof of existence, we define the \emph{energy} of a $k\times k$ matrix $N=(n_{ij})$ with nonnegative integer entries to be $e(N)=\sum_{i=1}^k\sum_{j=1}^k 2^{{i-j}}n_{ij}$. If $M$ does not satisfy property (iv), then we can define a ``move" on $M$ as follows. Choose $r,r',c,c'\in\{1,\ldots,k\}$ with $k+1-c\leq r<r'$ and $c<c'$ such that $m_{r'c}$ and $m_{rc'}$ are positive. Now replace the entries $m_{rc},m_{rc'},m_{r'c},m_{r'c'}$ with the entries $m_{rc}+1,m_{rc'}-1,m_{r'c}-1,m_{r'c'}+1$, respectively. Performing a move produces a new matrix $\widetilde M$ that still satisfies properties (i)--(iii). Considering the energies of these matrices, we find that \[e(M)-e(\widetilde M)=-2^{r-c}+2^{r-c'}+2^{r'-c}-2^{r'-c'}\geq-2^{(r'-1)-c}+2^{r-c'}+2^{r'-c}-2^{r'-(c+1)}=2^{r-c'}\geq 2^{1-k}.\] This shows that after applying a finite sequence of moves, we will eventually obtain a matrix that satisfies all of the properties (i)--(iv). 

To prove uniqueness, we assume by way of contradiction that there are two distinct matrices $M=(m_{ij})$ and $M'=(m_{ij}')$ satisfying the properties stated in the lemma. Because they are distinct, we can find a pair $(i_0,j_0)$ with $m_{i_0j_0}\neq m_{i_0j_0}'$. We may assume that $j_0$ was chosen maximally, which means $m_{ij}=m_{ij}'$ whenever $j>j_0$. We may assume that $i_0$ was chosen maximally after $j_0$ was chosen, meaning $m_{ij_0}=m_{ij_0}'$ whenever $i>i_0$. We may assume without loss of generality that $m_{i_0j_0}>m_{i_0j_0}'$. Because $M$ and $M'$ satisfy property (ii), their $j_0^\text{th}$ columns have the same sum. This means that there exists $i_1\neq i_0$ with $m_{i_1j_0}<m_{i_1j_0}'$. In particular, $m_{i_1j_0}'$ is positive. The maximality of $i_0$ guarantees that $i_1<i_0$. Because $M$ and $M'$ satisfy property (iii), their $i_1^\text{th}$ rows have the same sum. This means that there exists $j_1\neq j_0$ with $m_{i_1j_1}>m_{i_1j_0}'$. The maximality of $j_0$ guarantees that $j_1<j_0$. Since $M'$ satisfies property (i) and $m_{i_1j_1}>0$, we must have $k+1-j_1\leq i_1$. Now, the $j_1^\text{th}$ columns of $M$ and $M'$ have the same sum, so there exists $i_2\neq i_1$ such that $m_{i_2j_1}<m_{i_2j_1}'$. If $i_2>i_1$, then $m_{i_2j_1}'$ and $m_{i_1j_0}'$ are positive numbers that form the bottom left and top right entries in a lower $2\times 2$ submatrix of $M$. This is impossible since $M$ satisfies property (iv), so we must have $i_2<i_1$. Continuing in this fashion, we find decreasing sequences of positive integers $i_0>i_1>i_2>\cdots$ and $j_0>j_1>j_2>\cdots$. This is our desired contradiction. 
\end{proof}

\begin{theorem}\label{Thm2}
For each nonnegative integer $k$, the map $\DL_k:\mathcal U_{2k+1}(312)\to\Int(\mathcal L_k^S)$ is a bijection. 
\end{theorem}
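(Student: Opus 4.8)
The strategy is to exhibit an explicit two-sided inverse of $\DL_k$ built on the correspondence $\pi \mapsto M(\pi)$ from Remark~\ref{Rem1} together with the matrix-existence-and-uniqueness result of Lemma~\ref{Lem3}. Recall that Remark~\ref{Rem1} shows $\pi$ is recovered from the matrix $M(\pi)$: the entries tell us how many points go in each cell, and the $312$-avoidance forces a decreasing arrangement within each row and each column. So it suffices to show that (a) $M(\pi)$ satisfies properties (i)--(iv) of Lemma~\ref{Lem3} with $a_i = \gamma_i$ and $b_i = \gamma_i'$ (the column/row sums of $M(\pi)$), and conversely (b) given an interval $(\Lambda,\Lambda') \in \Int(\mathcal L_k^S)$ with $\Lambda = UD^{\gamma_1}\cdots UD^{\gamma_k}$ and $\Lambda' = UD^{\gamma_1'}\cdots UD^{\gamma_k'}$, the unique matrix $M$ produced by Lemma~\ref{Lem3} yields, via the ``decreasing fill'' prescription, a permutation in $\mathcal U_{2k+1}(312)$ whose image under $\DL_k$ is $(\Lambda,\Lambda')$.

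\textbf{Key steps.} First I would verify the hypotheses of Lemma~\ref{Lem3}: that $\Lambda \leq_S \Lambda'$ translates exactly to $\gamma_{k-i+1} + \cdots + \gamma_k \leq \gamma_{k-i+1}' + \cdots + \gamma_k'$ for all $i$ (the prefix condition on Dyck paths, read from the right end), and that both paths having semilength $k$ gives $\sum \gamma_i = \sum \gamma_i' = k$. Second, I would check that $M(\pi)$ satisfies (i)--(iv): property (i) is the upper-triangularity already observed ($m_{ij} = 0$ for $j \leq k-i$); properties (ii) and (iii) are the definitions of $\gamma_i$ and $\gamma_i'$ as column/row sums, matched up with the indexing in Lemma~\ref{Lem3}; property (iv) is the genuinely new content — a lower $2\times 2$ submatrix with both the bottom-left and top-right entries nonzero would produce two points $P$ (in cell $(r',c)$, lower and to the left) and $Q$ (in cell $(r,c')$, higher and to the right) with a third point, namely the left-to-right maximum $\mathfrak R$ bounding the relevant strips, completing a $312$ pattern, contradicting $\pi \in \Av(312)$; I need to set up the coordinates carefully to see which $\mathfrak R$ plays the role of the ``3.'' Third, for the converse direction: from $M$ satisfying (i)--(iv), build the point set by filling cells with decreasing columns and rows; use (iv) to check no $312$ pattern is created (the only way to get a $312$ would involve two points in distinct cells violating (iv), or the boundary left-to-right maxima, which are arranged increasingly); then argue the resulting $\pi$ has $k$ descents, is sorted (its canonical hook configuration exists because the prefix inequality coming from $\Lambda \leq_S \Lambda'$ is exactly what is needed to attach northeast endpoints — this mirrors the ``$p-u \leq u$'' count in the preceding lemma run in reverse), hence by Theorem~\ref{Thm1} is uniquely sorted. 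Fourth, confirm the two constructions are mutually inverse: $M(\pi)$ and the decreasing fill of $M(\pi)$ recover $\pi$ by Remark~\ref{Rem1}, and the column/row sums of the matrix produced by Lemma~\ref{Lem3} are by construction the $\gamma_i, \gamma_i'$ defining $(\Lambda,\Lambda')$, so $\DL_k$ of the reconstructed permutation is $(\Lambda,\Lambda')$; uniqueness in Lemma~\ref{Lem3} then forces the reconstructed matrix to equal $M(\pi)$ when we start from a permutation.

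\textbf{Main obstacle.} The crux is property (iv) and its converse role. In the forward direction I must argue that the $312$-avoidance of $\pi$ precludes the ``bad'' lower $2\times 2$ configuration; the subtlety is that the row and column strips are delimited by the left-to-right maxima $\mathfrak R_0, \ldots, \mathfrak R_k$, so I need to identify precisely which $\mathfrak R$ serves as the top of a forbidden $312$ and confirm it sits above and to the left of the two interior points — the condition $k+1-c \leq r$ in the definition of ``lower $2\times 2$ submatrix'' is exactly what guarantees such a bounding maximum exists. In the reverse direction the danger is more delicate: I must show the decreasing fill of a matrix satisfying (i)--(iv) never accidentally creates a $312$, and that the new permutation's canonical hook configuration really does exist — the latter is where I expect to spend the most care, re-running the northeast-endpoint-assignment argument from the proof of the previous lemma to see that the interval condition $\Lambda \leq_S \Lambda'$ supplies exactly the inequalities needed at each stage. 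The energy argument in Lemma~\ref{Lem3} is already done for us, so it need not be reproved here; I would simply cite it.
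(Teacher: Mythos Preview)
Your plan is essentially the paper's own proof: pass through the matrix $M(\pi)$ via Remark~\ref{Rem1}, invoke Lemma~\ref{Lem3} for existence and uniqueness, and identify the $312$-avoidance with condition~(iv). Two small corrections are worth flagging. First, your assignment $a_i=\gamma_i$, $b_i=\gamma_i'$ does not line up with the row/column sums of $M(\pi)$ as recorded just before \eqref{Eq5}; to make Lemma~\ref{Lem3}(ii)--(iii) read off $M(\pi)$ directly one needs $b_i=\gamma_{k-i+1}$ and $a_i=\gamma_{k-i+1}'$ (this is what the paper uses). Second, the sortedness of the reconstructed permutation comes from $\Lambda$ being a Dyck path---that is exactly the ``$p-u\le u$'' count you cite, which uses only the prefix property of $\Lambda$---and not from the interval relation $\Lambda\le_S\Lambda'$; the latter is instead what feeds the hypothesis of Lemma~\ref{Lem3}. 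With those adjustments the argument goes through exactly as you outline.
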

\begin{proof}
We first prove surjectivity. Fix $(\Lambda,\Lambda')\in\Int(\mathcal L_k^S)$, and write $\Lambda=UD^{\gamma_1}UD^{\gamma_2}\cdots UD^{\gamma_k}$ and $\Lambda'=UD^{\gamma_1'}UD^{\gamma_2'}\cdots UD^{\gamma_k'}$. Put $a_i=\gamma_{k-i+1}'$ and $b_i=\gamma_{k-i+1}$. The fact that $\Lambda$ and $\Lambda'$ are Dyck paths guarantees that $a_1+\cdots+a_k=b_1+\cdots+b_k=k$. The fact that $\Lambda\leq_S\Lambda'$ tells us that $a_{k-i+1}+\cdots+a_k\leq b_{k-i+1}+\cdots+b_k$ for all $i\in\{1,\ldots,k\}$. Appealing to Lemma \ref{Lem3}, we obtain a matrix $M=(m_{ij})$ satisfying the properties (i)--(iv) listed in the statement of that lemma. 
It follows from Remark \ref{Rem1} that we can use such a matrix to obtain a permutation $\pi\in S_n$ (where $n=2k+1$) with $M(\pi)=M$ and with the property that $\Lambda_i=D$ if and only if $n-i\in\Des(\pi)$ and $\Lambda_i'=U$ if and only if $i\in\Des(\rot(\pi))$. Because $D$ appears exactly $k$ times in $\Lambda$, the permutation $\pi$ has exactly $k$ descents. The construction of $\pi$ described in Remark \ref{Rem1} along with property (iv) from Lemma \ref{Lem3} guarantee that $\pi$ avoids $312$. To see that $\pi$ is uniquely sorted, it suffices by Theorem \ref{Thm1} and Proposition \ref{Prop1} to see that it has a canonical hook configuration. This follows from the fact that every prefix of $\Lambda$ contains at least as many copies of $U$ as copies of $D$. Indeed, if $d_1<\cdots<d_k$ are the descents of $\pi$, then this property of $\Lambda$ guarantees that the plot of $\pi$ has at least $\ell$ left-to-right maxima to the right of $(d_{k-\ell+1},\pi_{d_{k-\ell+1}})$ for every $\ell\in\{1,\ldots,k\}$. This means that it is always possible to find a northeast endpoint for the hook $H_{k-\ell+1}$ when we construct the canonical hook configuration of $\pi$. Consequently, $\pi\in\mathcal U_{2k+1}(312)$. Properties (ii) and (iii) from Lemma \ref{Lem3} ensure that $\DL_k(\pi)=(\Lambda,\Lambda')$. 

The injectivity of $\DL_k$ follows from Remark \ref{Rem1} and the uniqueness statement in Lemma \ref{Lem3}. Indeed, if $\DL_k(\pi)=(\Lambda,\Lambda')$, then $M(\pi)$ satisfies properties (i)--(iv) from the lemma with $a_i=\gamma_{k-i+1}'$ and $b_i=\gamma_{k-i+1}$ (it satisfies property (iv) because $\pi$ avoids $312$). According to Remark \ref{Rem1}, the matrix $M(\pi)$ uniquely determines $\pi$.
\end{proof}

Combining Theorem \ref{Thm2} with equation \eqref{Eq1} yields the following corollary. 

\begin{corollary}\label{Cor1}
For each nonnegative integer $k$, \[|\mathcal U_{2k+1}(312)|=C_kC_{k+2}-C_{k+1}^2=\frac{6}{(k+1)(k+2)^2(k+3)}{2k\choose k}{2k+2\choose k+1}.\]
\end{corollary}

\section{Tamari Intervals, $\mathcal U_{2k+1}(132)$, and $\mathcal U_{2k+1}(231)$}\label{Sec:Tamari}

In Section \ref{Sec:Operators}, we introduced sliding operators $\swu,\swd,\swl,\swr$. In the previous section, we found bijections $\DL_k:\mathcal U_{2k+1}(312)\to\Int(\mathcal L_k^S)$, where $\mathcal L_k^S$ is the $k^\text{th}$ Stanley lattice. Recall that $\mathcal L_k^T$ is the $k^\text{th}$ Tamari lattice, which we defined in Section \ref{Sec:LatticeBack}. The purpose of the current section is to show that for each nonnegative integer $k$, the maps $\swu:\mathcal U_{2k+1}(231)\to\mathcal U_{2k+1}(132)$ and $\DL_k\circ\swl:\mathcal U_{2k+1}(132)\to\Int(\mathcal L_k^T)$ are bijections.\footnote{It is not clear at this point that the composition $\DL_k\circ\swl:\mathcal U_{2k+1}(132)\to\Int(\mathcal L_k^T)$ is even well defined, but we will see that it is.} We have actually already done all of the heavy lifting needed to establish the first of these bijections. 

\begin{theorem}\label{Thm3}
For each nonnegative integer $k$, the maps $\swu:\mathcal U_{2k+1}(231)\to\mathcal U_{2k+1}(132)$ and $\swd:\mathcal U_{2k+1}(132)\to\mathcal U_{2k+1}(231)$ are bijections that are inverses of each other. 
\end{theorem}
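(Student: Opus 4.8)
The plan is to combine three facts already established in the excerpt. From Lemma~\ref{Lem5}, the maps $\swu:\Av(231)\to\Av(132)$ and $\swd:\Av(132)\to\Av(231)$ are mutually inverse bijections, so in particular they restrict to mutually inverse bijections $\Av_{2k+1}(231)\to\Av_{2k+1}(132)$. It therefore suffices to show that $\swu$ carries $\mathcal U_{2k+1}(231)$ into $\mathcal U_{2k+1}(132)$ and that $\swd$ carries $\mathcal U_{2k+1}(132)$ into $\mathcal U_{2k+1}(231)$; since the two maps are inverse to each other on the larger pattern-avoidance classes, these two containments automatically upgrade to mutually inverse bijections between the uniquely sorted subsets.

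First I would recall the characterization of uniquely sorted permutations from Theorem~\ref{Thm1}: a length-$(2k+1)$ permutation is uniquely sorted iff it is sorted and has exactly $k$ descents. So fix $\pi\in\mathcal U_{2k+1}(231)$. By Lemma~\ref{Lem5}, $\swu(\pi)\in\Av_{2k+1}(132)$. By Lemma~\ref{Lem6}, since $\pi$ is sorted, $\swu(\pi)$ is sorted. By Lemma~\ref{Lem1}, $\Des(\swu(\pi))=\Des(\pi)$, so $\des(\swu(\pi))=\des(\pi)=k$. Hence $\swu(\pi)$ is a sorted permutation of length $2k+1$ with exactly $k$ descents, which by Theorem~\ref{Thm1} means $\swu(\pi)\in\mathcal U_{2k+1}$, and combined with $132$-avoidance, $\swu(\pi)\in\mathcal U_{2k+1}(132)$. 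The symmetric argument — using Lemma~\ref{Lem5} for the pattern statement, Lemma~\ref{Lem6} for sortedness, and Lemma~\ref{Lem1} for the descent count — shows $\swd$ maps $\mathcal U_{2k+1}(132)$ into $\mathcal U_{2k+1}(231)$.

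Finally, since Lemma~\ref{Lem5} already gives that $\swu$ and $\swd$ are two-sided inverses as maps $\Av_{2k+1}(231)\leftrightarrow\Av_{2k+1}(132)$, the restrictions $\swu:\mathcal U_{2k+1}(231)\to\mathcal U_{2k+1}(132)$ and $\swd:\mathcal U_{2k+1}(132)\to\mathcal U_{2k+1}(231)$ are likewise two-sided inverses, hence bijections. There is essentially no obstacle here: the content of the theorem was all packaged into the preceding lemmas, and the only thing to be careful about is verifying that $\swd$ preserves sortedness in the $132$-avoiding direction, which is exactly the second sentence of Lemma~\ref{Lem6} (and its $\swd$-analogue, handled in the first sentence of that lemma). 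I expect the write-up to be short — a couple of lines invoking Lemmas~\ref{Lem5}, \ref{Lem6}, \ref{Lem1} and Theorem~\ref{Thm1}.
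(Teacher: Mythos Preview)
Your proposal is correct and matches the paper's own proof essentially line for line: both invoke Lemma~\ref{Lem5} for the ambient bijection, Lemma~\ref{Lem1} for descent preservation, Lemma~\ref{Lem6} for sortedness, and Theorem~\ref{Thm1} to conclude unique sortedness. One small wording slip: the second sentence of Lemma~\ref{Lem6} concerns $\swl$, not $\swd$; the fact that $\swd$ preserves sortedness is already contained in the first sentence of that lemma (no pattern hypothesis needed), as your parenthetical correctly notes.
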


\begin{proof}
Lemma \ref{Lem5} tells us that $\swu:\Av(231)\to\Av(132)$ and $\swd:\Av(132)\to\Av(231)$ are bijections that are inverses of each other. These maps also preserve lengths of permutations, so it suffices to show that they map uniquely sorted permutations to uniquely sorted permutations. If $\pi\in\mathcal U_{2k+1}(231)$, then we know from Theorem \ref{Thm1} and Lemma \ref{Lem1} that $\des(\swu(\pi))=\des(\pi)=k$. Lemma \ref{Lem6} tells us that $\swu(\pi)$ is sorted, so it follows from Theorem \ref{Thm1} that $\swu(\pi)$ is uniquely sorted. This shows that $\swu(\mathcal U_{2k+1}(231))\subseteq\mathcal U_{2k+1}(132)$, and a similar argument proves the reverse containment. 
\end{proof}

We now proceed to establish our bijections between $132$-avoiding uniquely sorted permutations and intervals in Tamari lattices. This essentially amounts to proving that if $\pi\in\mathcal U_{2k+1}(312)$, then $\swr(\pi)$ is sorted if and only if $\DL_k(\pi)\in\Int(\mathcal L_k^T)$. We do this in the next two propositions. 

\begin{proposition}\label{Prop2}
If $\pi\in\mathcal U_{2k+1}(312)$ is such that $\DL_k(\pi)\in\Int(\mathcal L_k^T)$, then $\swr(\pi)$ is sorted. 
\end{proposition}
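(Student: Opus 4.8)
The plan is to pass through the combinatorial description of $\DL_k(\pi)$ in terms of the matrix $M(\pi)$ and the longevity-sequence characterization of the Tamari order (Definition~\ref{Def2}), and to analyze the effect of $\swr$ on the plot of $\pi$ directly, exhibiting a canonical hook configuration for $\swr(\pi)$. Recall that $\swr=\swr_1\circ\cdots\circ\swr_n$ slides, at each stage, the points southwest of a chosen point rightward. Since $\pi$ avoids $312$, by Remark~\ref{Rem3} the permutation $\swr(\pi)$ avoids $132$; moreover by Lemma~\ref{Lem1} we have $\des(\swr(\pi))=\des(\pi)=k$. So by Theorem~\ref{Thm1} and Proposition~\ref{Prop1} it suffices to produce a canonical hook configuration for $\swr(\pi)$, i.e.\ to show that when we run the canonical hook configuration construction on $\swr(\pi)$, every descent top finds a northeast endpoint.

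First I would set up notation: write $\Lambda=UD^{\gamma_1}\cdots UD^{\gamma_k}$ and $\Lambda'=UD^{\gamma_1'}\cdots UD^{\gamma_k'}$ with $\gamma_i$ the sum of column $k-i+1$ and $\gamma_i'$ the sum of row $i$ of $M(\pi)$, as in the paragraph before equation~\eqref{Eq5}. The hypothesis $\DL_k(\pi)\in\Int(\mathcal L_k^T)$ means $\Lambda\leq_T\Lambda'$, which by Definition~\ref{Def2} translates into an inequality on longevities: $\lon_j(\Lambda)\le\lon_j(\Lambda')$ for all $j$. I would unwind what $\lon_j(\Lambda)\le\lon_j(\Lambda')$ says about the partial sums of the $\gamma$'s versus the $\gamma'$'s — roughly, that no ``excess'' of points accumulated in the lower-left of $M(\pi)$ is too far from where the row sums would place it. Geometrically, the $\gamma_i$ record how many non-left-to-right-maximum points sit in the vertical strip between consecutive left-to-right maxima $\mathfrak R_{i-1},\mathfrak R_i$ (reading appropriately), and the $\gamma_i'$ record how many sit in the horizontal band. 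The operator $\swr$, acting on the $312$-avoiding plot, pushes each ``low'' point rightward until it is directly below an appropriate left-to-right maximum; the Tamari condition is exactly what guarantees that after this pushing, the descent tops of $\swr(\pi)$ still have enough left-to-right maxima to their upper right to serve as hook endpoints.

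The cleanest route is probably induction on $k$ using the recursive structure: since $\pi$ avoids $312$ we can write $\pi=\lambda\oplus(\mu\ominus 1)$ with $\lambda,\mu\in\Av(312)$, and by the recursive definition of $\swr$ (analogous to that of $\swu$, as used in the proof of Lemma~\ref{Lem1}) we have $\swr(\pi)=\swr(\mu)\ominus(\swr(\lambda)\oplus 1)$. I would track how the interval $\DL_k(\pi)$ decomposes under this splitting — the ``$1$'' at the top of the $\oplus$ in $\pi$ corresponds to the global maximum $\mathfrak R_0=(n,n)$ and the associated first $U$ step of each of $\Lambda,\Lambda'$ — and show that the Tamari condition on $(\Lambda,\Lambda')$ restricts to Tamari conditions on the sub-intervals coming from $\lambda$ and from $\mu$, together with one inequality relating them that is precisely what is needed to glue the two canonical hook configurations and attach the hook whose southwest endpoint is the descent top created by the $\ominus 1$. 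Then Lemma~\ref{Lem6} (applied after the induction hypothesis, or used directly since $\swr(\mu)$ and $\swr(\lambda)$ will be sorted) plus Lemma~\ref{Lem1} finishes each piece. The deficiency function $\de$ from the proof of Lemma~\ref{Lem6}, together with the additivity relations~\eqref{Eq10} and~\eqref{Eq17}, is the natural bookkeeping device: I expect to show $\de(\swr(\pi))=0$ by bounding it via $\de(\swr(\lambda))$ and $\de(\swr(\mu))$ and the gluing inequality extracted from $\Lambda\leq_T\Lambda'$.

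The main obstacle will be the gluing step: making precise the dictionary between the longevity inequalities $\lon_j(\Lambda)\le\lon_j(\Lambda')$ and the statement that, after $\swr$ relocates the low points, each descent top $(d_i,\pi_{d_i})$ of $\swr(\pi)$ has at least as many left-to-right maxima of $\swr(\pi)$ weakly to its upper-right as it ``needs.'' The difficulty is that $\swr$ genuinely moves points, so the left-to-right maxima of $\swr(\pi)$ are not literally those of $\pi$; one must argue that $\swr$ moves each non-maximal point only within a controlled horizontal range (it cannot cross the left-to-right maximum immediately below which it will land, by definition of $\swr_i$ and the $312$-avoidance), and that the Tamari condition bounds exactly how far back a descent top can be relative to the maxima it must reach. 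Once that combinatorial dictionary is nailed down, the rest is the routine recursive bookkeeping with $\oplus$, $\ominus$, $\de$, and the recursive formula for $\swr$.
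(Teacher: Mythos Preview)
Your plan is genuinely different from the paper's proof, but it has a structural gap that would need to be fixed before it could work. The paper argues the contrapositive: assume $\swr(\pi)$ is not sorted, locate the first index $i$ for which $\pi'=\swr_{i+1}\circ\cdots\circ\swr_n(\pi)$ is sorted while $\pi''=\swr_i(\pi')$ is not, and then from the geometry of that specific failure extract an index $j$ with $\lon_j(\Lambda)>\lon_j(\Lambda')$. No induction is used; the argument is a one-shot localization of the obstruction, with deficiency and a hook-counting quantity $Q(\cdot)$ doing the bookkeeping.

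The gap in your approach is that the recursive decomposition you propose degenerates on the objects in question. For $\pi\in\mathcal U_{2k+1}(312)$ one has $\pi_{2k+1}=2k+1$, so in $\pi=\lambda\oplus(\mu\ominus 1)$ the last entry forces $\mu=\emptyset$ and the recursion collapses to $\pi=\lambda\oplus 1$ with $\lambda$ of even length, hence not uniquely sorted. So there is no sub-$\DL$ to speak of, and the phrase ``the Tamari condition on $(\Lambda,\Lambda')$ restricts to Tamari conditions on the sub-intervals coming from $\lambda$ and from $\mu$'' has no evident meaning. You could try to iterate the decomposition inside $\lambda$, but then you are no longer inducting on uniquely sorted permutations, and the longevity sequences of $\Lambda,\Lambda'$ (which encode the column and row sums of $M(\pi)$ around the \emph{global} left-to-right maxima $\mathfrak R_0,\ldots,\mathfrak R_k$) do not factor through these $\oplus/\ominus$ splits in any simple way. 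In particular, the single ``gluing inequality'' you anticipate does not materialize: a Tamari violation can span several of these blocks (this is exactly what the paper's indices $j$ and $j+m$ capture).

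You correctly identify the gluing step as the crux, but that step \emph{is} the proposition; everything else (Lemma~\ref{Lem1}, Lemma~\ref{Lem5}, the deficiency relations~\eqref{Eq10} and~\eqref{Eq17}) is available in both approaches. If you want to salvage a direct argument, you would need a decomposition aligned with the left-to-right maxima of $\pi$ (i.e.\ with the columns of $M(\pi)$), and a precise statement of how the longevity inequalities constrain the placement of points after $\swr$. The paper sidesteps this by arguing contrapositively from the first failure of sortedness along $\swr_n,\swr_{n-1},\ldots$, which pinpoints the offending region of the plot and yields the Tamari violation without any recursion.
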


\begin{proof}
We prove the contrapositive. Assume $\swr(\pi)$ is not sorted. Let $n=2k+1$. Since $\pi$ is sorted and $\swr(\pi)=\swr_1\circ\cdots\circ\swr_n(\pi)$ by definition, there exists $i\in[n]$ such that the permutation $\pi':=\swr_{i+1}\circ\cdots\circ\swr_n(\pi)$ is sorted while $\pi'':=\swr_i\circ\cdots\circ\swr_n(\pi)=\swr_i(\pi')$ is not. Write $\pi'=\pi_1'\cdots\pi_n'$ and $\pi''=\pi_1''\cdots\pi_n''$. Let $a=\pi_i'$, and let $\ell\in[n]$ be such that $\pi_\ell=a$. Because $\pi$ avoids $312$, its plot has the shape shown in Figure \ref{Fig10}. Indeed, all of the points to the northwest of $(\ell,a)$ (the points in $\lambda$) must appear to the right of the points to the southwest of $(\ell,a)$ (the points in $\mu$) since $a$ cannot form the last entry in a $312$ pattern in $\pi$. Similarly, the points in $\sigma$ must all appear above the points in $\lambda$ since $a$ cannot be the smallest entry in a $312$ pattern in $\pi$. Using the definitions of the maps $\swr_{i+1},\ldots,\swr_n$, we find that the shape of $\pi'$ is as shown in Figure \ref{Fig10}. For example, all of the points in $\mu'$ are higher than all of the points in $\tau'$ because there cannot be any points to the right of $(i,a)$ in the plot of $\pi'$ that form the rightmost entry in a $132$ pattern (by Remark \ref{Rem3}). Also, every time we apply one of the maps $\swr_{i+1},\ldots,\swr_n$, the points above $\lambda$ either end up to the right of $\lambda$ or end up to the left of all of the points in and below $\lambda$. Finally, it follows from the definition of $\swr_i$ that $\pi''$ has the shape shown in Figure \ref{Fig10}. The boxes in these diagrams are meant to represent places where there \emph{could} be points, but boxes could be empty. 

\begin{figure}[h]
\begin{center}
\includegraphics[width=\linewidth]{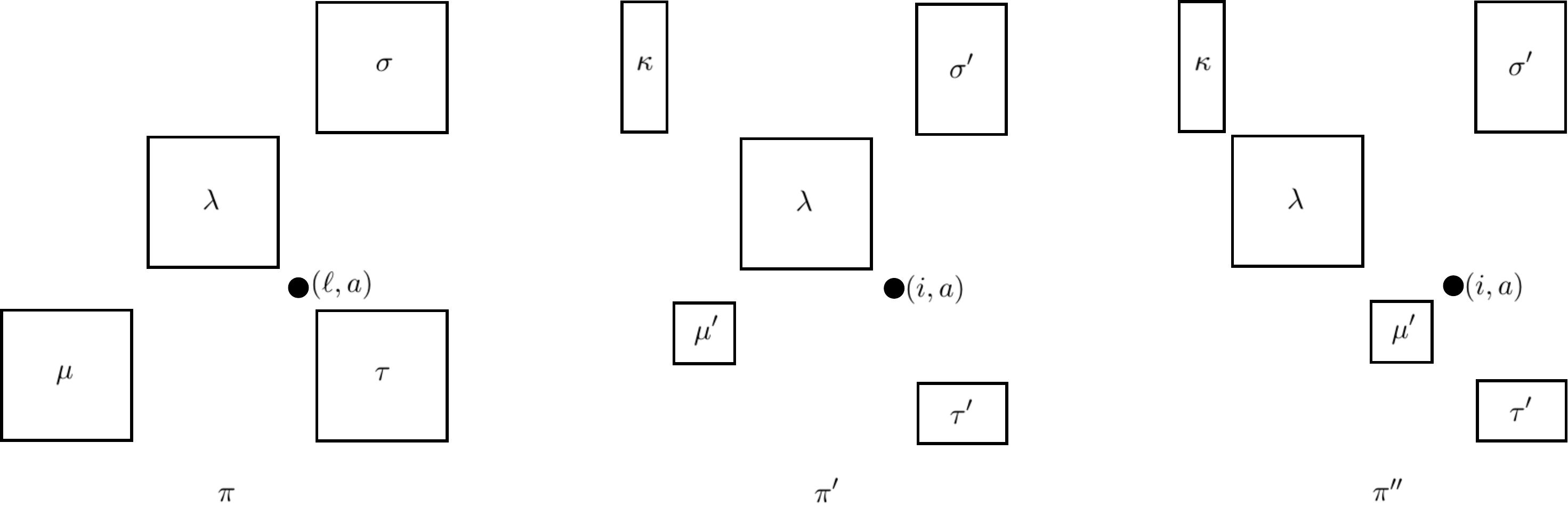}
\caption{The shapes of the plots of $\pi,\pi',\pi''$.}
\label{Fig10}
\end{center}  
\end{figure}

Because $\pi'$ is sorted, it has a canonical hook configuration $\mathcal H'$ by Proposition \ref{Prop1}. Let $Q(\lambda)$ (respectively, $Q(\sigma')$) be the number of hooks in $\mathcal H'$ with northeast endpoints in $\lambda$ (respectively, $\sigma'$) whose southwest endpoints are not in $\lambda$ (respectively, $\sigma'$). Recall the definition of the deficiency statistic $\de$ from the proof of Lemma \ref{Lem6}. Again, one can think of $\de(\xi)$ as the number of descent tops in the plot of $\xi$ that cannot find northeast endpoints within $\xi$ for their hooks. Every southwest endpoint counted by $\de(\lambda)$, $\de(\mu')$, or $\de(\kappa)$ belongs to a hook whose northeast endpoint is counted by either $Q(\lambda)$ or $Q(\sigma')$. To justify this, we need to see that $(i,a)$ is not the northeast endpoint of a hook in $\mathcal H'$. This follows from the second bullet in Remark \ref{Rem4} and the fact that $\lambda$ is nonempty (because $\pi'$ and $\pi''$ are distinct). Therefore, $Q(\sigma')+Q(\lambda)\geq \de(\lambda)+\de(\mu')+\de(\kappa)$. Hence, $Q(\lambda)\geq \de(\lambda)+\de(\mu')+\de(\kappa)-Q(\sigma')$. 

When we try to construct the canonical hook configuration of $\pi''$, we must fail at some point because $\pi''$ is not sorted. The plots of $\pi'$ and $\pi''$ are the same to the right of the point $(i,a)$, so this failure must occur when we try to find the northeast endpoint of a hook whose southwest endpoint is in $\lambda$, $\mu'$, or $\kappa$. All choices for these northeast endpoints are either $(i,a)$ or are in $\sigma'$, and the choices in $\sigma'$ contain all of the points in $\sigma'$ that are counted by $Q(\sigma')$. It follows that $Q(\sigma')+1<\de(\lambda)+\de(\mu')+\de(\kappa)$. Using the last line from the preceding paragraph, we find that $Q(\lambda)\geq \de(\lambda)+\de(\mu')+\de(\kappa)-Q(\sigma')>1$. Therefore, $Q(\lambda)\geq 2$. By the definition of $Q(\lambda)$, there are at least two points in $\lambda$ that are northeast endpoints of hooks in $\mathcal H'$ and whose southwest endpoints are not in $\lambda$. These points (after they have been slid horizontally) are still northeast endpoints of hooks in the canonical hook configuration $\mathcal H$ of $\pi$. Indeed, this is a consequence of Lemma \ref{Lem2} because $\pi$ is uniquely sorted and these points are left-to-right maxima of the plot of $\pi$. In the plot of $\pi$, the hooks with these two points as northeast endpoints must have southwest endpoints that are not in $\lambda$. 

Every time we mention hooks, southwest endpoints, or northeast endpoints in the remainder of the proof, we refer to those of the canonical hook configuration $\mathcal H$ of $\pi$. Note that $\pi_n=n$ because $\pi$ is sorted. Lemma \ref{Lem4} tells us that the left-to-right maxima of the plot of $\pi$ are $(1,\pi_1),\mathfrak N_1,\mathfrak N_2,\ldots,\mathfrak N_k$, where $\mathfrak N_1,\ldots,\mathfrak N_k$ are the northeast endpoints. Let $\mathfrak R_0,\ldots,\mathfrak R_k$ be these left-to-right maxima, written in order from right to left (so $\mathfrak R_0=(n,n)$ and $\mathfrak R_k=(1,\pi_1)$). Let $\DL_k(\pi)=(\Lambda,\Lambda')$, where $\Lambda=UD^{\gamma_1}UD^{\gamma_2}\cdots UD^{\gamma_k}$ and $\Lambda'=UD^{\gamma_1'}UD^{\gamma_2'}\cdots UD^{\gamma_k'}$. Because $\pi$ avoids $312$, the points lying horizontally between $\mathfrak R_{i-1}$ and $\mathfrak R_i$ are decreasing in height from left to right for every $i\in[k]$. Similarly, the points lying vertically between $\mathfrak R_i$ and $\mathfrak R_{i+1}$ are decreasing in height from left to right for every $i\in[k]$. This implies that $\gamma_i$ is the number of points lying horizontally between $\mathfrak R_{i-1}$ and $\mathfrak R_i$, while $\gamma_i'$ is the number of points lying vertically between $\mathfrak R_i$ and $\mathfrak R_{i+1}$ (we make the convention that all points ``lie above $\mathfrak R_{k+1}$," even though $\mathfrak R_{k+1}$ is not actually a point that we have defined).

We saw above that there are (at least) two points in $\lambda$ that are northeast endpoints of hooks whose southwest endpoints are not in $\lambda$. Their southwest endpoints must be in $\mu$. Since these points are northeast endpoints, they are $\mathfrak R_{j-1}$ and $\mathfrak R_{j+m-1}$ for some $j\in[k]$ and $m\geq 1$. We may assume that we have chosen these points as far left as possible. In particular, $\mathfrak R_{j+m-1}$ is the leftmost point in $\lambda$. 

For $r\in\{j,\ldots,j+m-1\}$, let $\zeta^{(r)}$ be the permutation whose plot is the portion of $\lambda$ obtained by deleting everything to the left of $\mathfrak R_r$ and everything equal to or to the right of $\mathfrak R_{j-1}$. Because none of the hooks with southwest endpoints in $\lambda$ have $\mathfrak R_{j-1}$ as their northeast endpoints, all of the southwest endpoints in $\zeta^{(r)}$ belong to hooks whose northeast endpoints are in $\zeta^{(r)}$. When $r=j$, this implies that there are no points horizontally between $\mathfrak R_{j-1}$ and $\mathfrak R_j$. Thus, $\gamma_j=0$. When $r=j+1$, this implies that there is at most one point other than $\mathfrak R_j$ that lies horizontally between $\mathfrak R_{j-1}$ and $\mathfrak R_{j+1}$. Thus, $\gamma_j+\gamma_{j+1}\leq 1$. Continuing in this way, we find that $\gamma_j+\cdots+\gamma_{j+v}\leq v$ for all $v\in\{0,\ldots,m-1\}$. Referring to Definition \ref{Def3}, we find that $\lon_j(\Lambda)\geq m$.  

Now recall that we chose the point $\mathfrak R_{j-1}$ as far left as possible subject to the conditions that it is not the leftmost point in $\lambda$ and that it is the northeast endpoint of a hook whose southwest endpoint is in $\mu$. When the northeast endpoint of this hook was determined in the canonical hook configuration construction, we did not choose any of the points $\mathfrak R_j,\ldots,\mathfrak R_{j+m-2}$. This means that we \emph{could not} have chosen any of these points, so they must have already been northeast endpoints of other hooks. These other hooks must all have their southwest endpoints in $\lambda$ (since we chose $\mathfrak R_{j-1}$ as far left as possible). These southwest endpoints are descent tops, and the corresponding descent bottoms are not left-to-right maxima. This tells us that there are at least $m-1$ points other than $\mathfrak R_{j+1},\ldots,\mathfrak R_{j+m-2}$ that lie horizontally between $\mathfrak R_j$ and $\mathfrak R_{j+m-1}$. All of these points are in $\lambda$, so they must lie above $\mathfrak R_{j+m}$ and below $\mathfrak R_{j+1}$. This forces $\gamma_{j+1}'+\cdots+\gamma_{j+m-1}'\geq m-1$. However, $(\ell,a)$ is another point that lies above $\mathfrak R_{j+m}$ and below $\mathfrak R_{j+1}$, so we actually have $\gamma_{j+1}'+\cdots+\gamma_{j+m-1}'>m-1$. Since $\gamma_j'\geq 0$, this means that $\gamma_j'+\cdots+\gamma_{j+m-1}'>m-1$. According to Definition \ref{Def3}, $\lon_j(\Lambda')\leq m-1$. We have seen that $\lon_j(\Lambda)\geq m$, so it is immediate from the definition of the Tamari lattice $\mathcal L_k^T$ that $(\Lambda,\Lambda')\not\in\Int(\mathcal L_k^T)$.  
\end{proof}

\begin{proposition}\label{Prop3}
If $\pi\in\mathcal U_{2k+1}(312)$ is such that $\swr(\pi)$ is sorted, then $\DL_k(\pi)\in\Int(\mathcal L_k^T)$. 
\end{proposition}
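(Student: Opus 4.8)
We prove the contrapositive: assuming that $\DL_k(\pi)=(\Lambda,\Lambda')$ does not lie in $\Int(\mathcal L_k^T)$, we show that $\swr(\pi)$ is not sorted. Since we already know that $\Lambda\leq_S\Lambda'$, the failure of $\Lambda\leq_T\Lambda'$ means, by Definition~\ref{Def2}, that there is an index $j\in[k]$ with $\lon_j(\Lambda)>\lon_j(\Lambda')$. The plan is to fix such a $j$ — chosen extremally, say with $\lon_j(\Lambda)$ as large as possible — and set $m=\lon_j(\Lambda')+1$, so that $1\leq m\leq\lon_j(\Lambda)$. Writing $\Lambda=UD^{\gamma_1}\cdots UD^{\gamma_k}$ and $\Lambda'=UD^{\gamma_1'}\cdots UD^{\gamma_k'}$, Definition~\ref{Def3} then yields
\[\gamma_j+\gamma_{j+1}+\cdots+\gamma_{j+v}\leq v\quad\text{for }0\leq v\leq m-1,\qquad\text{and}\qquad\gamma_j'+\gamma_{j+1}'+\cdots+\gamma_{j+m-1}'\geq m.\]
These are precisely inequalities of the shape that were produced, in the reverse direction, near the end of the proof of Proposition~\ref{Prop2}; our task is to run that argument backwards.

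First I would reinterpret the displayed inequalities geometrically, exactly as in the proof of Proposition~\ref{Prop2}: listing the left-to-right maxima of the plot of $\pi$ as $\mathfrak R_0,\ldots,\mathfrak R_k$ from right to left (via Lemma~\ref{Lem4}), $\gamma_i$ is the number of points of $\pi$ lying horizontally between $\mathfrak R_{i-1}$ and $\mathfrak R_i$ and $\gamma_i'$ is the number lying vertically between $\mathfrak R_i$ and $\mathfrak R_{i+1}$ (these identifications use that $\pi$ avoids $312$, together with Lemma~\ref{Lem2}). So the first family of inequalities says that for each $v\leq m-1$ the horizontal strip bounded by $\mathfrak R_{j-1}$ and $\mathfrak R_{j+v}$ contains at most $v$ descent bottoms, while the last inequality says that the vertical strip bounded by $\mathfrak R_j$ and $\mathfrak R_{j+m}$ contains at least $m$ descent bottoms: there is a region of the plot that is ``horizontally thin'' but ``vertically thick.'' Here $\mathfrak R_{j+m-1}$, the lowest left-to-right maximum of $\pi$ whose height exceeds that of $\mathfrak R_{j+m}$, is meant to play the role of the leftmost point of the region $\lambda$ in the proof of Proposition~\ref{Prop2}, and $\mathfrak R_{j-1}$ the role of the rightmost northeast endpoint of a hook arriving from the left.

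The heart of the argument is to convert this ``thin but thick'' configuration into a failure of sortedness, i.e.\ to show $\de(\swr(\pi))\geq 1$ for the deficiency statistic from the proof of Lemma~\ref{Lem6}; this is exactly what it means for $\swr(\pi)$ not to be sorted. The operators $\swr_n,\swr_{n-1},\ldots,\swr_1$ slide, for each position in turn, the points southwest of that position to the right, so I would track the portion of the plot determined by $\mathfrak R_{j-1},\mathfrak R_j,\ldots,\mathfrak R_{j+m}$ through these slides and show that in $\swr(\pi)$ the $m$-or-more descent bottoms counted by $\gamma_j'+\cdots+\gamma_{j+m-1}'$ give rise to descent tops that have been pushed into a band in which, by $\gamma_j+\cdots+\gamma_{j+v}\leq v$, there are strictly fewer available left-to-right maxima than descent tops competing for them as northeast endpoints. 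In other words, one reverses the deficiency bookkeeping from the last part of the proof of Proposition~\ref{Prop2} — where two such northeast endpoints were located and the inequalities $\gamma_j+\cdots+\gamma_{j+v}\leq v$ and $\gamma_j'+\cdots+\gamma_{j+m-1}'>m-1$ were read off — to produce the required surplus of descent tops. An alternative to tracking the whole region through all the operators $\swr_i$ simultaneously would be to induct on $n$ using the recursion $\swr(\pi)=\swr(\mu)\ominus(\swr(\lambda)\oplus 1)$ (valid when $\pi=\lambda\oplus(\mu\ominus 1)$ avoids $312$) together with the deficiency inequalities \eqref{Eq10} and \eqref{Eq17}, isolating the first step of the recursion at which the thin-but-thick region appears.

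I expect the main obstacle to be exactly this last bookkeeping: one must keep careful track of where the descent tops at heights between $\mathfrak R_{j+m}$ and $\mathfrak R_j$ are slid to by $\swr_n,\ldots,\swr_1$, and verify that after all the slides the canonical hook configuration construction for $\swr(\pi)$ really does run out of northeast endpoints. Choosing $j$ with $\lon_j(\Lambda)$ maximal makes the inequalities $\gamma_j+\cdots+\gamma_{j+v}\leq v$ tight enough for $v<m$ to pin down the positions of $\mathfrak R_j,\ldots,\mathfrak R_{j+m-1}$ and should prevent left-to-right maxima outside the relevant band from interfering. Once $\de(\swr(\pi))\geq 1$ is established, $\swr(\pi)$ is not sorted, which is exactly the contrapositive of the claim; combined with Proposition~\ref{Prop2}, this is what is needed to upgrade the map $\DL_k$ of Theorem~\ref{Thm2} to the announced bijection onto $\Int(\mathcal L_k^T)$.
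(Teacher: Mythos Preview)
Your overall strategy --- prove the contrapositive, locate an index $j$ with $\lon_j(\Lambda)>\lon_j(\Lambda')$, translate the resulting longevity inequalities into a statement about the left-to-right maxima $\mathfrak R_j,\ldots,\mathfrak R_{j+m}$, and then show the canonical hook configuration of $\swr(\pi)$ cannot be built --- matches the paper's. But what you have written is a plan, not a proof, and the places you flag as ``bookkeeping'' are exactly where the substantive ideas live.

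Two concrete gaps. First, your choice $m=\lon_j(\Lambda')+1$ is purely combinatorial; it does not guarantee that the points you want to track form a block that is contiguous both horizontally and vertically, and without that you cannot follow the region through the operators $\swr_n,\ldots,\swr_1$. The paper instead defines $m$ geometrically: take $(\ell,a)$ to be the highest point lying southeast of $\mathfrak R_j$, and let $\mathfrak R_{j+m}$ be the leftmost left-to-right maximum above $(\ell,a)$. Then the block $\lambda$ spanned by $\mathfrak R_j,\ldots,\mathfrak R_{j+m}$ really is a contiguous horizontal and vertical interval (this uses $312$-avoidance), and one checks afterwards that $\lon_j(\Lambda')\geq m$, which then gives $|\lambda|\leq 2m+1$ via $\gamma_j+\cdots+\gamma_{j+m}\leq m$. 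Your $m$ is in general larger than the paper's, and the corresponding region need not be a block. The extremal choice of $j$ is also unnecessary; any $j$ with $\lon_j(\Lambda)>\lon_j(\Lambda')$ works once $m$ is defined geometrically.

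Second, the contradiction is not obtained by directly exhibiting a surplus of descent tops in $\swr(\pi)$, but by a parity-style count that uses \emph{unique} sortedness. Since $\des(\swr(\pi))=\des(\pi)=k$ by Lemma~\ref{Lem1}, if $\swr(\pi)$ were sorted it would be uniquely sorted (Theorem~\ref{Thm1}), and then Lemma~\ref{Lem2} partitions its non-initial points into descent bottoms and northeast endpoints. The paper shows that in $\swr(\pi)$ the block $\swr(\lambda)$ sits with a single point immediately to its upper right and nothing to its southwest; letting $\mathcal E$ be $\swr(\lambda)$ together with that extra point, one bounds $|\mathcal E|$ by $2\des(\swr(\lambda))+1\leq|\lambda|$ (using Lemma~\ref{Lem1} again on $\lambda$), contradicting $|\mathcal E|=|\lambda|+1$. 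Neither the inductive route via \eqref{Eq10}, \eqref{Eq17} nor a direct deficiency estimate readily produces this count; the block structure and Lemma~\ref{Lem2} are doing the work.
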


\begin{proof}
Let $\DL_k(\pi)=(\Lambda,\Lambda')$, where $\Lambda=UD^{\gamma_1}UD^{\gamma_2}\cdots UD^{\gamma_k}$ and $\Lambda'=UD^{\gamma_1'}UD^{\gamma_2'}\cdots UD^{\gamma_k'}$. We are going to prove the contrapositive of the proposition, so assume $(\Lambda,\Lambda')\not\in\Int(\mathcal L_k^T)$. This means that there exists $j\in[k]$ such that $\lon_j(\Lambda)>\lon_j(\Lambda')$. As in the proof of the previous proposition, we let $\mathfrak R_0,\ldots,\mathfrak R_k$ be the left-to-right maxima of the plot of $\pi$ listed in order from right to left. Let $(\ell,a)$ be the highest point in the plot of $\pi$ that appears to the southeast of $\mathfrak R_j$. Let $\mathfrak R_{j+m}$ be the leftmost point that is higher than $(\ell,a)$. Using the assumption that $\pi$ avoids $312$, we find that the plot of $\pi$ has the following shape\footnote{The figure is drawn to make it look as though $m\geq 1$, but it is possible to have $m=0$. In this case, $\lambda$ consists of the single point $\mathfrak R_j$.}: \[\includegraphics[width=.35\linewidth]{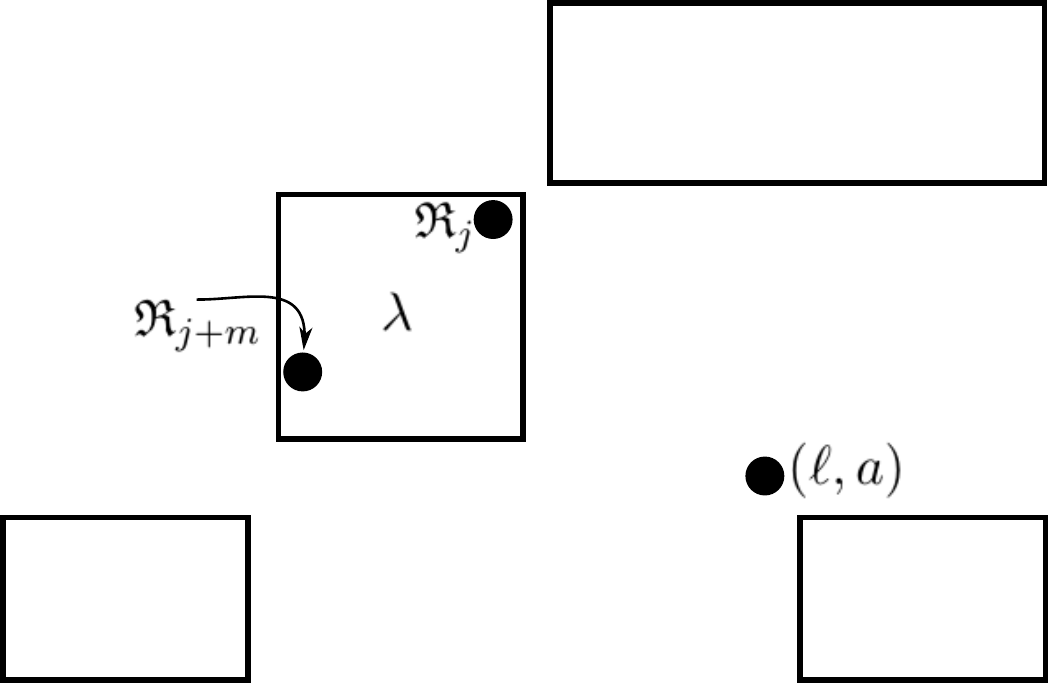}\] The image is meant to indicate that $\mathfrak R_j$ is the highest and the rightmost point in $\lambda$ and that $\mathfrak R_{j+m}$ is the leftmost point in $\lambda$. The boxes in this diagram represent places where there could be points, but boxes could be empty. 

Let $t=\lon_j(\Lambda)$ and $t'=\lon_j(\Lambda')$. Since $t>t'\geq 0$, it follows from Definition \ref{Def3} that $\gamma_j=0$ and 
\begin{equation}\label{Eq6}
\gamma_j+\cdots+\gamma_{j+t'}\leq t'<\gamma_j'+\cdots+\gamma_{j+t'}'.
\end{equation} 
Now, $\gamma_j+\cdots+\gamma_{j+t'}$ is the number of points in the plot of $\pi$ other than $\mathfrak R_j,\ldots,\mathfrak R_{j+t'-1}$ lying horizontally between $\mathfrak R_{j-1}$ and $\mathfrak R_{j+t'}$. Since $\gamma_j=0$, this is actually the same as the number of points in the plot of $\pi$ other than $\mathfrak R_{j+1},\ldots,\mathfrak R_{j+t'-1}$ lying horizontally between $\mathfrak R_j$ and $\mathfrak R_{j+t'}$. Similarly, $\gamma_j'+\cdots+\gamma_{j+t'}'$ is the number of points other than $\mathfrak R_{j+1},\ldots,\mathfrak R_{j+t'}$ lying vertically between $\mathfrak R_j$ and $\mathfrak R_{j+t'+1}$. If $t'\leq m-1$, then all of these points counted by $\gamma_j'+\cdots+\gamma_{j+t'}'$  are in $\lambda$. In fact, they all lie horizontally between $\mathfrak R_j$ and $\mathfrak R_{j+t'}$, so they are among the points counted by $\gamma_j+\cdots+\gamma_{j+t'}$. This contradicts \eqref{Eq6}, so we must have $t'\geq m$. This means that $t\geq m+1$, so it follows from Definition \ref{Def3} that $\gamma_j+\cdots+\gamma_{j+m}\leq m$. The points in the plot of $\pi$ other than $\mathfrak R_{j+1},\ldots,\mathfrak R_{j+m-1}$ that lie horizontally between $\mathfrak R_j$ and $\mathfrak R_{j+m}$ are all in $\lambda$. Letting $|\lambda|$ denote the number of points in $\lambda$, we find that  
\begin{equation}\label{Eq7}
|\lambda|=\gamma_j+\cdots+\gamma_{j+m}+m+1\leq 2m+1.
\end{equation}
The $m+1$ points $\mathfrak R_j,\ldots,\mathfrak R_{j+m}$, which lie in $\lambda$, are not descent bottoms in the plot of $\lambda$, so it follows from \eqref{Eq7} that $\des(\lambda)\leq|\lambda|-(m+1)\leq(|\lambda|-1)/2$. We know that $\lambda$ avoids $312$ because $\pi$ does, so we can use Lemma \ref{Lem1} to see that 
\begin{equation}\label{Eq8}
\des(\swr(\lambda))\leq(|\lambda|-1)/2.
\end{equation}

We now check that $\swr(\pi)$ has the following shape: \[\includegraphics[width=.35\linewidth]{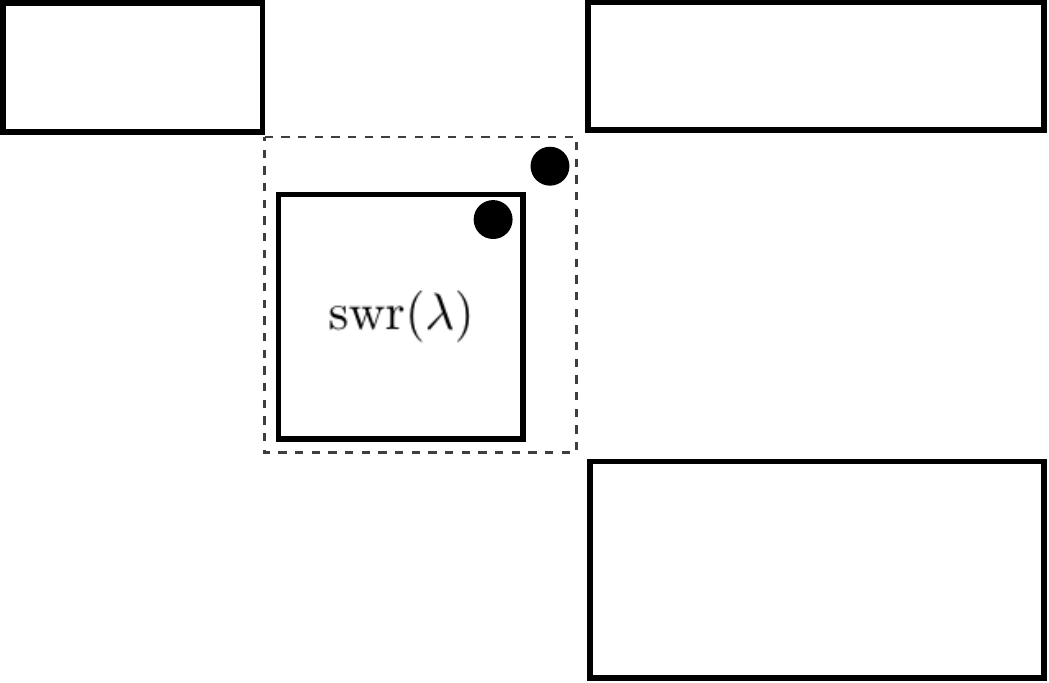}\] To see this, imagine applying the composition $\swr_1\circ\cdots\circ\swr_n$ to $\pi$. It is helpful to identify points with their heights and imagine sliding these points around horizontally. The points in the block $\lambda$ form a contiguous horizontal interval and a contiguous vertical interval in the plot of $\pi$, and none of the maps $\swr_i$ will change this fact. Note that these maps change the block from the plot of $\lambda$ to the plot of $\swr(\lambda)$. At some point during the sliding process, one of the maps $\swr_i$ will cause all of the points to the southwest of the point with height $a$ to move to the right of the points to the northwest of the point with height $a$. This causes all of the points southwest of the block to move to the southeast of the block. None of the maps $\swr_i$ can slide points below the block to the left of the block, which is why all of the points lower than the block in the plot of $\swr(\pi)$ appear to the southeast of the block. If $v$ is the height of $\mathfrak R_j$, then there is a point with height $v+1$ (because $j\geq 1$). The point with height $v+1$ appears to the right of $\mathfrak R_j$ in the plot of $\pi$, and none of the maps $\swr_i$ can move the point with height $v+1$ to the left of $\mathfrak R_j$. Furthermore, there are no points in the plot of $\swr(\pi)$ lying horizontally between a point in the block and the point with height $v+1$ because $\swr(\pi)$ avoids $132$ (by Lemma \ref{Lem5}). This is why the point with height $v+1$ is immediately to the right of the point with height $v$ in the plot of $\swr(\pi)$.

Because $\pi\in\mathcal U_{2k+1}(312)$, we know from Theorem \ref{Thm1} and Lemma \ref{Lem1} that $\des(\swr(\pi))=\des(\pi)=k$. Our goal is to show that $\swr(\pi)$ is not sorted, so suppose by way of contradiction that it is sorted. Theorem \ref{Thm1} tells us that $\swr(\pi)$ is uniquely sorted. Let $\mathfrak N_1',\ldots,\mathfrak N_k'$ be the northeast endpoints of the hooks in the canonical hook configuration of $\swr(\pi)$. Let $\DB(\swr(\pi))$ be the set of descent bottoms in the plot of $\swr(\pi)$. Let $\mathfrak Q=(1,\swr(\pi)_1)$ be the leftmost point in the plot of $\swr(\pi)$. According to Lemma \ref{Lem2}, the sets $\DB(\swr(\pi))$, $\{\mathfrak N_1',\ldots,\mathfrak N_k'\}$, and $\{\mathfrak Q\}$ form a partition of the set of points in the plot of $\swr(\pi)$. 

Now refer back to the above image of the plot of $\swr(\pi)$. Let $\mathcal E$ be the set of points lying in the dotted region. It is possible that there is a point in this plot lying to the northwest of all of the points in $\mathcal E$. It is also possible that there is no such point. In either case, 
\begin{equation}\label{Eq9}
|(\DB(\swr(\pi))\cup\{\mathfrak Q\})\cap\mathcal E|\leq\des(\swr(\lambda))+1.
\end{equation} Now note that there are no points in the plot of $\swr(\pi)$ lying to the southwest of the dotted region. This means that every element of $\{\mathfrak N_1',\ldots,\mathfrak N_k'\}\cap\mathcal E$ is the northeast endpoint of a hook (in the canonical hook configuration of $\swr(\pi)$) whose southwest endpoint is in the block labeled $\swr(\lambda)$. Hence, $|\{\mathfrak N_1',\ldots,\mathfrak N_k'\}\cap\mathcal E|$ is at most the number of southwest endpoints that lie in this block. Now recall from the first bullet in Remark \ref{Rem4} that the southwest endpoints of hooks are precisely the descent tops in the plot. It follows that $|\{\mathfrak N_1',\ldots,\mathfrak N_k'\}\cap\mathcal E|\leq\des(\swr(\lambda))$. Combining this observation with \eqref{Eq8} and \eqref{Eq9} yields \[|\mathcal E|=|(\DB(\swr(\pi))\cup\{\mathfrak Q\})\cap\mathcal E|+|\{\mathfrak N_1',\ldots,\mathfrak N_k'\}\cap\mathcal E|\leq\des(\swr(\lambda))+1+\des(\swr(\lambda))\leq|\lambda|.\] This is our desired contradiction because $|\mathcal E|=|\lambda|+1$. 
\end{proof}

\begin{theorem}\label{Thm4}
For each nonnegative integer $k$, the map $\DL_k\circ\swl:\mathcal U_{2k+1}(132)\to\Int(\mathcal L_k^T)$ is a bijection. 
\end{theorem}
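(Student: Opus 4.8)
The plan is to assemble the theorem from the machinery already in place, principally Theorem~\ref{Thm2} together with Propositions~\ref{Prop2} and \ref{Prop3}, and the mutually inverse sliding operators of Section~\ref{Sec:Operators}. The argument divides into three parts: well-definedness of $\DL_k\circ\swl$ (including the fact that its image really lands in $\Int(\mathcal L_k^T)$ rather than merely $\Int(\mathcal L_k^S)$), injectivity, and surjectivity.

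First I would verify well-definedness, which is exactly the point flagged in the footnote. Fix $\pi\in\mathcal U_{2k+1}(132)$. By Lemma~\ref{Lem5}, $\swl(\pi)\in\Av_{2k+1}(312)$; moreover $\swl$ preserves length, and Lemma~\ref{Lem1} together with Theorem~\ref{Thm1} gives $\des(\swl(\pi))=\des(\pi)=k$, while Lemma~\ref{Lem6} shows $\swl(\pi)$ is sorted. Theorem~\ref{Thm1} then yields $\swl(\pi)\in\mathcal U_{2k+1}(312)$, so $\DL_k(\swl(\pi))$ is defined and lies in $\Int(\mathcal L_k^S)$ by Theorem~\ref{Thm2}. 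To promote this to $\Int(\mathcal L_k^T)$, note that $\swr(\swl(\pi))=\pi$ since $\swl$ and $\swr$ are mutually inverse on the relevant classes (Lemma~\ref{Lem5}), so $\swr(\swl(\pi))$ is sorted; applying Proposition~\ref{Prop3} to $\swl(\pi)$ gives $\DL_k(\swl(\pi))\in\Int(\mathcal L_k^T)$, as desired.

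Injectivity is then immediate: the computation above shows $\swl$ restricts to a bijection $\mathcal U_{2k+1}(132)\to\mathcal U_{2k+1}(312)$, and $\DL_k$ is injective by Theorem~\ref{Thm2}. For surjectivity, take $(\Lambda,\Lambda')\in\Int(\mathcal L_k^T)$. Since $\mathcal L_k^S$ is an extension of $\mathcal L_k^T$ (the Bernardi--Bonichon result recalled in the introduction), we have $(\Lambda,\Lambda')\in\Int(\mathcal L_k^S)$, so Theorem~\ref{Thm2} produces $\sigma\in\mathcal U_{2k+1}(312)$ with $\DL_k(\sigma)=(\Lambda,\Lambda')$. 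Proposition~\ref{Prop2} then gives that $\swr(\sigma)$ is sorted, and arguing as before (Lemmas~\ref{Lem5} and \ref{Lem1}, Theorem~\ref{Thm1}) one checks $\swr(\sigma)\in\mathcal U_{2k+1}(132)$. Because $\swl(\swr(\sigma))=\sigma$, we conclude $(\DL_k\circ\swl)(\swr(\sigma))=(\Lambda,\Lambda')$, so the map is onto.

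I do not expect a serious obstacle here: the technical heart of the section is Propositions~\ref{Prop2} and \ref{Prop3}, which are assumed. The only points needing care are (i) confirming that $\swl$ preserves unique sortedness among $132$-avoiders, which relies on the descent count in Theorem~\ref{Thm1} matching the ``sorted plus $(n-1)/2$ descents'' criterion, and (ii) using the extension $\mathcal L_k^T\subseteq\mathcal L_k^S$ (so that $\Int(\mathcal L_k^T)\subseteq\Int(\mathcal L_k^S)$) to pass between the two interval posets when invoking Theorem~\ref{Thm2} in each direction. Beyond that, the proof is bookkeeping with the inverse pairs $(\swu,\swd)$ and $(\swl,\swr)$.
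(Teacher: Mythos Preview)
Your proposal is correct and follows essentially the same argument as the paper: both verify well-definedness via Lemmas~\ref{Lem5}, \ref{Lem1}, \ref{Lem6} and Theorem~\ref{Thm1}, use Proposition~\ref{Prop3} to land in $\Int(\mathcal L_k^T)$, derive injectivity from that of $\DL_k$ and $\swl$, and obtain surjectivity by pulling back through $\DL_k^{-1}$ and applying Proposition~\ref{Prop2}. One tiny quibble: at the injectivity step you assert that $\swl$ restricts to a \emph{bijection} $\mathcal U_{2k+1}(132)\to\mathcal U_{2k+1}(312)$, but at that stage you have only shown it maps into $\mathcal U_{2k+1}(312)$ injectively (surjectivity of that restriction is what the rest of the proof establishes); since you only use injectivity there, the argument is unaffected.
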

\begin{proof}
First, recall from Lemma \ref{Lem5} that $\swl:\Av(132)\to\Av(312)$ and $\swr:\Av(312)\to\Av(132)$ are bijections that are inverses of each other. If $\pi\in\mathcal U_{2k+1}(132)$, then we know from Theorem \ref{Thm1} that $\pi$ is sorted and has $k$ descents. Lemmas \ref{Lem1} and \ref{Lem6} guarantee that $\swl(\pi)$ is sorted and has $k$ descents, so it follows from Theorem \ref{Thm1} that $\swl(\pi)\in\mathcal U_{2k+1}(312)$. This means that it actually makes sense to apply $\DL_k$ to $\swl(\pi)$. Since $\swr(\swl(\pi))=\pi$ is sorted, Proposition~\ref{Prop3} tells us that $\DL_k(\swl(\pi))\in\Int(\mathcal L_k^T)$. Hence, $\DL_k\circ\swl$ does indeed map $\mathcal U_{2k+1}(132)$ into $\Int(\mathcal L_k^T)$. The injectivity of the map $\DL_k\circ\swl:\mathcal U_{2k+1}(132)\to\Int(\mathcal L_k^T)$ follows from the injectivity of $\DL_k$ and the injectivity of $\swl$ on $\mathcal U_{2k+1}(132)$. To prove surjectivity, choose $(\Lambda,\Lambda')\in\Int(\mathcal L_k^T)$. Let $\sigma=\DL_k^{-1}(\Lambda,\Lambda')$. We know by the definition of $\DL_k$ that $\sigma\in\mathcal U_{2k+1}(312)$, so $\sigma$ has $k$ descents. According to Lemma~\ref{Lem1}, $\swr(\sigma)$ has $k$ descents. Since $\DL_k(\sigma)\in\Int(\mathcal L_k^T)$, it follows from Proposition~\ref{Prop2} that $\swr(\sigma)$ is sorted. By Theorem \ref{Thm1}, $\swr(\sigma)\in\mathcal U_{2k+1}(132)$. This proves surjectivity since $\DL_k\circ\swl(\swr(\sigma))=\DL_k(\sigma)=(\Lambda,\Lambda')$. 
\end{proof}

Combining Theorem \ref{Thm3}, Theorem \ref{Thm4}, and equation \eqref{Eq2} yields the following corollary. 

\begin{corollary}\label{Cor2}
For each nonnegative integer $k$, \[|\mathcal U_{2k+1}(132)|=|\mathcal U_{2k+1}(231)|=\frac{2}{(3k+1)(3k+2)}{4k+1\choose k+1}.\]
\end{corollary}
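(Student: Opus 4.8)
The plan is to simply chain together the three ingredients named in the statement, since all of the substantive work has already been carried out. First I would invoke Theorem~\ref{Thm4}, which asserts that $\DL_k\circ\swl\colon\mathcal U_{2k+1}(132)\to\Int(\mathcal L_k^T)$ is a bijection; this immediately gives $|\mathcal U_{2k+1}(132)|=|\Int(\mathcal L_k^T)|$. Then I would substitute the closed form for $|\Int(\mathcal L_k^T)|$ recorded in equation~\eqref{Eq2}, namely $\dfrac{2}{(3k+1)(3k+2)}\dbinom{4k+1}{k+1}$, to conclude the first half of the corollary.

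For the second equality I would appeal to Theorem~\ref{Thm3}, which provides a bijection $\swu\colon\mathcal U_{2k+1}(231)\to\mathcal U_{2k+1}(132)$ (with inverse $\swd$). Since this map preserves lengths and is a bijection between the two pattern-avoidance classes, we get $|\mathcal U_{2k+1}(231)|=|\mathcal U_{2k+1}(132)|$, and combining with the previous paragraph yields the stated common value. One could also remark that the composite $\DL_k\circ\swl\circ\swu$ is then a bijection $\mathcal U_{2k+1}(231)\to\Int(\mathcal L_k^T)$, but this is not needed for the enumeration.

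There is essentially no obstacle here: the difficulty was entirely absorbed into the proofs of Theorem~\ref{Thm3} (via Lemmas~\ref{Lem5}, \ref{Lem1}, \ref{Lem6} and Theorem~\ref{Thm1}) and of Theorem~\ref{Thm4} (via Propositions~\ref{Prop2} and~\ref{Prop3}, together with the bijectivity of $\DL_k$ from Theorem~\ref{Thm2}). The only thing to double-check is that all the maps involved genuinely preserve the length $2k+1$, so that the cardinalities being equated are those of the finite sets $\mathcal U_{2k+1}(\cdot)$ for the same $k$; this is immediate from the fact that $\swu$, $\swd$, $\swl$, $\swr$ act within $S_n$ and that $\DL_k$ sends length-$(2k+1)$ permutations to pairs of Dyck paths of semilength $k$. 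Hence the corollary follows formally, with no new computation required.
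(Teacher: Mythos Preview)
Your proposal is correct and follows exactly the paper's own approach: the corollary is obtained by combining Theorem~\ref{Thm3}, Theorem~\ref{Thm4}, and equation~\eqref{Eq2}. There is nothing to add.
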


\section{Noncrossing Partition Intervals and $\mathcal U_{2k+1}(312,1342)$}\label{Sec:Kreweras}

We say two distinct blocks $B,B'$ in a set partition $\rho$ of $[k]$ form a \emph{crossing} if there exist $a,c\in B$ and $b,d\in B'$ such that either $a<b<c<d$ or $a>b>c>d$. A partition is \emph{noncrossing} if no two of its blocks form a crossing. Let $\NC_k$ be the set of noncrossing partitions of $[k]$ ordered by refinement. That is, $\rho\leq\kappa$ in $\NC_k$ if every block of $\rho$ is contained in a block of $\kappa$. 

As mentioned in the introduction, the Kreweras lattices $\mathcal L_k^K$ are isomorphic to the noncrossing partition lattices $\NC_k$ and have the Tamari lattices $\mathcal L_k^T$ as extensions. We want to find a bijection between uniquely sorted permutations avoiding $312$ and $1342$ and intervals in Kreweras (equivalently, noncrossing partition) lattices.
Since $\mathcal U_{2k+1}(312,1342)\subseteq \mathcal U_{2k+1}(312)$ and $\Int(\mathcal L_k^K)\subseteq\Int(\mathcal L_k^S)$, one might hope that the map $\DL_k$ from Section \ref{Sec:Stanley} would yield our desired bijection. In other words, it would be nice if we had $\DL_k(\mathcal U_{2k+1}(312,1342))=\Int(\mathcal L_k^K)$. This, however, is not the case. For example, $3254167\in\mathcal U_7(312,1342)$, but $\DL_3(3254167)=(UUDDUD,UUDUDD)\not\in\Int(\mathcal L_3^K)$ (see Figure \ref{Fig2}).
Therefore, we must define a different map. We find it convenient to only work with the noncrossing partition lattices in this section. Thus, our goal is to prove the following theorem. 

\begin{theorem}\label{Thm5}
For each nonnegative integer $k$, there is a bijection \[\Upsilon_k:\mathcal U_{2k+1}(312,1342)\to\Int(\NC_k).\]
\end{theorem}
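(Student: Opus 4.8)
The plan is to prove Theorem~\ref{Thm5} by constructing a \emph{generating tree} for each of the two families $\bigl(\mathcal U_{2k+1}(312,1342)\bigr)_{k\ge 1}$ and $\bigl(\Int(\NC_k)\bigr)_{k\ge 1}$, showing that the two trees are governed by the same succession rule rooted at the same label, and then letting $\Upsilon_k$ be the level-preserving bijection induced by the resulting isomorphism of trees (once a convention is fixed for ordering children that share a label, this bijection is completely explicit). Since $|\Int(\NC_k)|=\frac{1}{2k+1}\binom{3k}{k}$ counts ternary trees with $k$ internal nodes (OEIS A001764), the natural candidate for the common rule is: the root carries label $(1)$, and a node carrying label $(m)$ has $m+2$ children, carrying the labels $1,2,\dots,m+2$; one verifies immediately that the tree rooted at $(1)$ has $\frac{1}{2k+1}\binom{3k}{k}$ nodes at level $k$, matching \eqref{Eq3}. (The case $k=0$ is the trivial bijection $\mathcal U_1(312,1342)=\{1\}\to\Int(\NC_0)$.)

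For the noncrossing-partition side, the generating tree is the standard one coming from the recursive structure of $\NC_k$: from an interval $[\rho,\kappa]\in\Int(\NC_k)$ one deletes the element $k$ from both partitions to obtain $[\bar\rho,\bar\kappa]\in\Int(\NC_{k-1})$, and conversely one grows $[\bar\rho,\bar\kappa]$ by choosing how to reinsert $k$: which ``outer'' block of $\bar\rho$ the element $k$ joins (or whether $\{k\}$ becomes a new singleton), together with which block of $\bar\kappa$ containing that $\bar\rho$-block it joins in $\kappa$, subject to keeping both partitions noncrossing and $\rho\le\kappa$. Assigning to $[\rho,\kappa]$ the label that records the number of admissible such extensions, the plan is to check that this label transforms exactly by $(m)\rightsquigarrow(1)(2)\cdots(m+2)$. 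This is essentially Kreweras' enumeration of $\Int(\NC_k)$ repackaged as a succession rule, and is the routine half of the argument.

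The substantive work is on the permutation side. The plan is to reduce $\pi\in\mathcal U_{2k+1}(312,1342)$ to an element of $\mathcal U_{2k-1}(312,1342)$ by a canonical deletion of two entries: since $\pi$ is sorted we have $\pi_{2k+1}=2k+1$, which is removed, and then one further entry is removed, chosen using the canonical hook configuration of $\pi$ and the structure supplied by Lemmas~\ref{Lem2} and~\ref{Lem4} (the left-to-right maxima of the plot are $(1,\pi_1),\mathfrak N_1,\dots,\mathfrak N_k$, and the remaining points are exactly the descent bottoms), so that the result is again sorted, avoids $312$ and $1342$, and has $k-1$ descents, hence lies in $\mathcal U_{2k-1}(312,1342)$ by Theorem~\ref{Thm1} and Proposition~\ref{Prop1}. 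One then labels $\pi$ by the number of ways of running this step in reverse, i.e.\ of inserting two entries so as to produce a permutation of $\mathcal U_{2k+3}(312,1342)$, and the heart of the proof is to show that (i) each element of $\mathcal U_{2k+3}(312,1342)$ arises exactly once this way, and (ii) the label evolves by $(m)\rightsquigarrow(1)(2)\cdots(m+2)$. Here the two forbidden patterns play complementary roles: avoidance of $312$ forces the rigid description of the plot in Lemma~\ref{Lem4}, so the legal insertion positions form a well-ordered chain of active sites, while avoidance of $1342$ is precisely the constraint that truncates the list of legal insertions so that there are $m+2$ children rather than more. I expect (i) and (ii) — in particular identifying the exact set of active sites and checking the $1342$-condition against each candidate insertion — to be the main obstacle; the noncrossing-partition tree and the passage from the tree isomorphism to $\Upsilon_k$ are comparatively mechanical.
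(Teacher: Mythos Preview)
Your approach is the paper's approach: both families are given the same generating tree (the paper uses axiom $(3)$ with rule $(m)\rightsquigarrow(3)(4)\cdots(m+2)$, which is your rule after shifting labels by $2$), the $\Int(\NC_k)$ side is grown by deleting/reinserting the largest element, and the permutation side by deleting/reinserting two entries governed by the canonical hook configuration. Your description of the noncrossing-partition half is correct and, as you say, routine.

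The gap is that on the permutation side you have written a plan, not a proof: you have not said \emph{which} second entry is deleted, nor what the label is, and that is where all the content lives. Concretely, the paper deletes, along with $(2k{+}1,2k{+}1)$, its \emph{partner}: the point immediately to the right of the southwest endpoint of the hook whose northeast endpoint is $(2k{+}1,2k{+}1)$. The label is read off the \emph{skyline} of $\pi$ (the chain of hooks connecting $(1,\pi_1)$ to $(2k{+}1,2k{+}1)$, whose existence already requires an argument): it equals $1$ plus the number of skyline hooks plus the number of skyline hooks whose northeast endpoint $(q,\pi_q)$ is \emph{nonconjoined}, meaning $\pi_q\neq\pi_{q+1}+1$. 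Showing that the reverse insertions are exactly parameterized by these skyline data, land back in $\mathcal U_{2k+3}(312,1342)$, and transform the label correctly is the substance of Lemmas~\ref{Lem8} and~\ref{Lem7}. The $1342$-avoidance enters there not just to ``truncate the list of legal insertions'' but to force the newly inserted point to sit immediately below either a nonconjoined skyline northeast endpoint or a partner --- this dichotomy is precisely why the label has two summands and why the children's labels fill out the full interval $(3),(4),\ldots,(m{+}2)$. Until that case analysis is carried out, what you have is a correct outline rather than a proof.
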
    
The proof of Theorem \ref{Thm5} in the specific case $k=0$ is trivial (we make the convention that  $\NC_0=\{\emptyset\}$), so we will assume throughout this section that $k\geq 1$. 

To prove this theorem, we make use of \emph{generating trees}, an enumerative tool that was introduced in \cite{Chung} and studied heavily afterward \cite{Banderier, West3, West2}. To describe a generating tree of a class of combinatorial objects, we first specify a scheme by which each object of size $n$ can be uniquely generated from an object of size $n-1$. We then label each object with the number of objects it generates. The generating tree consists of an ``axiom" that specifies the labels of the objects of size $1$ along with a ``rule" that describes the labels of the objects generated by each object with a given label. For example, in the generating tree 
\[\text{Axiom: }(2)\qquad\text{Rule: }(1)\leadsto(2),\quad(2)\leadsto(1)(2),\] the axiom $(2)$ tells us that we begin with a single object of size $1$ that has label $2$. The rule $(1)\leadsto(2),\hspace{.15cm}(2)\leadsto(1)(2)$ tells us that each object of size $n-1$ with label $1$ generates a single object of size $n$ with label $2$, whereas each object of size $n-1$ with label $2$ generates one object of size $n$ with label $1$ and one object of size $n$ with label $2$. This example generating tree is now classical (see Example 3 in \cite{West3}); it describes objects counted by the Fibonacci numbers. 

We are going to describe a generating tree for the class of intervals in noncrossing partition lattices and a generating tree for the class\footnote{Every combinatorial class has a ``size function." The ``size" of a permutation of length $2k+1$ in this class is $k$.} $\mathcal U(312,1342)$. We will find that there is a natural isomorphism between these two generating trees. This isomorphism yields the desired bijections $\Upsilon_k$. 

\begin{remark}\label{Rem2}
It is actually possible to give a short description of the bijection $\Upsilon_k$ that does not rely on generating trees. We do this in the next paragraph. However, it is not at all obvious from the definition we are about to give that this map is indeed a bijection from $\mathcal U_{2k+1}(312,1342)$ to $\Int(\NC_k)$. The current author was able to prove this directly, but the proof ended up being very long and tedious. For this reason, we will content ourselves with merely defining the map. We also omit the proof that this map is indeed the same as the map $\Upsilon_k$ that we will obtain later via generating trees, although this fact can be proven by tracing carefully through the relevant definitions. In order to avoid potential confusion arising from the fact that we have given different definitions of these maps and have not proven them to be equivalent, we use the symbol $\Upsilon_k'$ for the map defined in the next paragraph.   

Suppose we are given $\pi\in\mathcal U_{2k+1}(312,1342)$. Because $\pi$ is sorted, we know from Proposition \ref{Prop1} that it has a canonical hook configuration $\mathcal H$. Let $\mathfrak W_1,\ldots,\mathfrak W_k$ be the northeast endpoints of the hooks in $\mathcal H$ listed in increasing order of height. Let $\mathfrak U_\ell$ be the southwest endpoint of the hook whose northeast endpoint is $\mathfrak W_\ell$. The \emph{partner} of $\mathfrak W_\ell$, which we denote by $\mathfrak V_\ell$, is the point immediately to the right\footnote{We say a point $\mathfrak X$ is \emph{immediately to the right of} a point $\mathfrak X'$ if $\mathfrak X$ is the leftmost point to the right of $\mathfrak X'$. The phrases ``immediately to the left," ``immediately above," and ``immediately below" are defined similarly.} of $\mathfrak U_\ell$ in the plot of $\pi$. Let $\rho$ be the partition of $[k]$ obtained as follows. Place numbers $\ell,m\in [k]$ in the same block of $\rho$ if $\mathfrak V_\ell$ appears immediately above and immediately to the left of $\mathfrak V_m$ in the plot of $\pi$. Then, close all of these blocks by transitivity. Let $\kappa$ be the partition of $[k]$ obtained as follows. Place numbers $\ell,m\in[k]$ in the same block of $\kappa$ if they are in the same block of $\rho$ or if $\mathfrak W_\ell$ appears immediately above and immediately to the left of $\mathfrak V_m$ in the plot of $\pi$. Then, close all of these blocks by transitivity. Let $\Upsilon_k'(\pi)=(\rho,\kappa)$. Figure \ref{Fig11} shows an example application of each of the maps $\Upsilon_1',\Upsilon_2',\Upsilon_3',\Upsilon_4'$ (which are secretly the same as the maps $\Upsilon_1,\Upsilon_2,\Upsilon_3,\Upsilon_4$ defined later). At this point in time, the reader should ignore the horizontal maps, the green arrows, and the green shading in Figure \ref{Fig11}. \hspace*{\fill}$\lozenge$
\end{remark}

We now proceed to describe the generating tree for the combinatorial class of intervals in noncrossing partition lattices. Let us say an interval $(\rho,\kappa)\in\Int(\NC_k)$ \emph{generates} an interval $(\widetilde\rho,\widetilde\kappa)\in\Int(\NC_{k+1})$ if $\rho$ and $\kappa$ are the partitions obtained by removing the number $k+1$ from its blocks in $\widetilde\rho$ and $\widetilde\kappa$, respectively. We say a block $B$ of a noncrossing partition $\rho$ is \emph{exposed} if there is no block $B'\in\rho$ with $\min B'<\min B\leq \max B<\max B'$ (i.e., there is no block above $B$ in the arch diagram of $\rho$). Given $(\rho,\kappa)\in\Int(\NC_k)$, let $E_1(\rho,\kappa)$ be the set of exposed blocks of $\kappa$. Let $E_2(\rho,\kappa)$ be the set of exposed blocks of $\rho$ that are contained in exposed blocks of $\kappa$. We can write $E_1(\rho,\kappa)=\{\mathscr B_1,\ldots,\mathscr B_t\}$, where $\mathscr B_1,\ldots,\mathscr B_t$ are ordered from right to left. For each $i\in\{1,\ldots,t\}$, let $\mathscr B_{i,1},\ldots,\mathscr B_{i,m_i}$ be the blocks of $\rho$ that are contained in $\mathscr B_i$, ordered from right to left. We have $E_2(\rho,\kappa)=\{\mathscr B_{i,j}:1\leq i\leq t\text{ and }1\leq j\leq m_i\}$. Define the label of $(\rho,\kappa)$ to be $a(\rho,\kappa)=1+|E_1(\rho,\kappa)|+|E_2(\rho,\kappa)|=1+t+\sum_{i=1}^t m_i$.  

There are $a(\rho,\kappa)$ different operations that generate an interval in $\Int(\NC_{k+1})$ from the interval $(\rho,\kappa)\in\Int(\NC_k)$; we call these operations $\mathfrak u$, $\mathfrak v_i$ (for $1\leq i\leq t$), and $\mathfrak w_{i,j}$ (for $1\leq i\leq t$ and $1\leq j\leq m_i$). First, define $\mathfrak u(\rho,\kappa)$ to be the interval whose first and second partitions are obtained
by appending the singleton block $\{k+1\}$ to $\rho$ and $\kappa$, respectively. This has the effect of increasing the value of the label by $2$, so $a(\mathfrak u(\rho,\kappa))=a(\rho,\kappa)+2$. Let $\mathfrak v_i(\rho,\kappa)$ be the interval whose second partition is obtained by adding the number $k+1$ to the exposed block $\mathscr B_i$ of $\kappa$ and whose first partition is obtained by appending the singleton block $\{k+1\}$ to $\rho$. Note that $a(\mathfrak v_i(\rho,\kappa))=a(\rho,\kappa)+1-(i-1)-\sum_{r=1}^{i-1}m_r$. Finally, let $\mathfrak w_{i,j}(\rho,\kappa)$ be the interval whose second partition is obtained by adding the number $k+1$ to the exposed block $\mathscr B_i$ of $\kappa$ and whose first partition is obtained by adding the number $k+1$ to the exposed block $\mathscr B_{i,j}$ of $\rho$. Note that $a(\mathfrak w_{i,j}(\rho,\kappa))=a(\rho,\kappa)-(i-1)-\sum_{r=1}^{i-1}m_r-(j-1)$. The bottom of Figure \ref{Fig11} depicts some of these operations.

\begin{figure}[h]
\begin{center}
\includegraphics[width=.7\linewidth]{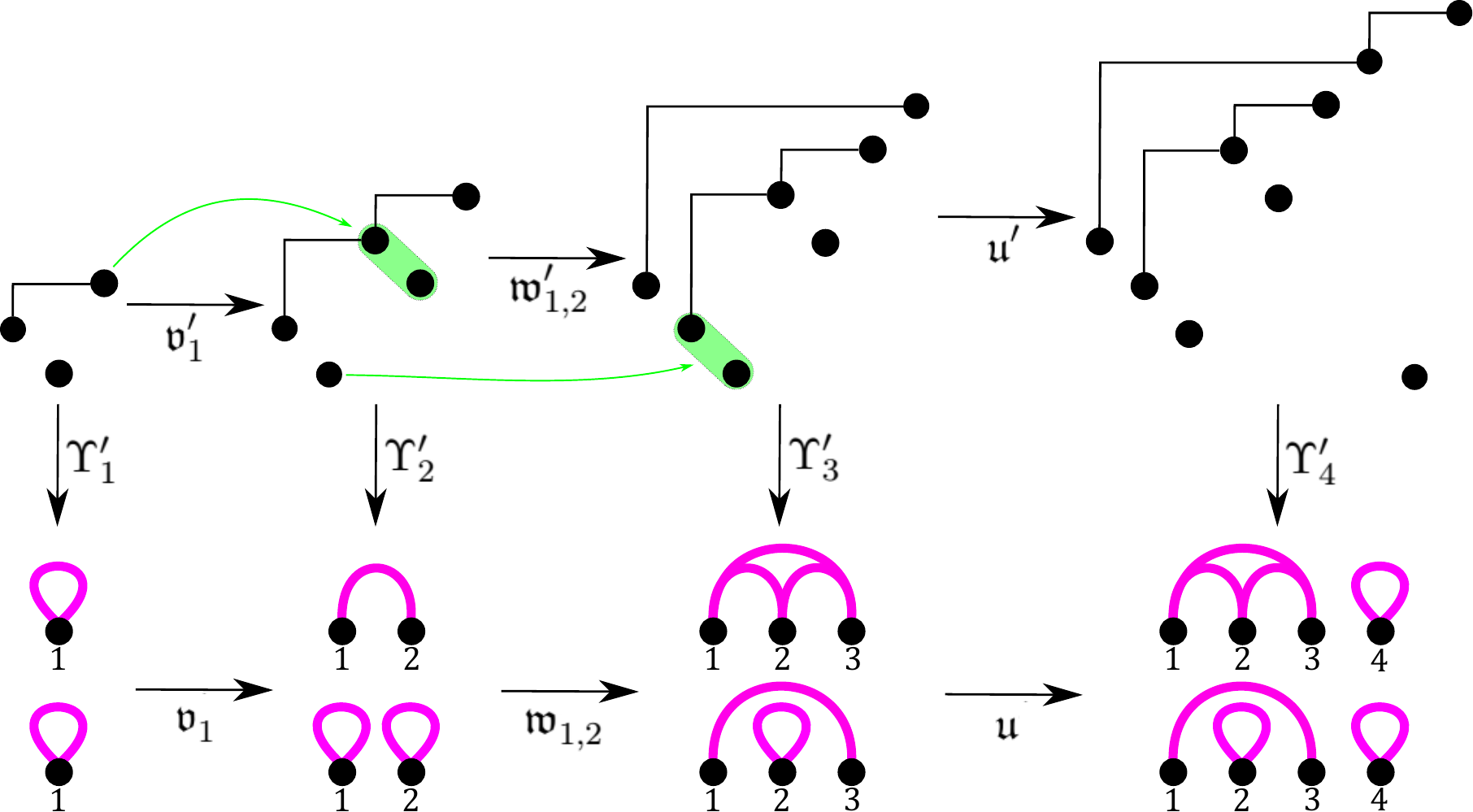}
\caption{Examples illustrating the various maps and operations defined in this section. We find it convenient to draw the plots of uniquely sorted permutations with their canonical hook configurations. The green arrows indicate the points that split into two when we apply $\mathfrak v_1'$ and $\mathfrak w_{1,2}'$. We have drawn each noncrossing partition interval with its first partition directly below its second partition.}
\label{Fig11}
\end{center}  
\end{figure}

It is now straightforward to check that for every integer $p$ with $3-a(\rho,\kappa)\leq p\leq 2$, there is a unique operation that generates from $(\rho,\kappa)$ an interval $(\widetilde\rho,\widetilde\kappa)$ with $a(\widetilde\rho,\widetilde\kappa)=a(\rho,\kappa)+p$. Thus, a generating tree of the class of intervals in noncrossing partition lattices is 
\begin{equation}\label{Eq13}
\text{Axiom: }(3)\qquad\text{Rule: }(m)\leadsto(3)(4)\cdots(m+2)\quad\text{for every }m\in\mathbb N.
\end{equation}
Let us remark that it is proven in \cite{Banderier} that the generating tree in \eqref{Eq13} describes objects counted by the $3$-Catalan numbers $\frac{1}{2k+1}{3k\choose k}$. Thus, we have actually reproven equation \eqref{Eq3}.  

We now want to describe a generating tree for the combinatorial class $\mathcal U(312,1342)$. We associate such permutations with their canonical hook configurations. Suppose $\pi=\pi_1\cdots\pi_{2k+1}\in\mathcal U_{2k+1}(312,1342)$, and let $\mathfrak P(i)$ be the point $(i,\pi_i)$ in the plot of $\pi$. We claim that there is a chain of hooks connecting the point $\mathfrak P(1)$ to the point $\mathfrak P(2k+1)$. We call this chain of hooks (including the endpoints of the hooks in the chain) the \emph{skyline} of $\pi$. For example, if $\pi=432657819$ is the permutation in the top right of Figure \ref{Fig11}, then the skyline of $\pi$ contains the points $\mathfrak P(1),\mathfrak P(7),\mathfrak P(9)$. To prove the claim, let $\mathfrak P(q)$ be the rightmost point that is connected to $\mathfrak P(1)$ via a chain of hooks. Suppose instead that $q<2k+1$. By the maximality of $q$, the point $\mathfrak P(q)$ is not the southwest endpoint of a hook. This means that it is not a descent top, so $\mathfrak P(q+1)$ is not a descent bottom. By Lemma \ref{Lem2}, $\mathfrak P(q+1)$ is the northeast endpoint of a hook. This hook must lie above the chain of hooks connecting $\mathfrak P(1)$ and $\mathfrak P(q)$ since hooks cannot cross or overlap each other (see Remark \ref{Rem4}). Consequently, the hook with northeast endpoint $\mathfrak P(q+1)$ must have a southwest endpoint lying to the left of $\mathfrak P(1)$. This is clearly impossible, so we have proven the claim.

We say a point $\mathfrak P(q)$ in the skyline is \emph{conjoined} if there is a point immediately below and immediately to the right of it (i.e., $\pi_q=\pi_{q+1}+1$). Otherwise, $\mathfrak P(q)$ is \emph{nonconjoined}. The point $\mathfrak P(2k+1)$ is nonconjoined because there is no point to its right. The point $\mathfrak P(1)$ is conjoined since, if it were not, the entries $\pi_1,\pi_2,\pi_1-1$ would form a $312$ pattern. We say a hook in the skyline is conjoined (respectively, nonconjoined) if its northeast endpoint is conjoined (respectively, nonconjoined). 

Recall from Remark~\ref{Rem2} that in a uniquely sorted permutation, the \emph{partner} of the northeast endpoint of a hook $H$ in the canonical hook configuration is the point immediately to the right of the southwest endpoint of $H$. Let us say a permutation $\pi\in\mathcal U_{2k+1}(312,1342)$ \emph{generates} a permutation $\widetilde\pi\in\mathcal U_{2k+3}(312,1342)$ if the plot of $\pi$ is obtained by removing the highest point in the plot of $\widetilde \pi$ (which is also the rightmost point and is also a northeast endpoint of a hook in the canonical hook configuration of $\widetilde\pi$) and the partner of that point from the plot of $\widetilde\pi$ (and then normalizing). Let $F_1(\pi)$ be the set of nonconjoined hooks in the skyline of $\pi$. Let $F_2(\pi)$ be the set of all hooks in the skyline of $\pi$. We can write $F_1(\pi)=\{{\mathscr H}_1,\ldots,{\mathscr H}_t\}$, where ${\mathscr H}_1,\ldots,{\mathscr H}_t$ are ordered from right to left. For each $i\in\{1,\ldots,t\}$, let ${\mathscr H}_{i,1},\ldots,{\mathscr H}_{i,m_i}$ be the hooks, ordered from right to left, that lie between $\mathscr H_i$ and $\mathscr H_{i+1}$ in the skyline of $\pi$, including $\mathscr H_i$ but not including $\mathscr H_{i+1}$. When $i=t$, these are the hooks that are equal to or to the left of $\mathscr H_t$ in the skyline. Note that ${\mathscr H_i}={\mathscr H}_{i,1}$. We have $F_2(\pi)=\{{\mathscr H}_{i,j}:1\leq i\leq t\text{ and }1\leq j\leq m_i\}$. Define the label of $\pi$ to be $b(\pi)=1+|F_1(\pi)|+|F_2(\pi)|=1+t+\sum_{i=1}^t m_i$.    

We now define operations $\mathfrak u'$, $\mathfrak v_i'$ (for $1\leq i\leq t$), and $\mathfrak w_{i,j}'$ (for $1\leq i\leq t$ and $1\leq j\leq m_i$), which generate a permutation in $\mathcal U_{2k+3}(312,1342)$ from the permutation $\pi\in\mathcal U_{2k+1}(312,1342)$ (the justification that the resulting permutation is in $\mathcal U_{2k+3}(312,1342)$ is given in Lemma~\ref{Lem7}). To define these operations, we first need to establish some notation. Given a point $\mathfrak X$ in the plot $\pi$, let $\spli_{\mathfrak X}(\pi)$ be the permutation whose plot is obtained by inserting a new point immediately below and immediately to the right of $\mathfrak X$ and then normalizing. In other words, we ``split" the point $\mathfrak X$ into two points in such a way that one of the new points is below and to the right of the other. For $1\leq i\leq t$, let $\mathfrak X_i$ be the northeast endpoint of the nonconjoined skyline hook $\mathscr H_i$. For $1\leq i\leq t$ and $1\leq j\leq m_i$, let $\mathfrak Y_{i,j}$ be the point immediately to the right of the southwest endpoint of $\mathscr H_{i,j}$ (in other words, $\mathfrak Y_{i,j}$ is the partner of the northeast endpoint of $\mathscr H_{i,j}$). We define 
\begin{equation}\label{Eq18}
\mathfrak u'(\pi)=(\pi\ominus 1)\oplus 1,\quad \mathfrak v_i'(\pi)=(\spli_{\mathfrak X_i}(\pi))\oplus 1,\quad\text{and}\quad\mathfrak w_{i,j}'(\pi)=(\spli_{\mathfrak Y_{i,j}}(\pi))\oplus 1.
\end{equation}

\begin{figure}[h]
\begin{center}
\includegraphics[width=.63\linewidth]{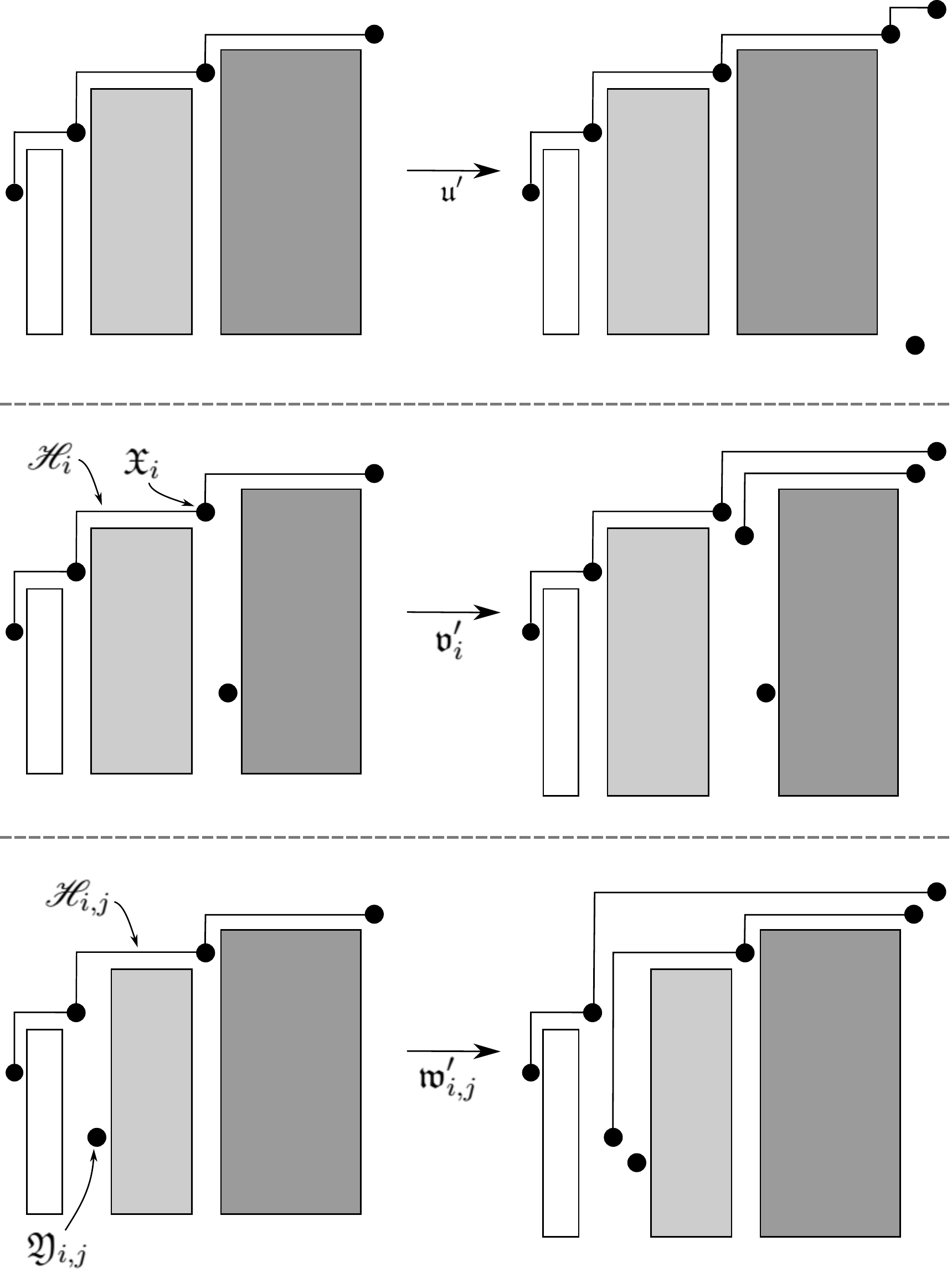}
\caption{Applications of the operations $\mathfrak u'$, $\mathfrak v_i'$, and $\mathfrak w_{i,j}'$. Within each of the three rows, the two blocks with the same amount of shading are meant to represent parts of the plot that have the same relative order as each other. The diagram also shows how the canonical hook configurations of the permutations change when we apply the operations (or when we undo the operations), as described in the proofs of Lemmas \ref{Lem8} and \ref{Lem7}.}
\label{Fig14}
\end{center}  
\end{figure}

In order to better understand these operations, we prove the following lemmas. The reader may find it helpful to refer to Figure \ref{Fig14} and the top of Figure \ref{Fig11} while reading the proofs that follow.

\begin{lemma}\label{Lem8}
Every permutation $\widetilde\pi\in\mathcal U_{2k+3}(312,1342)$ is generated by a unique permutation $\pi\in\mathcal U_{2k+1}(312,1342)$. Furthermore, there is an operation of $\pi$ from the list in \eqref{Eq18} that sends $\pi$ to $\widetilde\pi$.
\end{lemma}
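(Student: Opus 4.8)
The plan is to reconstruct the unique generator $\pi$ from $\widetilde\pi$ and then identify which operation was applied. Let $\widetilde\pi \in \mathcal U_{2k+3}(312,1342)$. Since $\widetilde\pi$ is sorted, it has a canonical hook configuration, and by Theorem~\ref{Thm1} it has $k+1$ descents and hence $k+1$ hooks. First I would locate the highest point $\mathfrak Z$ in the plot of $\widetilde\pi$; since $\widetilde\pi$ is sorted we have $\widetilde\pi_{2k+3} = 2k+3$, so $\mathfrak Z = \mathfrak P(2k+3)$ is also the rightmost point, and by the canonical hook configuration construction (the way $\mathfrak N_{k+1}$ is chosen) it is a northeast endpoint. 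Let $H$ be the hook with northeast endpoint $\mathfrak Z$, let its southwest endpoint be a descent top $\mathfrak S$, and let $\mathfrak T$ be the partner of $\mathfrak Z$, i.e.\ the point immediately to the right of $\mathfrak S$. By the definition of ``generates,'' the only candidate generator is the permutation $\pi$ obtained by deleting $\mathfrak Z$ and $\mathfrak T$ from the plot of $\widetilde\pi$ and normalizing; this establishes uniqueness of $\pi$ \emph{if it exists and lies in $\mathcal U_{2k+1}(312,1342)$}. The work is to verify that it does.

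The key steps, in order. (1) Deleting $\mathfrak Z$ removes one descent (the descent at $\mathfrak S$, whose descent top was $\mathfrak S$) and deleting $\mathfrak T$: here I need to check that $\mathfrak T$ is a descent bottom of $\widetilde\pi$ whose removal does not change the descent count further, or more precisely that the net effect is to pass from $k+1$ descents to $k$ descents. This uses Lemma~\ref{Lem2}: since $\widetilde\pi$ is uniquely sorted, its descent bottoms and northeast endpoints partition $\{\mathfrak P(i): 2\le i\le 2k+3\}$, and $\mathfrak T$ being immediately right of the descent top $\mathfrak S$ forces $\mathfrak T$ to be a descent bottom. (2) Check that $\pi$ still avoids $312$ and $1342$ — deletion of points cannot create a pattern, so this is immediate; the only subtlety is that we also need $\pi$ to be nonempty and the normalization to be well-defined, which is clear. (3) Check that $\pi$ is sorted by exhibiting its canonical hook configuration: I would argue that the canonical hook configuration of $\widetilde\pi$, with the hook $H$ removed and the remaining hooks left attached to their endpoints, becomes exactly the canonical hook configuration of $\pi$ after normalization. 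This requires tracing the canonical hook configuration construction and using that the northeast endpoints $\mathfrak N_k, \dots, \mathfrak N_1$ of $\widetilde\pi$ (all but the topmost) are unaffected by deleting $\mathfrak Z$ and $\mathfrak T$ — i.e.\ none of them equals $\mathfrak T$ (that is step (1) again) and the ``leftmost point not weakly below higher hooks'' choices are preserved. Combined with step (1), Theorem~\ref{Thm1} then gives $\pi \in \mathcal U_{2k+1}(312,1342)$.

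For the second assertion — that some operation from \eqref{Eq18} sends $\pi$ to $\widetilde\pi$ — I would split into cases according to the position of $\mathfrak S$ and $\mathfrak T$ relative to the skyline of $\pi$. If $\mathfrak S$ (equivalently $\mathfrak T$, since they are adjacent) sits so that after deleting $\mathfrak Z, \mathfrak T$ the point $\mathfrak S$ and its neighbor collapse into a single point of the skyline of $\pi$, then $\widetilde\pi = \spli_{\mathfrak X_i}(\pi)\oplus 1$ for the appropriate nonconjoined skyline hook $\mathscr H_i$ (operation $\mathfrak v_i'$) or $\widetilde\pi = \spli_{\mathfrak Y_{i,j}}(\pi)\oplus 1$ (operation $\mathfrak w_{i,j}'$), depending on whether the collapsed point is a northeast endpoint of a skyline hook or a partner; the remaining degenerate case, in which $\mathfrak S$ was the rightmost descent top and $\pi$ ends up with $\mathfrak Z$'s old hook vanishing entirely, is $\mathfrak u'(\pi) = (\pi\ominus 1)\oplus 1$. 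The bookkeeping here is essentially the inverse of the definitions in \eqref{Eq18}, so it is a matter of careful case analysis rather than a new idea.

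\textbf{Main obstacle.} The hard part will be step (3): rigorously showing that deleting the top hook and its northeast endpoint (together with the partner point) transforms the canonical hook configuration of $\widetilde\pi$ into the canonical hook configuration of $\pi$ — in particular that the ``leftmost admissible point'' choices made in the canonical hook configuration construction for $\widetilde\pi$ remain the correct (leftmost admissible) choices for $\pi$ after the two points are removed. This is where the hypothesis that $\widetilde\pi$ avoids $312$ and $1342$ (not just $312$) should be needed, and where the skyline structure established just before the lemma does the real work: the chain of hooks from $\mathfrak P(1)$ to $\mathfrak P(2k+3)$ controls exactly which points lie below which hooks near the top of the plot. I would isolate this as the crux and handle the pattern-avoidance and descent-count bookkeeping around it as routine consequences of Lemmas~\ref{Lem2}, \ref{Lem4}, and Theorem~\ref{Thm1}.
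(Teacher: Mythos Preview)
Your overall strategy matches the paper's, but there is a genuine gap in step (3). You write that ``the canonical hook configuration of $\widetilde\pi$, with the hook $H$ removed and the remaining hooks left attached to their endpoints, becomes exactly the canonical hook configuration of $\pi$.'' This fails in the main case. The partner point $\mathfrak T=\widetilde{\mathfrak P}(\ell)$ is indeed a descent bottom, as you say, but nothing prevents it from \emph{also} being a descent top: whenever $\ell$ is a descent of $\widetilde\pi$, the point $\mathfrak T$ is the southwest endpoint of some hook $H'$ in the canonical hook configuration of $\widetilde\pi$. Deleting $\mathfrak T$ then destroys $H'$ as well as $H$, so ``remove $H$, keep the rest'' leaves a descent top in $\pi$ (namely the point coming from $\mathfrak S=\widetilde{\mathfrak P}(\ell-1)$) with no hook attached. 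The paper's remedy is to replace the two lost hooks $H,H'$ by a single new hook $H''$ whose southwest endpoint is $\mathfrak S$ and whose northeast endpoint is the northeast endpoint of $H'$; one needs the $312$-avoidance of $\widetilde\pi$ to see that $\mathfrak S$ actually lies below that northeast endpoint, so that $H''$ is a legitimate hook. Only in the degenerate case $\ell=2k+2$ (equivalently, $\ell$ is not a descent of $\widetilde\pi$) does your ``just delete $H$'' description work.

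A second, smaller issue: your case analysis for identifying the operation is too coarse. The paper uses both the $312$ and the $1342$ hypotheses in nontrivial ways here. For instance, when $\ell$ is not a descent one must show $\widetilde\pi_\ell\in\{1,\widetilde\pi_{\ell-1}-1\}$ (to distinguish $\mathfrak u'$ from $\mathfrak v_1'$), and this uses both avoided patterns; similarly, when $\ell$ is a descent and $\widetilde{\mathfrak P}(\ell)$ does not sit immediately above $\widetilde{\mathfrak P}(\ell+1)$, one needs $1342$-avoidance (together with $312$-avoidance) to force $\widetilde{\mathfrak P}(\ell-1)$ to sit immediately above $\widetilde{\mathfrak P}(\ell)$, which is what makes the operation a $\mathfrak v_i'$ at a nonconjoined skyline point rather than some $\mathfrak w_{i,j}'$. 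Your sketch does not indicate where $1342$ enters, and without it the case split does not close.
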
 

\begin{proof}
Let $\pi$ be the permutation that generates $\widetilde\pi$ (it is clearly unique). We first want to show that $\pi\in\mathcal U_{2k+1}(312,1342)$. Let $\mathfrak P(i)$ and $\widetilde{\mathfrak P}(i)$ denote the points $(i,\pi_i)$ and $(i,\widetilde \pi_i)$ in the plots of $\pi$ and $\widetilde\pi$, respectively. Since $\widetilde\pi$ is sorted, it has a canonical hook configuration $\widetilde{\mathcal H}$. The point $\widetilde{\mathfrak P}(2k+3)$ is the northeast endpoint of a hook $H$ in $\widetilde{\mathcal H}$.
Let $\widetilde{\mathfrak P}(\ell-1)$ be the southwest endpoint of $H$ so that $\widetilde{\mathfrak P}(\ell)$ is the partner of $\widetilde{\mathfrak P}(2k+3)$. The plot of $\pi$ is obtained from the plot of $\widetilde\pi$ by removing $\widetilde{\mathfrak P}(2k+3)$ and $\widetilde{\mathfrak P}(\ell)$ and normalizing. Note that $\pi$ avoids $312$ and $1342$ because $\widetilde\pi$ does. We know that $\widetilde{\mathfrak P}(2k+3)$ is not a descent bottom of the plot of $\widetilde \pi$ and that $\widetilde{\mathfrak P}(\ell)$ is a descent bottom. Using the fact that $\mathfrak P(\ell-1),\mathfrak P(\ell),\mathfrak P(\ell+1)$ do not form a $312$ pattern, one can check that $\des(\pi)=\des(\widetilde \pi)-1$. Theorem~\ref{Thm1} tells us that $\des(\widetilde\pi)=k+1$, so $\des(\pi)=k$. That same theorem now tells us that in order to prove $\pi\in\mathcal U_{2k+1}(312,1342)$, it suffices to prove $\pi$ is sorted. By Proposition \ref{Prop1}, we need to show that $\pi$ has a canonical hook configuration. 

Suppose first that $\ell$ is not a descent of $\widetilde\pi$. Since $\widetilde\pi_{\ell-1}>\widetilde\pi_\ell$, it follows from the fact that $\widetilde\pi$ avoids $312$ that $\widetilde\pi_{\ell+1}>\widetilde\pi_{\ell-1}$. Referring to the canonical hook configuration construction, we find that the hook with southwest endpoint $\widetilde{\mathfrak P}(\ell-1)$ must have $\widetilde{\mathfrak P}(\ell+1)$ as its northeast endpoint. This hook is $H$, and its northeast endpoint is $\widetilde{\mathfrak P}(2k+3)$. This shows that $\ell=2k+2$. The canonical hook configuration of $\pi$ is now obtained by removing the points $\widetilde{\mathfrak P}(2k+3)$ and $\widetilde{\mathfrak P}(\ell)=\widetilde{\mathfrak P}(2k+2)$ along with the hook $H$ and then normalizing. If $2\leq\widetilde\pi_{\ell}\leq\widetilde\pi_{\ell-1}-2$, then either the entries $\widetilde\pi_{\ell-1}-1,1,\widetilde\pi_\ell$ form a $312$ pattern or the entries $1,\widetilde\pi_{\ell-1}-1,\widetilde\pi_{\ell-1},\widetilde\pi_{\ell}$ form a $1342$ pattern. These are both impossible, so we must either have $\widetilde\pi_{\ell}=1$ or $\widetilde\pi_{\ell}=\widetilde\pi_{\ell-1}-1$. In the first case, $\mathfrak u'(\pi)=\widetilde\pi$. In the second case, $\mathfrak v_1'(\pi)=\widetilde\pi$. 

Next, assume $\ell$ is a descent of $\widetilde \pi$. In this case, $\widetilde{\mathfrak P}(\ell)$ is a descent top of the plot of $\widetilde\pi$, so it is the southwest endpoint of a hook $H'$ in $\widetilde{\mathcal H}$. Because $\mathfrak P(\ell-1)$, $\mathfrak P(\ell)$, and the northeast endpoint of $H'$ cannot form a $312$ pattern, $\mathfrak P(\ell-1)$ must be below and to the left of the northeast endpoint of $H'$. Therefore, we can draw a new hook $H''$ whose southwest endpoint is $\widetilde{\mathfrak P}(\ell-1)$ (which is also the southwest endpoint of $H$) and whose northeast endpoint is the northeast endpoint of $H'$. If we now remove the points $\widetilde{\mathfrak P}(2k+3)$ and $\widetilde{\mathfrak P}(\ell)$ along with the hooks $H$ and $H'$ (but keep the hook $H''$) and then normalize, we obtain the canonical hook configuration of $\pi$. To see an example of this, consider the permutations $\pi=21435$ and $\widetilde\pi=3215467$, which appear in the top of Figure \ref{Fig11} ($\ell=2$ in this example). This construction is also depicted in each of the lower two rows in Figure~\ref{Fig14}, where $\widetilde\pi$ is the permutation on the right and $\pi$ is the permutation on the left. This completes the proof that $\pi$ is sorted. Notice also that the point $\widetilde{\mathfrak P}(\ell-1)$ is in the skyline of $\widetilde\pi$. The chain of hooks connecting $\widetilde{\mathfrak P}(1)$ to $\widetilde{\mathfrak P}(\ell-1)$ in the plot of $\widetilde \pi$ still connects $\mathfrak P(1)$ to $\mathfrak P(\ell-1)$ in the plot of $\pi$, so $\mathfrak P(\ell-1)$ is in the skyline of $\pi$.

We still need to show that there is an operation from \eqref{Eq18} that sends $\pi$ to $\widetilde\pi$ when $\ell$ is a descent of $\widetilde\pi$. In this case, $\ell-1$ is a descent of $\pi$, and $\mathfrak P(\ell-1)$ is in the skyline of $\pi$. We have two cases to consider. First, assume $\widetilde{\mathfrak P}(\ell)$ lies immediately above $\widetilde{\mathfrak P}(\ell+1)$ in the plot of $\widetilde\pi$. In this case, let $\mathscr H_{i,j}$ be the hook in $\pi$ with southwest endpoint $\mathfrak P(\ell-1)$ (so $\mathfrak P(\ell)=\mathfrak Y_{i,j}$). We find that $\mathfrak w_{i,j}'(\pi)=\widetilde\pi$. For the second case, assume $\widetilde{\mathfrak P}(\ell)$ does not lie immediately above $\widetilde{\mathfrak P}(\ell+1)$ in the plot of $\widetilde\pi$. There exists some index $\delta$ such that $\widetilde\pi_{\ell+1}<\widetilde\pi_\delta<\widetilde\pi_\ell$. This implies that $\mathfrak P(\ell-1)$ is a nonconjoined point in the skyline of $\pi$, so it is the northeast endpoint of some nonconjoined hook $\mathscr H_i$ (that is, $\mathfrak P(\ell-1)=\mathfrak X_i$). Because $\widetilde\pi$ avoids $312$, we must have $\delta\leq \ell-2$. If there were some index $\delta'$ with $\widetilde\pi_\ell<\widetilde\pi_{\delta'}<\widetilde\pi_{\ell-1}$, then either the entries $\widetilde\pi_{\delta'},\widetilde\pi_{\delta},\widetilde\pi_\ell$ would form a $312$ pattern or the entries $\widetilde\pi_\delta,\widetilde\pi_{\delta'},\widetilde\pi_{\ell-1},\widetilde\pi_\ell$ would form a $1342$ pattern. This is impossible, so $\widetilde{\mathfrak P}(\ell-1)$ must be immediately above and to the left of $\widetilde{\mathfrak P}(\ell)$ in the plot of $\widetilde\pi$. Therefore, $\mathfrak v_i'(\pi)=\widetilde\pi$. We should check that $\ell-1\geq 2$ in this case so that $\mathfrak P(\ell-1)$ is actually the northeast endpoint of a hook. This follows from the fact, which we remarked earlier, that the point $\mathfrak P(1)$ is conjoined (because $\pi$ avoids $312$). 
\end{proof}

\begin{lemma}\label{Lem7}
If $\pi\in\mathcal U_{2k+1}(312,1342)$ and $\mathfrak o'$ is an operation in the list \eqref{Eq18}, then the permutation $\widetilde\pi=\mathfrak o'(\pi)$ is in $\mathcal U_{2k+3}(312,1342)$. Furthermore, $\mathfrak o'$ is the only operation in \eqref{Eq18} that sends $\pi$ to $\widetilde\pi$.
\end{lemma}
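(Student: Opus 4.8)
The plan is to establish the two assertions about $\widetilde\pi=\mathfrak o'(\pi)$ in turn, treating the three families $\mathfrak u'$, $\mathfrak v_i'$, $\mathfrak w_{i,j}'$ in parallel. Throughout, let $\mathcal H$ denote the canonical hook configuration of $\pi$ (which exists by Proposition \ref{Prop1} since $\pi$ is sorted), and let $M$ be the largest, rightmost point of $\widetilde\pi$, created by the trailing $\oplus 1$. When $\mathfrak o'\in\{\mathfrak v_i',\mathfrak w_{i,j}'\}$, let $N$ be the point created by $\spli$; it sits immediately below and immediately to the right of a point $\mathfrak X$ (namely $\mathfrak X=\mathfrak X_i$ or $\mathfrak X=\mathfrak Y_{i,j}$), and its value is immediately below the value of $\mathfrak X$. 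Deleting $M$ (and, when applicable, $N$) from $\widetilde\pi$ and normalizing returns $\pi$.

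First I would show $\widetilde\pi$ avoids $312$ and $1342$. Since $M$ is the global maximum with nothing to its right, it occurs in no instance of either pattern; for $\mathfrak u'$ it then suffices to note that the global minimum of $\pi\ominus 1$ (the only other new point) likewise occurs in no instance of $312$ or $1342$, so $\mathfrak u'(\pi)$ avoids both because $\pi$ does. For $\mathfrak v_i'$ and $\mathfrak w_{i,j}'$, a short check shows that no occurrence of $312$ or $1342$ can use both $\mathfrak X$ and $N$ (they lie at consecutive positions with consecutive values, the left one larger, and neither pattern contains such a pair in the required spot); hence any occurrence through $N$ would, upon replacing $N$ by $\mathfrak X$, become an occurrence of the same pattern inside $\pi$ — impossible. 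Next I would verify that $\widetilde\pi$ is sorted and $\des(\widetilde\pi)=k+1$, so that $\widetilde\pi$ is uniquely sorted by Theorem \ref{Thm1}. The descent count is a routine local computation: $\mathfrak u'$ and $\spli$ each create exactly one new descent, and the trailing $\oplus 1$ creates an ascent, so $\des(\widetilde\pi)=\des(\pi)+1=k+1$.

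For sortedness I would exhibit a canonical hook configuration $\widetilde{\mathcal H}$ of $\widetilde\pi$. For $\mathfrak u'$ one takes $\mathcal H$ together with a single new hook running from the new descent top (which, since $\pi$ is sorted, has height $2k+2$) to $M$; this hook lies above all of $\mathcal H$, and rerunning the canonical hook configuration construction confirms that the result is canonical. For $\mathfrak v_i'$ and $\mathfrak w_{i,j}'$, $\widetilde{\mathcal H}$ is obtained from $\mathcal H$ by the transformation inverse to the one in the proof of Lemma \ref{Lem8} and pictured in Figure \ref{Fig14}: $M$ becomes the northeast endpoint of a new hook with southwest endpoint $\mathfrak X$, the hook of $\mathcal H$ that ended near $\mathfrak X$ is rerouted, and $N$ becomes a southwest endpoint inheriting the old northeast endpoint. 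I expect this to be the main obstacle: one must check that $\widetilde{\mathcal H}$ is genuinely the \emph{canonical} hook configuration of $\widetilde\pi$, i.e., that every rerouted northeast endpoint remains the leftmost admissible point in the construction. This is exactly where the hypotheses singling out $\mathfrak X$ are used — the choice of $\mathfrak X_i$ as the northeast endpoint of a \emph{nonconjoined} hook in the skyline of $\pi$ (for $\mathfrak v_i'$), and of $\mathfrak Y_{i,j}$ as the \emph{partner} of a northeast endpoint of a skyline hook (for $\mathfrak w_{i,j}'$), are precisely what guarantee that splitting $\mathfrak X$ does not spoil the leftmost-choice property. The remainder is careful bookkeeping of the canonical hook configuration construction, in the spirit of the proof of Lemma \ref{Lem8}.

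Finally, for uniqueness of the operation, suppose $\mathfrak o'(\pi)=\mathfrak o''(\pi)=\widetilde\pi$ with $\mathfrak o',\mathfrak o''$ in \eqref{Eq18}. Using that $\widetilde\pi$ is sorted, its canonical hook configuration determines the hook whose northeast endpoint is $M$, hence the position $\ell$ of the partner $\widetilde{\mathfrak P}(\ell)$ of $M$, hence also $\pi$ (obtained by deleting $M$ and $\widetilde{\mathfrak P}(\ell)$) — all from $\widetilde\pi$ alone. The case analysis in the proof of Lemma \ref{Lem8} then reads off, from whether $\ell$ is a descent of $\widetilde\pi$, whether $\widetilde\pi_\ell=1$, and whether $\widetilde{\mathfrak P}(\ell)$ lies immediately above $\widetilde{\mathfrak P}(\ell+1)$, exactly which of $\mathfrak u'$, $\mathfrak v_i'$, $\mathfrak w_{i,j}'$ was applied, with the indices $i,j$ recovered from which skyline hook of $\pi$ has the image of $\widetilde{\mathfrak P}(\ell-1)$ as an endpoint. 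As all of this data is determined by $\widetilde\pi$, we conclude $\mathfrak o'=\mathfrak o''$.
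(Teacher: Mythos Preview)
Your proposal is correct and follows essentially the same approach as the paper: verify pattern avoidance and the descent count directly, build $\widetilde{\mathcal H}$ by inverting the hook-reconstruction procedure from the proof of Lemma~\ref{Lem8} (adding one hook to $M$ and, for $\mathfrak v_i'$ and $\mathfrak w_{i,j}'$, rerouting the skyline hook through the split point $N$), and establish uniqueness by reading off the operation from the position $\ell$ of the partner of $M$ in the canonical hook configuration of $\widetilde\pi$. The paper glosses over the pattern-avoidance check that you spell out, while for uniqueness it carries out the case analysis directly (distinguishing $\ell-1=2k+1$ versus $\ell-1<2k+1$, and in the latter case noting that $\mathfrak v_{i+1}'$ and $\mathfrak w_{i,j}'$ with the same $\ell$ yield distinct permutations precisely because $\mathfrak X_{i+1}$ is nonconjoined) rather than invoking Lemma~\ref{Lem8}; but the substance is the same.
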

\begin{proof}
Let $\mathcal H$ be the canonical hook configuration of $\pi$. Let $\mathfrak P(i)$ and $\widetilde{\mathfrak P}(i)$ denote the points $(i,\pi_i)$ and $(i,\widetilde\pi_i)$, respectively. Since $\pi$ has $k$ descents and avoids $312$ and $1342$, it is not difficult to check that $\widetilde\pi$ has $k+1$ descents and avoids those same patterns. According to Theorem \ref{Thm1}, we need to show that $\widetilde \pi$ is sorted. By Proposition \ref{Prop1}, this amounts to showing that $\widetilde\pi$ has a canonical hook configuration $\widetilde{\mathcal H}$. To do this, we simply reverse the process described in the proof of Lemma \ref{Lem8} that allowed us to obtain the canonical hook configuration of $\pi$ from that of $\widetilde\pi$. More precisely, if $\mathfrak o'$ is $\mathfrak u'$ or $\mathfrak v_1'$, then we keep all of the hooks from $\mathcal H$ the same (modulo normalization of the plot) and attach a new hook with southwest endpoint $\widetilde{\mathfrak P}(2k+1)$ and northeast endpoint $\widetilde{\mathfrak P}(2k+3)$. Otherwise, we have either $\mathfrak o'=\mathfrak v_{i+1}'$ or $\mathfrak o'=\mathfrak w_{i,j}'$ for some $1\leq i\leq t$ and $1\leq j\leq m_t$. If $\mathfrak o'=\mathfrak v_{i+1}'$, let $\ell$ be such that $\mathfrak P(\ell-1)=\mathfrak X_{i+1}$. If $\mathfrak o'=\mathfrak w_{i,j}'$, let $\ell$ be such that $\mathfrak P(\ell)=\mathfrak Y_{i,j}$. In either case, $\ell-1$ is a descent of both $\pi$ and $\widetilde\pi$. Also, $\ell$ is a descent of $\widetilde\pi$. We obtain a (not canonical) configuration of hooks of $\widetilde\pi$ by keeping all of the hooks in $\mathcal H$ unchanged (modulo normalization). Let $H''$ be the hook in this configuration with southwest endpoint $\widetilde{\mathfrak P}(\ell-1)$. Now form a new hook $H$ of $\widetilde\pi$ with southwest endpoint $\widetilde{\mathfrak P}(\ell-1)$ and northeast endpoint $\widetilde{\mathfrak P}(2k+3)$. Form another new hook $H'$ of $\widetilde\pi$ whose southwest endpoint is $\widetilde{\mathfrak P}(\ell)$ and whose northeast endpoint is the northeast endpoint of $H''$. Removing the hook $H''$, we obtain the canonical hook configuration $\widetilde{\mathcal H}$ of $\widetilde\pi$. 

We need to show that there is at most one (equivalently, exactly one) operation from \eqref{Eq18} that sends $\pi$ to $\widetilde\pi$. As before, let $\widetilde{\mathfrak P}(\ell-1)$ be the southwest endpoint of the hook in $\widetilde{\mathcal H}$ whose northeast endpoint is $\widetilde{\mathfrak P}(2k+3)$. If $\ell-1=2k+1$, then the only such operations that are possible are $\mathfrak u'$ and $\mathfrak v_1'$. The permutations $\mathfrak u'(\pi)$ and $\mathfrak v_1'(\pi)$ are distinct, so there is at most one operation that sends $\pi$ to $\widetilde\pi$. Now suppose $\ell-1\neq 2k+1$. Appealing to the construction in the preceding paragraph, we find that $\mathfrak P(\ell-1)$ is in the skyline of $\pi$, so we can let $\mathscr H_{i,j}$ be the hook with southwest endpoint $\mathfrak P(\ell-1)$ (meaning $\mathfrak P(\ell)=\mathfrak Y_{i,j}$). We must have either $\mathfrak o'=\mathfrak v_{i+1}'$ (meaning $\mathfrak P(\ell-1)=\mathfrak X_{i+1}$ is nonconjoined) or $\mathfrak o'=\mathfrak w_{i,j}'$. If $\mathfrak P(\ell-1)$ is conjoined, then the only possibility is $\mathfrak w_{i,j}'$. If $\mathfrak P(\ell)$ is nonconjoined, then the permutations $\mathfrak v_i'(\pi)$ and $\mathfrak w_{i,j}'(\pi)$ are distinct. In this case, there is again only one operation that sends $\pi$ to $\widetilde\pi$. 
\end{proof}

We are now in a position to describe our generating tree. Lemmas \ref{Lem8} and \ref{Lem7} tell us that every permutation in $\mathcal U_{2k+3}(312,1342)$ is generated by a unique permutation in $\mathcal U_{2k+1}(312,1342)$ and that the number of permutations in $\mathcal U_{2k+3}(312,1342)$ that a permutation $\pi\in\mathcal U_{2k+1}(312,1342)$ generates is the number of operations listed in \eqref{Eq18}. The number of operations in \eqref{Eq18} is precisely the value of the label $b(\pi)$. In the proof of Lemma \ref{Lem7}, we described the procedure that produces the canonical hook configuration of $\widetilde\pi$ from that of $\pi$ when $\widetilde\pi$ is a permutation that $\pi$ generates. Tracing through this procedure, we find the following: 
\begin{itemize}
\item $|F_1(\mathfrak u'(\pi))|=|F_1(\pi)|+1$ and $|F_2(\mathfrak u'(\pi))|=|F_2(\pi)|+1$;
\item $|F_1(\mathfrak v_i'(\pi))|=|F_1(\pi)|-(i-1)$ and $|F_2(\mathfrak v_i'(\pi))|=|F_2(\pi)|+1-\sum_{r=1}^{i-1}m_r$; 
\item $|F_1(\mathfrak w_{i,j}'(\pi))|=|F_1(\pi)|-(i-1)$ and $|F_2(\mathfrak w_{i,j}'(\pi))|=|F_2(\pi)|-\sum_{r=1}^{i-1}m_r-(j-1)$. 
\end{itemize} Hence, we have $b(\mathfrak u'(\pi))=b(\pi)+2$, $b(\mathfrak v_i'(\pi))=b(\pi)+1-(i-1)-\sum_{r=1}^{i-1}m_r$, and $b(\mathfrak w_{i,j}'(\pi))=b(\pi)-(i-1)-\sum_{r=1}^{i-1}m_r-(j-1)$. It is now straightforward to check that for every integer $p$ with $3-b(\pi)\leq p\leq 2$, there is a unique operation that generates from $\pi$ a permutation $\widetilde\pi\in\mathcal U_{2k+3}(312,1342)$ with $b(\widetilde\pi)=b(\pi)+p$. In summary, a generating tree of the combinatorial class $\mathcal U(312,1342)$ is
\begin{equation}\label{Eq14}
\text{Axiom: }(3)\qquad\text{Rule: }(m)\leadsto(3)(4)\cdots(m+2)\quad\text{for every }m\in\mathbb N.
\end{equation}

Of course, \eqref{Eq13} and \eqref{Eq14} are identical. Thus, there is a natural isomorphism\footnote{We haven't formally defined ``isomorphisms" of generating trees, but we expect the notion will be apparent.} between the generating trees of intervals in noncrossing partition lattices and uniquely sorted permutations avoiding $312$ and $1342$. In fact, this isomorphism is unique. Finally, we obtain the bijections $\Upsilon_k:\mathcal U_{2k+1}(312,1342)\to\Int(\NC_k)$ from this isomorphism of generating trees in the obvious fashion, proving Theorem \ref{Thm5}. Throughout this section, we have chosen our notation in an attempt to make certain aspects of this bijection apparent. For example, if $\Upsilon_k(\pi)=(\rho,\kappa)$, then the nonconjoined hooks in the skyline of $\pi$ correspond to the exposed blocks in $\kappa$. Similarly, the hooks in the skyline of $\pi$ correspond to the exposed blocks in $\rho$ that are contained in exposed blocks of $\kappa$. 

Using the equation \eqref{Eq3}, we obtain the following corollary. 

\begin{corollary}\label{Cor3}
For each nonnegative integer $k$, \[|\mathcal U_{2k+1}(312,1342)|=\frac{1}{2k+1}{3k\choose k}.\]
\end{corollary}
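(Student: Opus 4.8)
The statement to be proven is Corollary~\ref{Cor3}, which asserts the formula $|\mathcal U_{2k+1}(312,1342)| = \frac{1}{2k+1}\binom{3k}{k}$. The plan is essentially to combine the two main results that precede it in this section. By Theorem~\ref{Thm5}, there is a bijection $\Upsilon_k : \mathcal U_{2k+1}(312,1342) \to \Int(\NC_k)$, so $|\mathcal U_{2k+1}(312,1342)| = |\Int(\NC_k)|$. By equation~\eqref{Eq3} (Kreweras's theorem, reproven via the generating tree \eqref{Eq13} as noted after that display), $|\Int(\NC_k)| = \frac{1}{2k+1}\binom{3k}{k}$. Chaining these two equalities gives the claim.

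First I would invoke Theorem~\ref{Thm5} to get the equality of cardinalities $|\mathcal U_{2k+1}(312,1342)| = |\Int(\NC_k)|$. Then I would cite \eqref{Eq3}, recalling that $|\Int(\NC_k)| = |\Int(\mathcal L_k^K)|$ since the Kreweras lattice is isomorphic to the noncrossing partition lattice, and that this quantity equals $\frac{1}{2k+1}\binom{3k}{k}$. Substituting yields the formula. One should also note that the case $k=0$ is handled by the stated convention $\NC_0 = \{\emptyset\}$ and the trivial case of Theorem~\ref{Thm5}, so the formula holds for every nonnegative integer $k$ (both sides equal $1$ when $k=0$).

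There is genuinely no obstacle here: the corollary is a one-line consequence of Theorem~\ref{Thm5} together with the classical count \eqref{Eq3}. Indeed, the identification of the generating tree \eqref{Eq14} for $\mathcal U(312,1342)$ with the generating tree \eqref{Eq13} for $\Int(\NC_k)$ — the substantive work of the section — already shows, via the result of \cite{Banderier} that \eqref{Eq13} enumerates the $3$-Catalan numbers, that $\mathcal U_{2k+1}(312,1342)$ is counted by $\frac{1}{2k+1}\binom{3k}{k}$. So the proof is simply: ``Combine Theorem~\ref{Thm5} with equation~\eqref{Eq3}." If more detail were desired, I would spell out that the generating tree \eqref{Eq14} directly gives the count, bypassing even the reference to \eqref{Eq3}, but citing \eqref{Eq3} is the cleanest route given the section's framing.
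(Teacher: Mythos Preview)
Your proposal is correct and matches the paper's approach exactly: the paper derives the corollary by combining Theorem~\ref{Thm5} with equation~\eqref{Eq3}, just as you describe.
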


\section{Pallo Comb Intervals and $\mathcal U_{2k+1}(231,4132)$}\label{Sec:Pallo}
Aval and Chapoton showed how to decompose the intervals in Pallo comb posets in order to obtain the identity \eqref{Eq15}. In this section, we show how to decompose permutations in $\mathcal U_{2k+1}(231,4132)$ in order to obtain a similar identity that proves these permutations are in bijection with Pallo comb intervals. 

\begin{theorem}\label{Thm6}
We have \[\sum_{k\geq 0}|\mathcal U_{2k+1}(231,4132)|x^k=C(xC(x)),\] where $C(x)=\dfrac{1-\sqrt{1-4x}}{2x}$ is the generating function of the sequence of Catalan numbers. 
\end{theorem}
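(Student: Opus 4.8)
The plan is to mirror Aval and Chapoton's decomposition of Pallo comb intervals: I will produce a recursive structural decomposition of the permutations in $\mathcal U_{2k+1}(231,4132)$ and convert it into a functional equation for $B(x):=\sum_{k\ge 0}|\mathcal U_{2k+1}(231,4132)|\,x^{k}$ that forces $B(x)=C(xC(x))$. The argument begins with two reductions. Since the stack-sorting map always outputs the largest entry last, every sorted permutation ends in its maximum; hence a nonempty $\pi\in\mathcal U_{2k+1}(231,4132)$ is uniquely of the form $\pi=\pi^{\circ}\oplus 1$ with $\pi^{\circ}\in\Av_{2k}(231,4132)$ and $\des(\pi^{\circ})=\des(\pi)=k$. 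Recalling the deficiency statistic $\de$ introduced in the proof of Lemma~\ref{Lem6}, one checks (using Theorem~\ref{Thm1} and Proposition~\ref{Prop1}) that $\pi\in\mathcal U_{2k+1}(231,4132)$ if and only if $\pi^{\circ}\in\Av_{2k}(231,4132)$, $\des(\pi^{\circ})=k$, and $\de(\pi^{\circ})\le 1$. Next, $231$-avoidance of $\pi^{\circ}$ lets us split it at its largest entry as $\pi^{\circ}=\alpha\oplus(1\ominus\beta)$, where all entries of $\alpha$ are smaller than all entries of $\beta$, $\des(\pi^{\circ})=\des(\alpha)+\des(\beta)+[\beta\neq\emptyset]$, and $\pi^{\circ}$ avoids $4132$ exactly when $\alpha$ avoids $4132$ and $\beta$ avoids $132$; in particular $\beta\in\Av(231,132)$, a very rigid family.

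The technical heart is to pin down, via Proposition~\ref{Prop1} (sortedness $=$ existence of a canonical hook configuration) together with the behavior of deficiency under $\oplus$ and $\ominus$ — inequality \eqref{Eq10} and the exact identity \eqref{Eq17}, $\de(\tau\ominus\sigma)=\de(\tau)+\de(\sigma)+1$ — exactly which pairs $(\alpha,\beta)$ can occur, with which values of $\de(\alpha)$ and $\de(\beta)$, subject to $\des(\alpha)+\des(\beta)+[\beta\neq\emptyset]=k$ and $\de(\pi^{\circ})\le 1$. Carrying this bookkeeping through the recursion should reveal that a nonempty $\pi\in\mathcal U_{2k+1}(231,4132)$ is equivalent to the data of a Catalan-counted gadget of some reduced size $a-1$ (a Dyck path, or equivalently a $231$-avoiding permutation in $S_{a-1}$), contributing the factor $xC(x)=\sum_{a\ge 1}C_{a-1}x^{a}$, together with two permutations each lying in some $\mathcal U_{2j+1}(231,4132)$ — the two recursive $B$-factors arising from the $\alpha$-side and the $\beta$-side (equivalently, the two halves cut off by the deficient descent top). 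This yields the functional equation $B=1+xC(x)\,B^{2}$.

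To conclude, note that $F(x):=C(xC(x))$ satisfies the same equation: substituting $y=xC(x)$ into the Catalan identity $C(y)=1+yC(y)^{2}$ gives $F=1+xC(x)F^{2}$. Since $Z=1+xC(x)Z^{2}$ is a quadratic in $Z$ with a unique formal power series solution having constant term $1$ (namely $\frac{1-\sqrt{1-4xC(x)}}{2xC(x)}=C(xC(x))$), we get $B=F=C(xC(x))$, which is \eqref{Eq15} with $\mathcal U_{2k+1}(231,4132)$ in place of $\Int(\PC_k)$. I expect the main obstacle to be the second paragraph: sortedness is preserved by neither $\oplus$ nor $\ominus$, so one cannot simply recurse on ``sorted'' and must instead thread the deficiency statistic and the descent count through every step of the decomposition, and in particular establish the precise Catalan enumeration of the pieces that generate the $xC(x)$ factor. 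A secondary subtlety is confirming that the $4132$-restriction is compatible with this bookkeeping — it should force the right-hand block $\beta$ to avoid $132$ without interfering with the deficiency analysis elsewhere.
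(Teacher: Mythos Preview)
Your framing is sound at both ends: the reduction $\pi=\pi^{\circ}\oplus 1$ with $\de(\pi^{\circ})\le 1$ and $\des(\pi^{\circ})=k$ is correct, and the final paragraph showing that $B=1+xC(x)B^{2}$ pins down $B=C(xC(x))$ is exactly right. The gap is in the middle, and it is not merely that the bookkeeping is unfinished --- the specific split you propose does not produce the pieces you need.

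Splitting $\pi^{\circ}$ at its maximum gives $\pi^{\circ}=\alpha\oplus(1\ominus\beta)$ with $\alpha\in\Av(231,4132)$ and $\beta\in\Av(132,231)$, but neither $\alpha$ nor $\beta$ is in general a uniquely sorted permutation of the right class, so there is no direct way to read off two $B$-factors. Concretely, take $\pi=32145\in\mathcal U_{5}(231,4132)$: then $\pi^{\circ}=3214$, the maximum sits at the far right, $\beta=\emptyset$, and $\alpha=321$ has length $3$ with $2$ descents --- too many for membership in any $\mathcal U_{2j+1}$. The parenthetical ``the two halves cut off by the deficient descent top'' points to a different split, and that one (not the maximum) is the one that works: if $\de(\pi^{\circ})=1$, the unique deficient descent top of $\pi^{\circ}$ is precisely the southwest endpoint of the hook whose northeast endpoint is $(2k+1,2k+1)$ in $\pi$, and splitting there is what the paper does.

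The paper's argument is a genuine two-stage decomposition. First it cuts $\pi$ at that southwest endpoint $(i,\pi_{i})$, using $231$-avoidance to write $\pi=\sigma\oplus\sigma'$ with $\sigma''=\sigma\,\pi_{i}$ (normalized) landing in $\mathcal U(231,4132)$ --- that is one $B$-factor --- and $\sigma'$ a ``nice'' permutation (one whose leftmost point is the southwest endpoint of the hook to the top-right corner). Second, inside a nice $\sigma'=\sigma'_{1}\,\lambda\,\mu\,(2k+1)$, the $4132$-restriction forces $\lambda\in\Av(132,231)$; one then extracts from $\lambda$ the \emph{maximal} initial piece $\tau\in\mathcal U_{2m+1}(132,231)$, which by Lemma~\ref{Lem9} is Catalan-counted and supplies the $xC(x)$ factor, and proves that what remains normalizes to a permutation in $\mathcal U_{2k-2m-1}(231,4132)$ --- the second $B$-factor. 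The subtle step you would still owe even after fixing the split point is this maximality argument and the verification (via explicit hook surgery) that removing $\tau$ leaves a sorted permutation with the correct descent count; the inequalities \eqref{Eq10} and \eqref{Eq17} alone are too coarse for that.
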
 

We know that the Tamari lattice $\mathcal L_k^T$ is an extension of the Pallo comb poset $\PC_k$. If we combine Theorems \ref{Thm3} and \ref{Thm4}, we find a bijection $\DL_k\circ\swl\circ\swu$ from $\mathcal U_{2k+1}(231,4132)$ to a subset of $\Int(\mathcal L_k^T)$. One might hope to prove Theorem \ref{Thm6} by showing that this subset is precisely $\Int(\PC_k)$ and then invoking \eqref{Eq15}. Unfortunately, this is not the case when $k=3$ (and probably also when $k\geq 4$). For example, $2154367\in\mathcal U_7(231,4132)$, but $\DL_3\circ\swl\circ\swu(2154367)=(UUDDUD,UUUDDD)\not\in\Int(\PC_3)$ (see Figure \ref{Fig2}). Before we can prove Theorem \ref{Thm6}, we need the following lemma.

\begin{lemma}\label{Lem9}
For each nonnegative integer $k$, $|\mathcal U_{2k+1}(132,231)|=C_k$. 
\end{lemma}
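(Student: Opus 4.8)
The plan is to characterize the permutations in $\mathcal U_{2k+1}(132,231)$ explicitly and then exhibit an obvious bijection with Dyck paths. A permutation $\pi$ that avoids both $132$ and $231$ has the property that every entry is either larger than all entries to its right or smaller than all entries to its right; equivalently, reading from left to right, each entry is either a left-to-right maximum or a left-to-right minimum of the suffix it begins. Such permutations are built from the two one-element classes by repeatedly prepending either a new largest entry or a new smallest entry; the last entry is forced to be $1$ or $n$, and so on recursively. So $\Av_n(132,231)$ is in bijection with $\{0,1\}$-strings of length $n-1$ (recording at each step whether we prepend a max or a min), and $|\Av_n(132,231)|=2^{n-1}$.

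Next I would intersect this structure with the uniquely sorted condition. By Theorem~\ref{Thm1}, $\pi\in\mathcal U_{2k+1}$ iff $\pi$ is sorted and $\des(\pi)=k$, and by Proposition~\ref{Prop1} being sorted is equivalent to possessing a canonical hook configuration. For a $\{132,231\}$-avoider, I claim the descents are exactly the positions where we prepended a new smallest entry that is immediately followed (in the final word) by a previously placed larger entry — more usefully, I would compute $\des$ directly in terms of the prepend-string. Then I would analyze when such a permutation has a canonical hook configuration: because the plot is so constrained (at each point, everything to the right is either entirely above or entirely below), the canonical hook configuration construction either goes through cleanly or fails in an easily-detected way, and the sorted ones with exactly $k$ descents among the $2^{2k}$ total $\{132,231\}$-avoiders of length $2k+1$ should be exactly $C_k$ of them. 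Concretely, I expect uniquely sorted $\{132,231\}$-avoiders of length $2k+1$ to correspond to $\{0,1\}$-strings of length $2k$ with exactly $k$ ones and satisfying a ballot-type prefix condition (every prefix has at least as many of one symbol as the other), which is precisely a Dyck path of semilength $k$.

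An alternative, and probably cleaner, route: rather than doing the hook-configuration bookkeeping from scratch, use results already in the paper. Section~\ref{Sec:Antichain} is stated to contain a bijection $\mathcal U_{2k+1}(132,231)\to\Int(\mathcal A_k)$ and to prove $|\mathcal U_{2k+1}(132,231)|=C_k$; but since this lemma is being used \emph{before} that section, I would instead combine Lemma~\ref{Lem10} with Theorem~\ref{Thm3}-style reasoning. Lemma~\ref{Lem10} gives bijections $\swu:\Av(231,312)\to\Av(132,312)$ and $\swl\circ(\text{stuff}):\Av(132,231)\to\Av(231,312)$; chaining these with Lemma~\ref{Lem1} (which shows the sliding maps preserve the descent count, at least on the relevant avoidance classes) and Lemma~\ref{Lem6} (which shows $\swl$ preserves sortedness on $132$-avoiders), one transports $\mathcal U_{2k+1}(132,231)$ bijectively onto $\mathcal U_{2k+1}(132,312)$ (or $\mathcal U_{2k+1}(231,312)$), and a short induction using the $\oplus$/$\ominus$ decomposition forced by the double avoidance shows that class is counted by $C_k$ via the standard Catalan recursion: writing $\pi=\mu\ominus(\lambda\oplus 1)$ for a $\{132,231\}$-avoider, track how $\des$ and sortedness split across $\mu$ and $\lambda$, and read off the generating-function identity $A(x)=1+xA(x)^2$ with $A(x)=\sum_k |\mathcal U_{2k+1}(132,231)|x^k$.

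The main obstacle is pinning down exactly how the sorted/uniquely-sorted condition interacts with the recursive $\oplus$/$\ominus$ decomposition of a $\{132,231\}$-avoider: equation~\eqref{Eq17} says $\de(\tau\ominus\sigma)=\de(\tau)+\de(\sigma)+1$, so a skew sum of two sorted permutations is \emph{never} sorted unless handled carefully, which means the decomposition that actually preserves sortedness must be chosen with the correct orientation (a direct sum on the outside, with the "$\ominus 1$" piece contributing one descent and one deficiency-cancellation). Getting this sign/parity bookkeeping exactly right — so that the recursion closes to give the Catalan generating function $A(x)=1+xA(x)^2$ rather than something off by a shift — is the delicate part; everything else is routine pattern-avoidance structure theory and an appeal to Theorem~\ref{Thm1}.
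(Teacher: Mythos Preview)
Your overall plan in the first paragraph --- encode $\{132,231\}$-avoiders of length $2k+1$ by binary strings of length $2k$ and then show that the uniquely sorted ones correspond exactly to Dyck paths --- is precisely the paper's approach. However, your structural characterization of $\Av(132,231)$ is wrong, and this is a genuine gap. You claim that every entry is either larger than everything to its right or smaller than everything to its right (so that the permutation is built by repeatedly prepending a new max or a new min). That describes $\Av(213,231)$, not $\Av(132,231)$: for instance $3124\in\Av(132,231)$, but $\pi_1=3$ is neither the maximum nor the minimum of its suffix $\{1,2,4\}$. The correct characterization (and the one the paper uses) is that $\pi$ avoids $132$ and $231$ if and only if it is a decreasing sequence followed by an increasing sequence, i.e.\ $\pi=L\,1\,R$ with $L$ decreasing and $R$ increasing. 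The paper then records, for each value $2,\ldots,2k+1$ from largest to smallest, whether it lies in $R$ (write $U$) or in $L$ (write $D$); one checks directly that $\des(\pi)=k$ iff there are $k$ copies of $D$, and that the canonical hook configuration exists iff every prefix has at least as many $U$'s as $D$'s. With your mistaken ``prepend max/min'' picture, the descent and sortedness bookkeeping would not line up.

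Your alternative routes have their own problems. Transporting via $\swl$ to $\mathcal U_{2k+1}(231,312)$ and appealing to Section~\ref{Sec:Antichain} is circular in the paper's logical order: Theorem~\ref{Thm7} invokes Lemma~\ref{Lem9}, not the other way around (one could reorganize and prove Theorem~\ref{Thm9} first, since its proof is independent, but you would need to say so explicitly). And the proposed recursion via $\pi=\mu\ominus(\lambda\oplus 1)$ does not fit: a $\{132,231\}$-avoider is a single valley $L\,1\,R$, so the largest entry is always at one end, and the $\ominus/\oplus$ split degenerates rather than producing the Catalan recursion $A(x)=1+xA(x)^2$. The clean route is the direct Dyck-path encoding from the correct ``valley'' description.
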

\begin{proof}\hspace{-.15cm}\footnote{One could alternatively prove this lemma by showing that $\DL_k\circ\swl:\mathcal U_{2k+1}(132,231)\to\Int(\mathcal A_k)$ is a bijection.}
It is known that $|\Av_{2k+1}(132,231)|=2^{2k}$. One way to prove this is to first observe that a permutation avoids $132$ and $231$ if and only if it can be written as a decreasing sequence followed by an increasing sequence. Given $\pi\in\Av_{2k+1}(132,231)$, we can write $\pi=L1R$, where $L$ is decreasing and $R$ is increasing. Let $w_\ell=U$ if $2k+2-\ell$ is an entry in $R$, and let $w_\ell=D$ if $2k+2-\ell$ is an entry in $L$. We obtain a word $w=w_1\cdots w_{2k}\in\{U,D\}^{2k}$. The map $\pi\mapsto w$ is a bijection between $\Av_{2k+1}(132,231)$ and $\{U,D\}^{2k}$. The permutation $\pi$ has exactly $k$ descents if and only if the letter $D$ appears exactly $k$ times in the corresponding word $w$. Furthermore, $\pi$ has a canonical hook configuration (meaning it is sorted) if and only if every prefix of $w$ contains at least as many occurrences of the letter $U$ as occurrences of $D$. Using Theorem \ref{Thm1}, we see that $\pi$ is uniquely sorted if and only if $w$ is a Dyck path. 
\end{proof}

\begin{proof}[Proof of Theorem \ref{Thm6}]
Let $B(x)=\sum_{k\geq 0}|\mathcal U_{2k+1}(231,4132)|x^k$ and $\widetilde B(x)=\sum_{n\geq 1}|\mathcal U_n(231,4132)|x^n$. Since there are no uniquely sorted permutations of even length, we have $\widetilde B(x)=xB(x^2)$. Therefore, our goal is to show that $\widetilde B(x)=E(x)$, where $E(x)=xC(x^2C(x^2))$. Using the standard Catalan functional equation $C(x)=1+xC(x)^2$, we find that $E(x)=x+xC(x^2)E(x)^2$. This last equation and the condition $E(x)=x+O(x^2)$ uniquely determine the power series $E(x)$. Since $\widetilde B(x)=x+O(x^2)$, we are left to prove that 
\begin{equation}\label{Eq16}
\widetilde B(x)=x+xC(x^2)\widetilde B(x)^2.
\end{equation}

The term $x$ in \eqref{Eq16} represents the permutation $1$. Now suppose $\pi\in\mathcal U_n(231,4132)$, where $n=2k+1\geq 3$. Proposition \ref{Prop1} tells us that $\pi$ has a canonical hook configuration $\mathcal H$, and Lemma~\ref{Lem2} tells us that the point $(2k+1,2k+1)$ is the northeast endpoint of a hook $H$ in $\mathcal H$. Let $(i,\pi_i)$ be the southwest endpoint of $H$. We will say $\pi$ is \emph{nice} if $i=1$. Let us first consider the case in which $\pi$ is nice. 

Because $\pi$ avoids $231$, we can write $\pi=\pi_1\,\lambda\,\mu\,(2k+1)$, where $\lambda\in S_{\pi_1-1}$ and $\mu$ is a permutation of $\{\pi_1+1,\ldots,2k\}$. Since $\pi$ avoids $231$ and $4132$, $\lambda$ avoids $132$ and $231$. As mentioned in the proof of Lemma \ref{Lem9}, this forces $\lambda$ to be a decreasing sequence followed by an increasing sequence. Let $m$ be the largest integer such that the subpermutation $\tau$ of $\lambda$ formed by the entries $1,2,\ldots,2m+1$ is in $\mathcal U_{2m+1}(132,231)$. We can write $\lambda=L\,\tau\, R$, where $\tau\in \mathcal U_{2m+1}(132,231)$, $L$ is decreasing, and $R$ is increasing. Consider the point $\mathfrak Q$ in the plot of $\pi$ whose height is the first entry in $R$. This point is not a descent bottom, so it follows from Lemma~\ref{Lem2} that it is the northeast endpoint of a hook $\mathcal H$. All of the hooks with southwest endpoints in $\tau$ have their northeast endpoints in $\tau$ because $\tau$ is uniquely sorted (restricting $\mathcal H$ to the plot of $\tau$ yields the canonical hook configuration of $\tau$). This means that the hook whose northeast endpoint is $\mathfrak Q$ has its southwest endpoint in the plot of $L$. Since $\mathfrak Q$ is also the lowest point in the plot of $R$, this shows that the smallest entry in $L$ is smaller than the smallest entry in $R$.

We claim that the permutation
$\pi'=\pi_1\,L\,R\,\mu$ is a uniquely sorted permutation that avoids $231$ and $4132$. Because the smallest entry in $L$ is smaller than the smallest entry in $R$, it is straightforward to check that $\pi'$ is a permutation of length $2k-2m-1$ that avoids $231$ and $4132$ and has exactly $k-m-1$ descents; we need to show that it has a canonical hook configuration $\mathcal H'$. We obtain $\mathcal H'$ from the canonical hook configuration $\mathcal H=(H_1,\ldots,H_k)$ of $\pi$ as follows. Let $\ell$ be the length of $L$. For all $r\in\{1,\ldots,\ell+m+1\}$, the southwest endpoint of $H_r$ is $(r,\pi_r)$. For $r\in\{1,\ldots,\ell\}$, let $H_r'$ be the hook of $\pi'$ with southwest endpoint $(r,\pi_r)=(r,\pi_r')$ whose northeast endpoint has the same height as the northeast endpoint of $H_{r+1}$. For $r\in\{\ell+m+2,\ldots,k\}$, let $H_r'$ be the hook of $\pi'$ whose southwest and northeast endpoints have the same heights as the southwest and northeast endpoints of $H_r$, respectively. The canonical hook configuration of $\pi'$ is $\mathcal H'=(H_1',\ldots,H_\ell',H_{\ell+m+2}',\ldots,H_k')$. See Figure \ref{Fig12} for an example of this construction. 

\begin{figure}[h]
\begin{center}
\includegraphics[width=.6\linewidth]{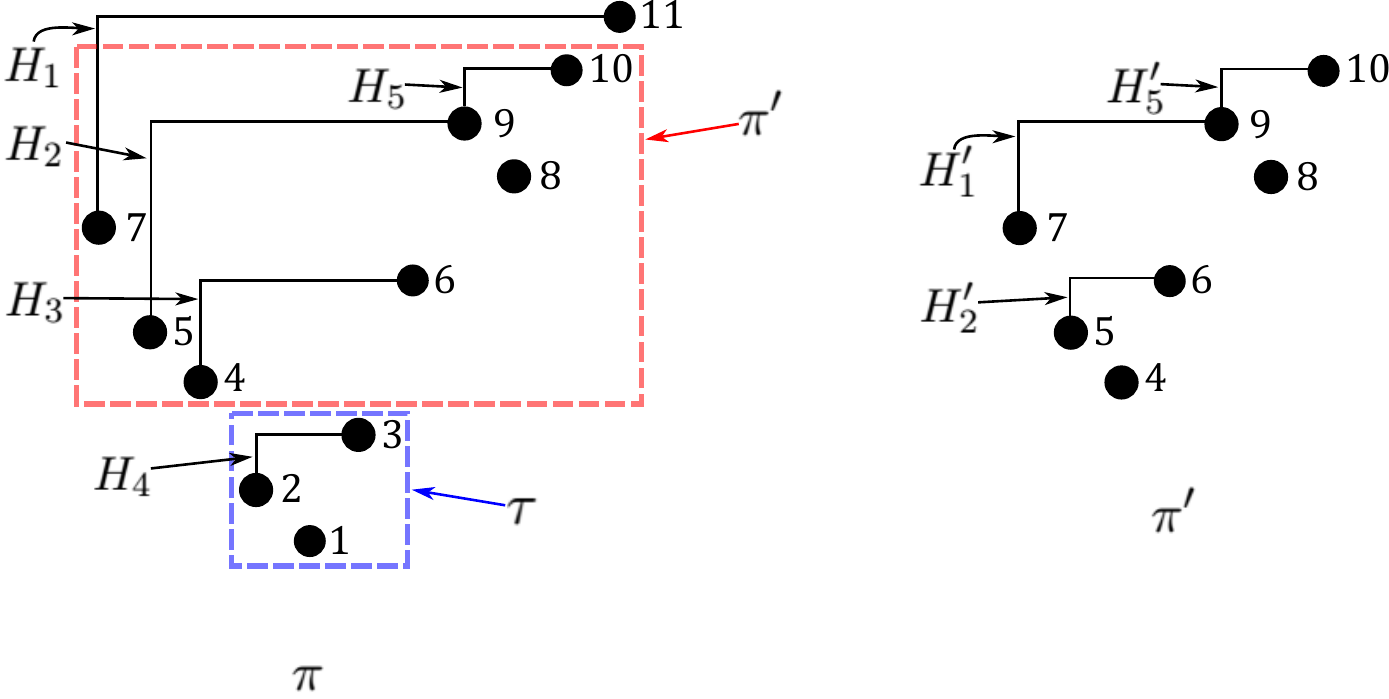}
\caption{Decomposing the nice permutation $\pi$ into pieces. In this example, $k=5$, $\ell=2$, and $m=1$.}
\label{Fig12}
\end{center}  
\end{figure}

Because the last (and smallest) entry in $L$ is smaller than the first (and smallest) entry in $R$, the last entry in $L$ is $2m+2$. This is also the smallest entry in $\pi'$. Letting $\pi''$ denote the normalization of $\pi'$, we have obtained from $\pi$ the pair $\alpha(\pi):=(\pi'',\tau)\in \mathcal U_{2k-2m-1}(231,4132)\times\mathcal U_{2m+1}(132,231)$. 

We can reverse this procedure. If we are given $(\pi'',\tau)\in \mathcal U_{2k-2m-1}(231,4132)\times\mathcal U_{2m+1}(132,231)$, then we can increase all of the entries in $\pi''$ by $2m+1$ to form $\pi'$. We then insert $\tau$ after the smallest entry in $\pi'$ and append the entry $2k+1$ to the end to form the permutation $\pi$. 
This construction cannot produce a new $231$ pattern or a  new $4132$ pattern, so the resulting permutation $\pi$ avoids these patterns. The resulting permutation also has $k$ descents. The canonical hook configuration of $\pi$ is obtained from those of $\pi''$ and $\tau$ by reversing the above procedure. Furthermore, this construction forces the leftmost point in the plot of $\pi$ to be the southwest endpoint of a hook with northeast endpoint $(2k+1,2k+1)$. Thus, the permutation $\pi$ obtained by combining $\pi''$ and $\tau$ is nice and is in $\mathcal U_{2k+1}(231,4132)$. We need to show that $\alpha(\pi)=(\pi'',\tau)$. To this end, choose some $m'>m$ such that $2m'+1<\pi_1$. Because $\pi$ avoids $231$ and $4132$, the subpermutation of $\pi$ consisting of the entries $1,\ldots,2m'+1$ is a decreasing sequence followed by an increasing sequence. Let us write it as the concatenation $\widehat L\,(2m+2)\,\widetilde L\,1\,\widetilde R\,\widehat R$, where $\widetilde L\,1\,\widetilde R=\tau$. The entries in $\widetilde L$ correspond to the southwest endpoints of the hooks in the canonical hook configuration of $\tau$, and the northeast endpoints of these hooks correspond to the entries in $\widetilde R$. The entries in $\widehat L$ correspond to (some of the) southwest endpoints of the hooks in the canonical hook configuration of $\pi'$, and the entries in $\widetilde R$ correspond to a subset of the northeast endpoints of these hooks. Thus, $|\widetilde L|=|\widetilde R|$ and $|\widehat L|\geq |\widehat R|$. It follows that the permutation $\widehat L\,(2m+2)\,\widetilde L\, 1\,\widetilde R\,\widehat R$, which consists of the entries $1,\ldots,2m'+1$, has more than $m'$ descents; thus, it is not uniquely sorted. This means that $m$ is in fact the largest integer with $2m+1<\pi_1$ such that the subpermutation of $\pi$ consisting of the entries $1,\ldots,2m+1$ is uniquely sorted. This subpermutation is precisely $\tau$, so $\alpha(\pi)=(\pi'',\tau)$. Lemma \ref{Lem9} tells us that $\sum_{n\geq 1}|\mathcal U_n(132,231)|x^n=xC(x^2)$, so it follows that the generating function that counts nice permutations in $\mathcal U(231,4132)$ is $x^2C(x^2)\widetilde B(x)$.

We now consider the general case in which $\pi$ is not necessarily nice (see the left side of Figure~\ref{Fig13} for an example). Let $\sigma=\pi_1\cdots\pi_{i-1}$, and let $\sigma'$ be the normalization of $\pi_i\cdots\pi_{2k+1}$. Let $\sigma''$ be the normalization of $\sigma\pi_i$. If there were an index $\delta\in\{1,\ldots,i-1\}$ such that $\pi_\delta>\pi_i$, then we could choose this $\delta$ maximally and see that $(\delta,\pi_\delta)$ is a descent top of the plot of $\pi$. The point $(\delta,\pi_{\delta})$ would be the southwest endpoint of a hook, and this hook would have to lie above $H$, the hook with southwest endpoint $(i,\pi_i)$. However, the northeast endpoint of $H$ is $(2k+1,2k+1)$, so this is impossible. Therefore, every entry in $\sigma$ is smaller than $\pi_i$. Because $\pi$ avoids $231$, $\pi=\sigma\oplus\sigma'$ (so $\sigma''=\sigma i$). The permutation $\sigma''$ is in $\mathcal U_{i}(231,4132)$. The permutation $\sigma'$ is a nice permutation in $\mathcal U(231,4132)$. If we were given the permutation $\sigma''\in\mathcal U(231,4132)$ and the nice permutation $\sigma'\in\mathcal U(231,4132)$, then we could easily reobtain $\pi$ by first deleting the last entry of $\sigma''$ to form $\sigma$ and then writing $\pi=\sigma\oplus\sigma'$. It follows that $\widetilde B(x)-x=\frac{1}{x}(x^2C(x^2)\widetilde B(x))\widetilde B(x)=xC(x^2)\widetilde B(x)^2$, which is \eqref{Eq16} (the $\frac 1x$ comes from the fact that $\pi_1\cdots\pi_i$ and $\pi_i\cdots\pi_{2k+1}$ overlap in the entry $\pi_i$). 
\end{proof}

\begin{figure}[h]
\begin{center}
\includegraphics[width=.9\linewidth]{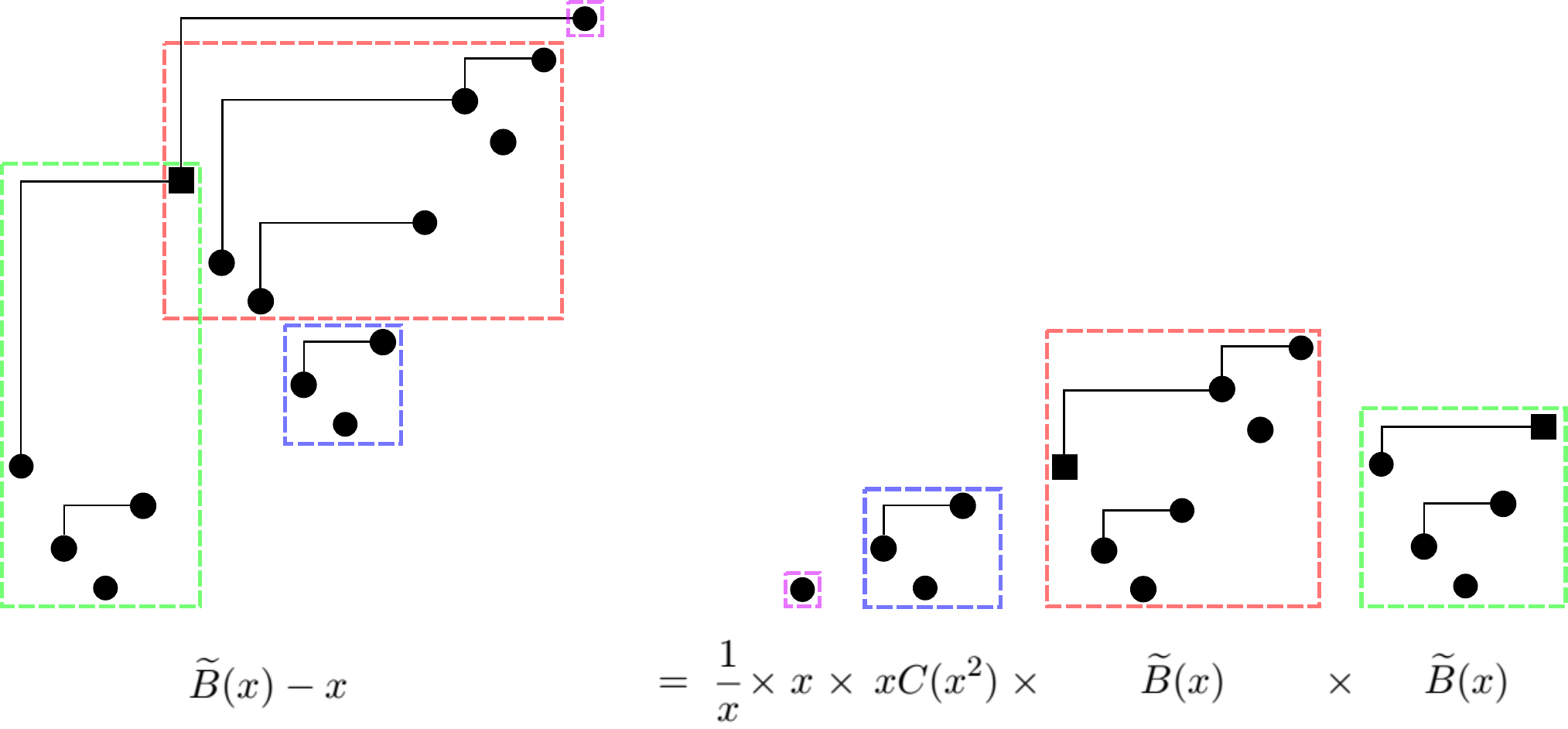}
\caption{An illustration of the equation $\widetilde B(x)-x=xC(x^2)\widetilde B(x)^2$ from the proof of Theorem \ref{Thm6}. The factor $\frac 1x$ comes from the fact that the point marked with the square appears twice on the right-hand side.}
\label{Fig13}
\end{center}  
\end{figure}

\section{Catalan Antichain Intervals}\label{Sec:Antichain}
In this section, we prove that \[|\mathcal U_{2k+1}(321)|=|\mathcal U_{2k+1}(231,312)|=|\mathcal U_{2k+1}(132,231)|=|\mathcal U_{2k+1}(132,312)|=C_k.\] These results fit into the theme of this article if we interpret $C_k$ as the number of intervals in the antichain $\mathcal A_k$. 

\begin{theorem}\label{Thm8}
For each nonnegative integer $k$, we have $|\mathcal U_{2k+1}(321)|=C_k$. 
\end{theorem}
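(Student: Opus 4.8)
The plan is to pin down the precise shape of a $321$-avoiding uniquely sorted permutation and thereby produce an explicit bijection $\mathcal U_{2k+1}(321)\to{\bf D}_k$; since $\mathcal A_k$ is an antichain, this simultaneously gives a bijection with $\Int(\mathcal A_k)$. (The case $k=0$ is trivial, so assume $k\geq 1$.) First I would establish the shape. Let $\pi\in\mathcal U_{2k+1}(321)$. By Theorem~\ref{Thm1}, $\pi$ is sorted and $\des(\pi)=k$, and since $\pi$ is sorted we have $\pi_{2k+1}=2k+1$. Because $\pi$ avoids $321$ it has no three consecutive decreasing entries, so no two of its descents are consecutive; also $2k$ is not a descent since $\pi_{2k}<\pi_{2k+1}$. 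A $k$-element subset of $\{1,\dots,2k-1\}$ with no two consecutive elements must equal $\{1,3,\dots,2k-1\}$ (if $x_1<\cdots<x_k$ are such, then $x_j\geq 2j-1$, forcing equality throughout), so $\Des(\pi)=\{1,3,\dots,2k-1\}$. Still using that $\pi$ avoids $321$: every descent top is a left-to-right maximum (a larger earlier entry would complete a $321$ with the descent bottom), no descent bottom is a left-to-right maximum, and the non-left-to-right maxima of $\pi$ form an increasing subsequence. Combined with $\pi_{2k+1}=2k+1$, this forces the left-to-right maxima of $\pi$ to occupy exactly the odd positions and the remaining entries the even positions; writing $a_i=\pi_{2i-1}$ and $b_i=\pi_{2i}$, we obtain $a_1<\cdots<a_{k+1}=2k+1$, $b_1<\cdots<b_k$, and $a_i>b_i$ for all $i\in[k]$.

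For the converse, suppose $a_1<\cdots<a_{k+1}=2k+1$ and $b_1<\cdots<b_k$ partition $[2k+1]$ with $a_i>b_i$ for all $i$, and set $\pi=a_1b_1a_2b_2\cdots a_kb_ka_{k+1}$. Then $\pi$ avoids $321$ (it is a union of two increasing subsequences) and has descent set $\{1,3,\dots,2k-1\}$, so $\des(\pi)=k$. The one real point is that $\pi$ is sorted, which by Proposition~\ref{Prop1} amounts to exhibiting its canonical hook configuration; I claim it is the chain of hooks $H_i$ from $(2i-1,a_i)$ to $(2i+1,a_{i+1})$, $i\in[k]$. Running the canonical hook configuration construction from $H_k$ down to $H_1$: once $H_k,\dots,H_{i+1}$ are placed, the only point in positions strictly between $2i-1$ and $2i+1$ is $(2i,b_i)$, which lies below the descent top $(2i-1,a_i)$ since $b_i<a_i$; and the odd-position point $(2i+1,a_{i+1})$ is not weakly below any $H_j$ with $j>i$ (the interior of such an $H_j$ sits over an even position, and the northeast endpoint $(2j+1,a_{j+1})$ of $H_j$ is not $(2i+1,a_{i+1})$). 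Hence $\mathfrak N_i=(2i+1,a_{i+1})$, the construction succeeds, and $\pi$ has a canonical hook configuration; by Theorem~\ref{Thm1}, $\pi\in\mathcal U_{2k+1}(321)$.

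Thus $\pi\mapsto(\{a_i\},\{b_i\})$ is a bijection from $\mathcal U_{2k+1}(321)$ onto the set of partitions of $[2k+1]$ into a set $A=\{a_1<\cdots<a_{k+1}\}$ with $2k+1\in A$ and a set $B=\{b_1<\cdots<b_k\}$ such that $b_i<a_i$ for all $i\in[k]$. Encoding such a partition by the word $w_1\cdots w_{2k}$ over $\{U,D\}$ with $w_v=U$ for $v\in B$ and $w_v=D$ for $v\in A$ (for $v=1,\dots,2k$; the value $2k+1\in A$ is dropped) yields a word with $k$ copies of $U$ and $k$ of $D$, and the requirement $b_i<a_i$ for all $i$ is exactly the ballot condition that every prefix of $w$ contains at least as many $U$'s as $D$'s, i.e., $w\in{\bf D}_k$. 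This gives the desired bijection, so $|\mathcal U_{2k+1}(321)|=|{\bf D}_k|=C_k$. The main obstacle is the first step—correctly forcing the descent set to be $\{1,3,\dots,2k-1\}$ and deducing the rigid ``two interleaved increasing runs'' shape; after that, the verification of the canonical hook configuration in the second step is the only computation, and it is routine.
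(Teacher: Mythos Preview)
Your proof is correct and follows essentially the same approach as the paper's: both first force $\Des(\pi)=\{1,3,\ldots,2k-1\}$ and deduce that the odd-position entries and the even-position entries each form increasing sequences, then biject to a Catalan family and verify the canonical hook configuration is the chain $(2i-1,\pi_{2i-1})\to(2i+1,\pi_{2i+1})$. The only cosmetic difference is the choice of target object: the paper sends $\pi$ to the nondecreasing parking function $(\pi_2-0,\pi_4-1,\ldots,\pi_{2k}-k+1)$, whereas you encode the partition $[2k+1]=A\sqcup B$ directly as a Dyck word.
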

\begin{proof}
A \emph{nondecreasing parking function of length $k$} is a tuple $(a_1,\ldots,a_k)$ of positive integers such that $a_1\leq\cdots\leq a_k$ and $a_i\leq i$ for all $i\in[k]$. It is well known that the number of nondecreasing parking functions of length $k$ is $C_k$. Given $\pi=\pi_1\cdots\pi_{2k+1}\in\mathcal U_{2k+1}(321)$, put $a_i=\pi_{2i}-i+1$. We claim that $(a_1,\ldots,a_k)$ is a nondecreasing parking function.

Note that $(2,\pi_2)$ is not the northeast endpoint of a hook in the canonical hook configuration of $\pi$. Lemma \ref{Lem2} tells us that $(2,\pi_2)$ is a descent bottom in the plot of $\pi$, so $1$ is a descent of $\pi$. Because $\pi$ is sorted, $\pi_{2k+1}=2k+1$. This implies that $2k$ is not a descent of $\pi$. Since $\pi$ avoids $321$, no two descents of $\pi$ are consecutive integers. We know by Theorem \ref{Thm1} that $\pi$ has $k$ descents, so these descents must be $1,3,5,\ldots,2k-1$. Choose $i\in[k]$. Since $\pi$ avoids $321$ and $\pi_{2i-1}>\pi_{2i}$, all of the elements of $[\pi_{2i}-1]$ appear to the left of $\pi_{2i}$ in $\pi$. Because $\pi_{2i-1}$ is an additional entry that appears to the left of $\pi_{2i}$ in $\pi$, we must have $2i-1\geq\pi_{2i}$. It follows that $a_i=\pi_{2i}-i+1\leq i$. If $i\in[k-1]$, then $\pi_{2i+2}\geq\pi_{2i}+1$ since $\pi$ avoids $321$ and $\pi_{2i-1}>\pi_{2i}$. This means that $a_{i+1}=\pi_{2i+2}-(i+1)+1\geq\pi_{2i}-i+1=a_i$.  As $i$ was arbitrary, $(a_1,\ldots,a_k)$ is a nondecreasing parking function. Notice also that if we construct the canonical hook configuration of $\pi$ using the construction described in Section \ref{Sec:StackBack}, we are forced to choose $(2i+1,\pi_{2i+1})$ as the northeast endpoint of the hook whose southwest endpoint is $(2i-1,\pi_{2i-1})$. This implies that $\pi_1<\pi_3<\cdots<\pi_{2k+1}$.

Given the nondecreasing parking function $(a_1,\ldots,a_k)$, we can reobtain the permutation $\pi$. Indeed, the values of $\pi_2,\pi_4,\ldots,\pi_{2k}$ are determined by the definition $a_i=\pi_{2i}-i+1$. The other entries of $\pi$ are determined by the fact that $\pi_1<\pi_3<\cdots<\pi_{2k+1}$. This is because $\pi_{2i-1}$ must be the $i^\text{th}$-smallest element of $[2k+1]\setminus\{\pi_2,\pi_4,\ldots,\pi_{2k}\}$. We want to check that the permutation $\pi$ obtained in this way is indeed in $\mathcal U_{2k+1}(321)$. One can easily check that this permutation avoids $321$ and has $k$ descents. We must show that it has a canonical hook configuration $\mathcal H=(H_1,\ldots,H_k)$. This is easy; $H_i$ is simply the hook with southwest endpoint $(2i-1,\pi_{2i-1})$ and northeast endpoint $(2i+1,\pi_{2i+1})$.    
\end{proof}

In the following theorems, recall the bijection $\DL_k:\mathcal U_{2k+1}(312)\to\Int(\mathcal L_k^S)$ from Theorem \ref{Thm2}. 

\begin{theorem}\label{Thm9}
For each nonnegative integer $k$, the restriction of $\DL_k$ to $\mathcal U_{2k+1}(231,312)$ is a bijection from $\mathcal U_{2k+1}(231,312)$ to $\Int(\mathcal A_k)$. Hence, $|\mathcal U_{2k+1}(231,312)|=C_k$.
\end{theorem}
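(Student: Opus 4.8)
The plan is to reduce everything to the single biconditional: \emph{for $\pi\in\mathcal U_{2k+1}(312)$, the permutation $\pi$ avoids $231$ if and only if $\DL_k(\pi)=(\Lambda,\Lambda')$ satisfies $\Lambda=\Lambda'$.} Granting this, the theorem follows quickly. An interval of the antichain $\mathcal A_k$ is precisely a pair $(\Lambda,\Lambda)$ with $\Lambda\in{\bf D}_k$, so $\Int(\mathcal A_k)=\{(\Lambda,\Lambda):\Lambda\in{\bf D}_k\}\subseteq\Int(\mathcal L_k^S)$. Since $\DL_k\colon\mathcal U_{2k+1}(312)\to\Int(\mathcal L_k^S)$ is a bijection by Theorem~\ref{Thm2} and $\mathcal U_{2k+1}(231,312)\subseteq\mathcal U_{2k+1}(312)$, the biconditional says exactly that $\DL_k$ carries $\mathcal U_{2k+1}(231,312)$ into $\Int(\mathcal A_k)$ and that every element of $\Int(\mathcal A_k)$ has its (unique) $\DL_k$-preimage inside $\mathcal U_{2k+1}(231,312)$; combined with the injectivity of $\DL_k$ this makes the restriction a bijection $\mathcal U_{2k+1}(231,312)\to\Int(\mathcal A_k)$. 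Then $|\mathcal U_{2k+1}(231,312)|=|\Int(\mathcal A_k)|=|{\bf D}_k|=C_k$ by \eqref{Eq4}.

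The first step is to rephrase ``$\Lambda=\Lambda'$'' in terms of the matrix $M(\pi)=(m_{ij})$. Recall from Section~\ref{Sec:Stanley} that $\Lambda=UD^{\gamma_1}\cdots UD^{\gamma_k}$ and $\Lambda'=UD^{\gamma_1'}\cdots UD^{\gamma_k'}$, where $\gamma_i$ is the sum of the entries in column $k-i+1$ of $M(\pi)$ and $\gamma_i'$ is the sum of the entries in row $i$; hence $\Lambda=\Lambda'$ iff $\gamma_i=\gamma_i'$ for all $i$, equivalently $\sum_{i=1}^p\gamma_i=\sum_{i=1}^p\gamma_i'$ for all $p$ (both totals equal $k$). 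Using the property $m_{ij}=0$ whenever $j\leq k-i$ and refining the proof of inequality~\eqref{Eq5}, one checks that equality of the $p$-th partial sums fails for some $p$ precisely when $M(\pi)$ has a nonzero entry $m_{ij}$ with $i+j\geq k+2$. Since $m_{ij}=0$ whenever $i+j\leq k$, this yields the clean characterization: $\Lambda=\Lambda'$ if and only if $M(\pi)$ is supported on the antidiagonal $\{(i,j):i+j=k+1\}$.

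It then remains to prove that, for $\pi\in\mathcal U_{2k+1}(312)$, the matrix $M(\pi)$ is supported on the antidiagonal if and only if $\pi$ avoids $231$. Here I would use Lemma~\ref{Lem4}, which identifies $\mathfrak R_0,\ldots,\mathfrak R_k$ as the left-to-right maxima of the plot of $\pi$ (with $\mathfrak R_0=(2k+1,2k+1)$), together with the geometric description of $M(\pi)$. If $\pi$ also avoids $231$, then $\pi$ is a \emph{layered} permutation, i.e.\ a direct sum $\delta_1\oplus\cdots\oplus\delta_{k+1}$ of nonempty decreasing permutations whose tops are exactly $\mathfrak R_k,\ldots,\mathfrak R_0$ (with $\delta_{k+1}$ forced to be the singleton $\{2k+1\}$ since $\pi_{2k+1}=2k+1$); reading off where the non-top points of each layer $\delta_t$ sit relative to the $\mathfrak R_i$'s shows all nonzero cells of $M(\pi)$ lie on the antidiagonal. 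Conversely, if $M(\pi)$ is supported on the antidiagonal, then the points of $\pi$ lying horizontally between two consecutive left-to-right maxima occupy a single vertical strip; since $\pi$ avoids $312$ these points are decreasing (Remark~\ref{Rem1}), and the strips stack up to exhibit $\pi$ as a direct sum of decreasing permutations, which avoids $231$. I expect the main obstacle to be exactly this last step: the underlying combinatorics is elementary, but keeping the indexing of the $\mathfrak R_i$'s, the rows and columns of $M(\pi)$, and the layers of $\pi$ mutually consistent is delicate and prone to off-by-one errors.
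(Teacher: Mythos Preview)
Your proposal is correct and follows essentially the same approach as the paper: both use the fact that $\Av(231,312)$ is the class of layered permutations and then verify that for $\pi\in\mathcal U_{2k+1}(312)$, being layered is equivalent to $\DL_k(\pi)$ lying in $\Int(\mathcal A_k)$. The paper simply asserts this last equivalence is ``straightforward to check,'' whereas you unpack it via the matrix $M(\pi)$ and the antidiagonal characterization; your argument is correct and is a natural way to make that step explicit.
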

\begin{proof}
A permutation is called \emph{layered} if can be written as $\delta_{a_1}\oplus\delta_{a_2}\oplus\cdots\oplus\delta_{a_m}$ for some positive integers $a_1,\ldots,a_m$, where $\delta_a=a(a-1)\cdots 1$ is the decreasing permutation in $S_a$. It is a standard fact among permutation pattern enthusiasts that a permutation $\pi\in S_n$ is layered if and only if it avoids $231$ and $312$. It is straightforward to check that a permutation $\pi\in\mathcal U_{2k+1}(312)$ is layered if and only if $\DL_k(\pi)\in\Int(\mathcal A_k)$ (meaning $\DL_k(\pi)=(\Lambda,\Lambda)$ for some $\Lambda\in{\bf D}_k$). 
\end{proof}

\begin{theorem}\label{Thm7}
For each nonnegative integer $k$, the restriction of $\DL_k\circ\swl$ to $\mathcal U_{2k+1}(132,231)$ is a bijection from $\mathcal U_{2k+1}(132,231)$ to $\Int(\mathcal A_k)$. Hence, $|\mathcal U_{2k+1}(132,231)|=C_k$.
\end{theorem}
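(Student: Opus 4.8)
The plan is to reduce Theorem~\ref{Thm7} to Theorem~\ref{Thm9} by showing that the sliding operators $\swl$ and $\swr$ restrict to a pair of mutually inverse bijections between $\mathcal U_{2k+1}(132,231)$ and $\mathcal U_{2k+1}(231,312)$. Granting this, the restriction of $\DL_k\circ\swl$ to $\mathcal U_{2k+1}(132,231)$ is the composite of the bijection $\swl\colon\mathcal U_{2k+1}(132,231)\to\mathcal U_{2k+1}(231,312)$ with the bijection $\DL_k\colon\mathcal U_{2k+1}(231,312)\to\Int(\mathcal A_k)$ supplied by Theorem~\ref{Thm9}, hence a bijection onto $\Int(\mathcal A_k)$; the cardinality assertion then follows from \eqref{Eq4}. (The case $k=0$ is trivial, so one may assume $k\geq 1$.) Carried out this way, the argument never invokes Lemma~\ref{Lem9}, so it also furnishes the alternative proof of that lemma promised in its footnote.

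First I would verify that $\swl$ sends $\mathcal U_{2k+1}(132,231)$ into $\mathcal U_{2k+1}(231,312)$. Given $\pi\in\mathcal U_{2k+1}(132,231)$, Lemma~\ref{Lem10} gives $\swl(\pi)\in\Av_{2k+1}(231,312)$; by Theorem~\ref{Thm1}, $\pi$ is sorted with $\des(\pi)=k$, so, since $\pi$ avoids $132$, Lemma~\ref{Lem6} shows $\swl(\pi)$ is sorted and Lemma~\ref{Lem1} shows $\des(\swl(\pi))=\des(\pi)=k$. Hence $\swl(\pi)$ is uniquely sorted by Theorem~\ref{Thm1}, i.e.\ $\swl(\pi)\in\mathcal U_{2k+1}(231,312)$.

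For the converse inclusion — the step where the only real work lies — I would take $\sigma\in\mathcal U_{2k+1}(231,312)\subseteq\mathcal U_{2k+1}(312)$, so that $\DL_k(\sigma)$ is defined. Lemma~\ref{Lem10} gives $\swr(\sigma)\in\Av_{2k+1}(132,231)$, and Lemma~\ref{Lem1} (applicable since $\sigma$ avoids $312$) gives $\des(\swr(\sigma))=\des(\sigma)=k$. The one remaining point is that $\swr(\sigma)$ is sorted, and here is the key observation: by Theorem~\ref{Thm9} we have $\DL_k(\sigma)\in\Int(\mathcal A_k)$, i.e.\ $\DL_k(\sigma)=(\Lambda,\Lambda)$ for some $\Lambda\in{\bf D}_k$, and since $\leq_T$ is reflexive this also gives $\DL_k(\sigma)\in\Int(\mathcal L_k^T)$; Proposition~\ref{Prop2} then immediately yields that $\swr(\sigma)$ is sorted, so $\swr(\sigma)\in\mathcal U_{2k+1}(132,231)$ by Theorem~\ref{Thm1}. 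Because $\swl$ and $\swr$ are mutually inverse on $\Av(132,231)$ and $\Av(231,312)$ by Lemma~\ref{Lem10}, the two containments just described show that they restrict to mutually inverse bijections between $\mathcal U_{2k+1}(132,231)$ and $\mathcal U_{2k+1}(231,312)$, which completes the reduction.

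I expect the main obstacle to be exactly the sortedness of $\swr(\sigma)$ in the converse step; the trick that dissolves it is recognizing that every interval of the Catalan antichain $\mathcal A_k$ is a fortiori a Tamari interval (its two coordinates coincide), so that Proposition~\ref{Prop2}, already proved, does all the work. Everything else is bookkeeping with Lemmas~\ref{Lem1}, \ref{Lem6}, and~\ref{Lem10} together with Theorems~\ref{Thm1} and~\ref{Thm9}.
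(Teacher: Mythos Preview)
Your proof is correct, and your forward step (showing $\swl$ maps $\mathcal U_{2k+1}(132,231)$ into $\mathcal U_{2k+1}(231,312)$) is identical to the paper's. The difference lies in how you close the bijection. The paper invokes Lemma~\ref{Lem9} to know in advance that $|\mathcal U_{2k+1}(132,231)|=C_k$, then argues that $\swl$ is an injection into a set of the same size and concludes surjectivity by counting. You instead construct the inverse explicitly: for $\sigma\in\mathcal U_{2k+1}(231,312)$ you observe that $\DL_k(\sigma)\in\Int(\mathcal A_k)\subseteq\Int(\mathcal L_k^T)$ (by Theorem~\ref{Thm9} and reflexivity of $\leq_T$), then apply Proposition~\ref{Prop2} to get $\swr(\sigma)$ sorted, and finish with Lemmas~\ref{Lem1} and~\ref{Lem10}. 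Your route is slightly longer but fully constructive, avoids Lemma~\ref{Lem9}, and in fact realizes exactly the alternative proof the paper's footnote to Lemma~\ref{Lem9} alludes to; the paper's route is shorter because the separate Dyck-path bijection of Lemma~\ref{Lem9} has already done the cardinality work.
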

\begin{proof}
We already know from Lemma \ref{Lem9} that $|\mathcal U_{2k+1}(132,231)|=C_k$. Now suppose $\pi\in$ \linebreak $\mathcal U_{2k+1}(132,231)$. We know by Theorem \ref{Thm1} that $\pi$ is sorted and has $k$ descents. Lemmas \ref{Lem1} and \ref{Lem6} tell us that $\swl(\pi)$ is sorted and has $k$ descents, so it follows from Theorem \ref{Thm1} that $\swl(\pi)$ is uniquely sorted. We know from Lemma \ref{Lem10} that $\swl(\pi)\in\Av(231,312)$. Lemma \ref{Lem10} also tells us that $\swl$ is injective on $\mathcal U_{2k+1}(132,231)$. We have proven that $\swl:\mathcal U_{2k+1}(132,231)\to\mathcal U_{2k+1}(231,312)$ is an injection. It must also be surjective because $|\mathcal U_{2k+1}(132,231)|=|\mathcal U_{2k+1}(231,312)|=C_k$ by Lemma \ref{Lem9} and Theorem \ref{Thm9}. The proof of the theorem now follows from Theorem \ref{Thm9}. 
\end{proof}

\begin{theorem}\label{Thm10}
For each nonnegative integer $k$, the restriction of $\DL_k\circ\swd$ to $\mathcal U_{2k+1}(132,312)$ is a bijection from $\mathcal U_{2k+1}(132,312)$ to $\Int(\mathcal A_k)$. Hence, $|\mathcal U_{2k+1}(132,312)|=C_k$.
\end{theorem}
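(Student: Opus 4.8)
The plan is to follow the template of the proof of Theorem~\ref{Thm7}, replacing the operator $\swl$ by $\swd$ and its inverse $\swr$ by $\swu$. Since Theorem~\ref{Thm9} already tells us that $\DL_k$ restricts to a bijection from $\mathcal U_{2k+1}(231,312)$ onto $\Int(\mathcal A_k)$, it will suffice to prove that $\swd$ restricts to a bijection $\mathcal U_{2k+1}(132,312)\to\mathcal U_{2k+1}(231,312)$; the assertion about $\DL_k\circ\swd$ then follows by composing the two bijections, and the cardinality statement follows from \eqref{Eq4}.

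First I would check that $\swd$ maps $\mathcal U_{2k+1}(132,312)$ into $\mathcal U_{2k+1}(231,312)$. Given $\pi\in\mathcal U_{2k+1}(132,312)$, Theorem~\ref{Thm1} says $\pi$ is sorted and has exactly $k$ descents. Lemma~\ref{Lem6} then gives that $\swd(\pi)$ is sorted, and Lemma~\ref{Lem1} gives $\des(\swd(\pi))=\des(\pi)=k$, so Theorem~\ref{Thm1} shows $\swd(\pi)$ is uniquely sorted. Lemma~\ref{Lem10} shows $\swd(\pi)\in\Av(231,312)$, and $\swd$ preserves the length of a permutation, so $\swd(\pi)\in\mathcal U_{2k+1}(231,312)$. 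Running the same argument with $\swu$ in place of $\swd$ (again invoking Lemmas~\ref{Lem6}, \ref{Lem1}, and \ref{Lem10}) shows that $\swu$ maps $\mathcal U_{2k+1}(231,312)$ into $\mathcal U_{2k+1}(132,312)$. Because Lemma~\ref{Lem10} already tells us that $\swd$ and $\swu$ are mutually inverse bijections between $\Av(132,312)$ and $\Av(231,312)$, their restrictions are mutually inverse bijections between $\mathcal U_{2k+1}(132,312)$ and $\mathcal U_{2k+1}(231,312)$. In particular $|\mathcal U_{2k+1}(132,312)|=|\mathcal U_{2k+1}(231,312)|=C_k$ by Theorem~\ref{Thm9} and \eqref{Eq4}.

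Finally, composing this bijection with the bijection of Theorem~\ref{Thm9} shows that $\DL_k\circ\swd\colon\mathcal U_{2k+1}(132,312)\to\Int(\mathcal A_k)$ is a bijection, which is exactly the claim.

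I do not expect a genuine obstacle: every ingredient (Theorems~\ref{Thm1} and \ref{Thm9} and Lemmas~\ref{Lem1}, \ref{Lem6}, \ref{Lem10}) is already available, and the proof is a near-verbatim adaptation of that of Theorem~\ref{Thm7}. The one place that deserves a second look is the ``sorted'' part of the argument: for $\swl$ in Lemma~\ref{Lem6} one needed the extra hypothesis that $\pi$ avoids $132$, whereas here we are using $\swd$ and $\swu$, for which Lemma~\ref{Lem6} asserts unconditionally that sortedness is preserved, so no such hypothesis is needed and the argument closes cleanly.
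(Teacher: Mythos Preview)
Your argument is correct and is essentially the same as the paper's. The only difference is packaging: the paper cites Theorem~\ref{Thm3} (which already shows $\swd:\mathcal U_{2k+1}(132)\to\mathcal U_{2k+1}(231)$ is a bijection) together with Lemma~\ref{Lem10}, and then intersects the two bijections to conclude that $\swd:\mathcal U_{2k+1}(132,312)\to\mathcal U_{2k+1}(231,312)$ is a bijection, whereas you unpack the proof of Theorem~\ref{Thm3} and redo the ``sortedness and descent count are preserved'' step directly via Theorem~\ref{Thm1} and Lemmas~\ref{Lem1}, \ref{Lem6}. Either way one then composes with Theorem~\ref{Thm9}.
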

\begin{proof}
Theorem \ref{Thm3} and Lemma \ref{Lem10} tell us that $\swd:\mathcal U_{2k+1}(132)\to\mathcal U_{2k+1}(231)$ and $\swd:\Av(132,312)\to\Av(231,312)$ are bijections. It follows that $\swd:\mathcal U_{2k+1}(132,312)\to\mathcal U_{2k+1}(231,312)$ is a bijection, so the proof of the theorem follows from Theorem \ref{Thm9}. 
\end{proof}

\section{Concluding Remarks}\label{Sec:Conclusion}

One of our primary focuses in this paper has been the enumeration of sets of the form \linebreak $\mathcal U_{2k+1}(\tau^{(1)},\ldots,\tau^{(r)})$. We can actually complete this enumeration for all possible cases in which the patterns $\tau^{(1)},\ldots,\tau^{(r)}$ are of length $3$. It is easy to check that $\mathcal U_{2k+1}(123)$ and $\mathcal U_{2k+1}(213)$ are empty when $k\geq 2$, so we only need to focus on the cases in which $\{\tau^{(1)},\ldots,\tau^{(r)}\}\subseteq\{132,231,312,321\}$. When $r=0$ (so we consider the set $\mathcal U_{2k+1}$), the enumeration is completed in \cite{DefantEngenMiller} and is given by Lassalle's sequence. The cases in which $r=1$ are handled in Corollary \ref{Cor1}, Corollary \ref{Cor2}, and Theorem \ref{Thm8}. Three of the six cases in which $r=2$ are handled in Theorems \ref{Thm9}, \ref{Thm7}, and \ref{Thm10}. In the following theorem, we finish the other three cases in which $r=2$ along with all of the cases in which $r=3$ or $r=4$. 

\begin{theorem}\label{Thm11}
For each nonnegative integer $k$, we have \[|\mathcal U_{2k+1}(231,321)|=|\mathcal U_{2k+1}(312,321)|=|\mathcal U_{2k+1}(231,312,321)|=|\mathcal U_{2k+1}(132,231,312)|=1.\] For each $k\geq 2$, we have $\mathcal U_{2k+1}(132,321)=\emptyset$.
\end{theorem}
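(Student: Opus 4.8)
The plan is to dispatch the emptiness statement by a one-line descent count and to treat each of the four ``$=1$'' statements by exhibiting the unique permutation and then eliminating all other candidates using the structure of the relevant avoidance class.

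For the emptiness statement, the key observation is that \emph{every} permutation with at least two descents contains $132$ or $321$. Indeed, if $i<j$ are descents: when $j=i+1$ the entries $\pi_i>\pi_{i+1}>\pi_{i+2}$ form a $321$ pattern; when $j>i+1$, comparing $\pi_{i+1}$ with $\pi_{j+1}$ produces either a $132$ pattern (from $\pi_{i+1},\pi_j,\pi_{j+1}$ when $\pi_{i+1}<\pi_{j+1}$) or a $321$ pattern (from $\pi_i,\pi_{i+1},\pi_{j+1}$ when $\pi_{i+1}>\pi_{j+1}$). Hence every element of $\Av(132,321)$ has at most one descent, whereas by Theorem~\ref{Thm1} a uniquely sorted permutation of length $2k+1$ has exactly $k$ descents; so $\mathcal U_{2k+1}(132,321)=\emptyset$ when $k\geq 2$.

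For the ``$=1$'' statements I would first check directly that the following permutations are uniquely sorted and avoid the required patterns. For $\mathcal U_{2k+1}(231,321)$, $\mathcal U_{2k+1}(312,321)$, and $\mathcal U_{2k+1}(231,312,321)$, take $\pi=\delta_2\oplus\cdots\oplus\delta_2\oplus\delta_1$ ($k$ copies of $\delta_2$), i.e., $\pi=2\,1\,4\,3\,\cdots\,(2k)\,(2k-1)\,(2k+1)$; for $\mathcal U_{2k+1}(132,231,312)$, take $\pi=\delta_{k+1}\oplus(1\,2\,\cdots\,k)$, i.e., $\pi=(k+1)\,k\,\cdots\,1\,(k+2)\,(k+3)\,\cdots\,(2k+1)$. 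In each case pattern avoidance and the descent count $=k$ are immediate, and a canonical hook configuration can be written down explicitly (for the first permutation: hooks $(2i-1,2i)\to(2i+1,2i+2)$ for $i<k$, together with $(2k-1,2k)\to(2k+1,2k+1)$; for the second: hooks $(i,k+2-i)\to(2k+2-i,2k+2-i)$), so Proposition~\ref{Prop1} and Theorem~\ref{Thm1} give unique sortedness.

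Uniqueness is the substantive part, and I would use two facts about a uniquely sorted $\pi\in S_{2k+1}$: it has exactly $k$ descents (Theorem~\ref{Thm1}), and $\pi_{2k+1}=2k+1$ (a sorted permutation lies in the image of $s$, which always places the largest entry last). For $\mathcal U_{2k+1}(231,312,321)$: avoiding $231$ and $312$ makes $\pi$ layered (the fact recorded in the proof of Theorem~\ref{Thm9}), avoiding $321$ forces every layer to have size $\leq 2$, the descent count then forces exactly $k$ layers of size $2$ and one of size $1$, and $\pi_{2k+1}=2k+1$ forces the size-$1$ layer to be last, so $\pi=\delta_2^{\oplus k}\oplus\delta_1$. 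For $\mathcal U_{2k+1}(132,231,312)$: a layered permutation avoids $132$ iff only its first layer can have size $>1$, so $\pi=\delta_a\oplus(1\,2\,\cdots\,(2k+1-a))$, and the descent count gives $a=k+1$. For $\mathcal U_{2k+1}(231,321)$ and $\mathcal U_{2k+1}(312,321)$ the permutations need not be layered, so instead I would record the (standard) block decompositions obtained by locating the largest entry (for $\Av(231,321)$: an $\oplus$-sum of blocks of the shape ``a maximum followed by an increasing run'') or the smallest entry (for $\Av(312,321)$: an $\oplus$-sum of blocks of the shape ``an increasing run followed by its minimum''); in both, a block contributes a descent iff it has size $\geq 2$, so the descent count pins down the multiset of block sizes, and $\pi_{2k+1}=2k+1$ rules out a last block of size $\geq 2$, forcing $\pi=\delta_2^{\oplus k}\oplus\delta_1$ again. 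The main obstacle is simply carrying out these four structural analyses cleanly; each step is elementary, but one must verify in every case that ``$\pi$ ends in $2k+1$'' is exactly the constraint that removes the last degree of freedom.
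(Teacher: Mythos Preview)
Your argument is correct and essentially complete; the only imprecision is the sentence ``the descent count pins down the multiset of block sizes'' in the $\Av(231,321)$ and $\Av(312,321)$ cases. Strictly speaking, $k$ descents and length $2k+1$ leave two multisets---$\{2^k,1\}$ and $\{3,2^{k-1}\}$---and it is the sortedness condition $\pi_{2k+1}=2k+1$ (forcing the last block to have size $1$, hence $b\geq 1$) that eliminates the second. You clearly have this in mind, since you invoke $\pi_{2k+1}=2k+1$ immediately afterward; just reorder the two steps in the write-up.

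Your route differs from the paper's. The paper handles all four $321$-cases in one stroke by quoting the structural result established inside the proof of Theorem~\ref{Thm8}: every $\pi\in\mathcal U_{2k+1}(321)$ has descent set exactly $\{1,3,\ldots,2k-1\}$ and satisfies $\pi_1<\pi_3<\cdots<\pi_{2k+1}$. From there, the paper asserts that avoiding any one of $231$, $312$ forces $\pi=2\,1\,4\,3\cdots(2k+1)$, and that avoiding $132$ is impossible for $k\ge 2$ (your two-descent lemma is exactly the clean version of that last implication). You instead treat each avoidance class on its own terms via its standard $\oplus$-block decomposition, never invoking Theorem~\ref{Thm8}. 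The trade-off: the paper's argument is shorter because the heavy lifting was already done in Theorem~\ref{Thm8}, while yours is self-contained and would survive even if Theorem~\ref{Thm8} were removed from the paper. For $\mathcal U_{2k+1}(132,231,312)$ the two arguments coincide.
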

\begin{proof}
We may assume $k\geq 2$. The proof of Theorem \ref{Thm8} shows that if $\pi=\pi_1\cdots\pi_{2k+1}\in\mathcal U_{2k+1}(321)$, then $\pi_1<\pi_3<\cdots<\pi_{2k+1}$, and the descents of $\pi$ are $1,3,5,\ldots,2k-1$. It easily follows that \[\mathcal U_{2k+1}(231,321)=\mathcal U_{2k+1}(312,321)=\mathcal U_{2k+1}(231,312,321)=\{214365\cdots(2k)(2k-1)(2k+1)\}\] and that $\mathcal U_{2k+1}(132,321)=\emptyset$. Every element of $\Av(132,231,312)$ is of the form $L\oplus R$, where $L$ is a decreasing permutation and $R$ is an increasing permutation. A uniquely sorted permutation of length $2k+1$ must have $k$ descents, so \[\mathcal U_{2k+1}(132,231,312)=\{(k+1)k\cdots 1(k+2)(k+3)\cdots(2k+1)\}. \qedhere\] 
\end{proof}

Theorem \ref{Thm11} implies that \[\mathcal U_{2k+1}(132,231,321)=\mathcal U_{2k+1}(132,312,321)=\mathcal U_{2k+1}(132,231,312,321)=\emptyset,\] so we have completed the enumeration of $\mathcal U_{2k+1}(\tau^{(1)},\ldots,\tau^{(r)})$ when $\tau^{(1)},\ldots,\tau^{(r)}$ are of length $3$. Since we enumerated $\mathcal U_{2k+1}(312,1342)$ in Corollary \ref{Cor3} and enumerated $\mathcal U_{2k+1}(231,4132)$ in Theorem \ref{Thm6}, it is natural to look at other sets of the form $\mathcal U_{2k+1}(\tau^{(1)},\tau^{(2)})$ with $\tau^{(1)}\in S_3$ and $\tau^{(2)}\in S_4$. To this end, we have eighteen conjectures. Each row of the following table represents the conjecture that the class of (normalized) uniquely sorted permutations (of odd length) avoiding the given patterns is counted by the corresponding OEIS sequence.

{\tabulinesep=1.2mm
\begin{table}[h]

\begin{tabu}{|c|[1.5pt]c|}

    \hline Patterns & Sequence \\\tabucline[2.5pt]{-}
    ${\dagger}$ $312,1432$ &  \\
    ${\dagger}$ $312,2431$ &  \\
    ${\dagger}$ $312,3421$ & A001764 \\
    ${\dagger}$ $132,3412$ &  \\
    ${\dagger}$ $231,1423$ &  \\\hline
    \hphantom{$\dagger$} $312,1243$ &  A122368 \\\hline
\end{tabu}
\quad
\begin{tabu}{|c|[1.5pt]c|}

    \hline Patterns & Sequence \\\tabucline[2.5pt]{-}
    ${\dagger}$ $132,3421$ &  \\
    ${\dagger}$ $132,4312$ & A001700 \\
    \hphantom{$\dagger$} $231,1243$ &  \\\hline \noalign{\vskip -.08cm}  
    \makecell{\,\,\hspace{.3cm}$132,2341$ \vspace{.05cm}\\ \hspace{.4cm}$132,4123$ \hspace{-.3cm}} & \hspace{.13cm}A109081 \vspace{-.155cm} \\\hline
    \hphantom{$\dagger$} $312,2341$ & A006605 \\\hline
\end{tabu}
\quad
\begin{tabu}{|c|[1.5pt]c|}

    \hline Patterns & Sequence \\\tabucline[2.5pt]{-}
    \hphantom{$\dagger$} $312,3241$ & A279569 \\\hline
    \hphantom{$\dagger$} $312,4321$ & A063020 \\\hline
    \hphantom{$\dagger$} $132,4231$ & A071725 \\\hline
    ${\dagger}$ $231,1432$ & A001003 \\\hline
    ${\dagger}$ $231,4312$ & A127632 \\\hline
    \hphantom{$\dagger$} $231,4321$ & A056010 \\\hline
\end{tabu}\vspace{.3cm}
\caption{Conjectural OEIS sequences enumerating some sets of the form $\mathcal U_{2k+1}(\tau^{(1)},\tau^{(2)})$. Mularczyk recently proved the 9 conjectures that are marked with the symbol $\dagger$.}\label{Tab1}
\end{table}}

Note that the OEIS sequences A001764 and A127632 give the numbers appearing in \eqref{Eq3} and \eqref{Eq15}, respectively. A couple of especially well-known sequences appearing in Table \ref{Tab1} are A001700, which consists of the binomial coefficients ${2k-1\choose k}$, and A001003, which consists of the little Schr\"oder numbers. In the time since the original preprint of this article was released online, Hanna Mularczyk \cite{Hanna} proved half of these 18 conjectures. She used a mixture of generating function arguments and interesting bijections that link pattern-avoiding uniquely sorted permutations with Dyck paths, ${\bf S}$-Motzkin paths, and Schr\"oder paths. We have marked the conjectures that she settled with the symbol $\dagger$ in Table \ref{Tab1}.

We have also calculated the first few values of $|\mathcal U_{2k+1}(231,4123)|$; beginning at $k=0$, they are $1,1,3,10,36,138,553,2288,9699,41908$. This sequence appears to be new, so we have added it as sequence A307346 in the OEIS. We have also computed the first few terms in each of the $24$ sequences $(|\mathcal U_{2k+1}(\tau)|)_{k\geq 0}$ for $\tau\in S_4$; most of these sequences appear to be new. 

\section{Acknowledgments}
The author thanks Niven Achenjang, who helped with the production of data that led to some of the conjectures in Section \ref{Sec:Conclusion}. He also thanks William Kuszmaul, who used his recently-developed fast algorithm \cite{Kuszmaul} for generating permutations avoiding certain patterns. This produced data that was used in formulating two of the conjectures in Section \ref{Sec:Conclusion}. The author is gracious to the two referees whose helpful comments vastly improved the presentation of this article. The author was supported by a Fannie and John Hertz Foundation Fellowship and an NSF Graduate Research Fellowship.

\end{document}